\documentclass[11pt,times,letter]{article}
\usepackage{amsmath, amsfonts,amsthm}
\usepackage{dsfont}
\usepackage{bbm}

\usepackage[english]{babel}
\usepackage[normalem]{ulem}

\usepackage{latexsym,amssymb,graphicx}
\usepackage{verbatim}
\usepackage{mathrsfs}
\usepackage{epsfig}

\usepackage{xcolor}
\usepackage{tikz}
\usepackage{pgfplots}
\usepgfplotslibrary{fillbetween}

\usepackage{url}
\usepackage{bm}
\usepackage[numbers]{natbib}

\usepackage[colorlinks,linkcolor=blue,citecolor=blue,anchorcolor=blue]{hyperref}

\usepackage{comment}

\newtheorem{theorem}{Theorem}[section]

\newtheorem{lemma}[theorem]{Lemma}
\newtheorem{proposition}[theorem]{Proposition}

\newtheorem{definition}[theorem]{Definition}
\newtheorem{remark}[theorem]{Remark}

\numberwithin{equation}{section}

\definecolor{Red}{rgb}{1.00, 0.00, 0.00}

\definecolor{DRed}{rgb}{0.5, 0.00, 0.00}

\definecolor{Blue}{rgb}{0.00, 0.00, 1.00}

\definecolor{Green}{rgb}{0.0, 0.4, 0.0}

\definecolor{Gray}{rgb}{0.7421875,0.7421875,0.7421875}

\definecolor{mycolor1}{rgb}{0.00000,0.44700,0.74100}%
\definecolor{mycolor2}{rgb}{0.85000,0.32500,0.09800}%
\definecolor{mycolor3}{rgb}{0.92900,0.69400,0.12500}%
\definecolor{mycolor4}{rgb}{0.49400,0.18400,0.55600}%
\definecolor{mycolor5}{rgb}{0.46600,0.67400,0.18800}%
\definecolor{mycolor6}{rgb}{0.00000,1.00000,1.00000}%

\allowdisplaybreaks[4]
\pgfplotsset{compat=newest}

\def\beq{\begin{eqnarray}}
\def\eeq{\end{eqnarray}}
\def\be*{\begin{eqnarray*}}
\def\ee*{\end{eqnarray*}}

\def \R{\mathbb{R}}

\def \eps {\varepsilon}

\def\R{\mathbb{R}}

\def\namedlabel#1#2{\begingroup
	#2%
	\def\@currentlabel{#2}%
	\phantomsection\label{#1}\endgroup
}

\def \1 {{\mathbf 1}}

\title{\Large \bf
Kyle meets time-inconsistency: a dynamic mean--variance informed trading problem
}
\date{\vspace{-5ex}}

\author{Shuoqing Deng
\thanks{
{\small The Hong Kong University of Science and Technology, Department of Mathematics (masdeng@ust.hk).  }}%
\and 
Zhichao Luo
\thanks{
{\small 
The Hong Kong University of Science and Technology, Department of Mathematics (zluoav@connect.ust.hk). }}%}
\and 
Zhenhua Wang
\thanks{
{\small 
Shandong University, Zhongtai Securities Institute for Financial Studies (zhenhuaw@sdu.edu.cn). 
}}%
}

\begin{document}

% \begin{center}
% {\Large Stabilization of Jump-Diffusion stochastic differential equation by hysteresis switching} 
% \end{center}
% \begin{center}
% {\large \today}\\[3cm] 
% \end{center}

\maketitle

\begin{abstract}

We study a Kyle--Back model in which the informed trader has dynamic mean--variance preferences, leading to a time-inconsistent trading problem that is formulated as an intrapersonal game. The model combines two equilibrium requirements: a trading--pricing equilibrium between the informed trader and the market maker, and a time-consistent equilibrium among the trader's successive selves. Following the framework of \cite{Cho}, we consider both risk-neutral and risk-averse informed traders. In the risk-neutral case, we derive the equilibrium trading strategy and price impact explicitly. In the risk-averse case, we establish existence and characterize the equilibrium through a coupled system of nonlinear ordinary differential equations. Our main technique is to reduce the equilibrium conditions to a forward--backward ODE system and resolve the resulting boundary conditions by a shooting argument. Economically, the mean--variance preference reshapes the intertemporal trade-off between exploiting current private information and preserving future informational advantage, generating a declining term structure of price impact and shifting both information revelation and informed-trading profits toward earlier stages of the trading horizon.

\vspace{0.6 cm}

\noindent{\textbf{Mathematics Subject Classification (2020)}: 60H30, 60K35, 91A13, 91A23, 91B30. }

\vspace{0.2 cm}
%\noindent{\textbf{JEL Classification}: D71, D81, G11, C62}
%\vspace{0.2 cm}

\noindent{\textbf{Keywords}: Kyle--Back model, informed trading, time-inconsistency, equilibria, shooting method}

\end{abstract}

\section{Introduction}
%forget the strong equilibrium; forget the fixed point argument;
%Informed trading problem
The Kyle \cite{Kyle85} and Back \cite{Back92} model studies the equilibrium of trading--pricing with asymmetric information. In this model, there are three types of agents in the market: an informed trader, a market maker, and noise traders. A risky asset is traded in the market, whose intrinsic value is represented by a random variable $V$ and is privately observed only by the informed trader. The noise traders submit random orders. The market maker observes only the aggregate order flow, without being able to distinguish informed orders from noise trades, and uses this information to determine the asset price. In equilibrium, the informed trader chooses her trading strategy optimally given the pricing rule, while the market maker sets prices rationally based on the observed aggregate order flow.

The Kyle--Back model has been an important topic in market microstructure and has been extensively studied in the finance and mathematical finance literature; see, among others, \cite{ABO12,  BCG00, BCFLL18, BHMO12,  CalSta10, CampiCetin07, CampiCetinDan11, Cetin18, CetinDan16, CetinDan21, CKL23, CF16, HHL01, KX22}. In \cite{Cho}, the equilibrium is derived explicitly for both risk-neutral and risk-averse informed traders. In the risk-neutral case, the price impact is constant, whereas in the risk-averse case, it decreases over time and the price depends on the entire path of the order flow. In a series of papers \cite{BCEL21, BoseEkren21a, BoseEkren21b, CENV23, EkrenMZ}, Ekren and his coauthors developed a systematic approach to Kyle--Back models based on Brenier's theorem from optimal transport (OT). Using this approach, \cite{BCEL21} allows for a risk-averse market maker, while \cite{BoseEkren21a, BoseEkren21b} establish the existence of equilibrium in a general framework with a risk-averse informed trader and non-Gaussian beliefs. 
More recent developments include the FBSDE approach of \cite{QiaoZhang}, which characterizes continuous-time Kyle--Back equilibria beyond the usual Markovian or bridge-type constructions, and the model of \cite{MaXiaZhang2025}, which incorporates legal penalties and a large population of liquidity traders, thereby capturing the insider's trade-off between profitability and camouflage.

Our model combines the Kyle--Back informed-trading framework with a dynamic mean--variance preference. The latter introduces time inconsistency through the conditional variance term: a strategy preferred by the informed trader at the current time may no longer remain preferred when the problem is reconsidered at a later date. A standard approach to such problems is therefore to abandon global optimality and seek a time-consistent equilibrium among the decision maker's successive selves; for the development of this equilibrium approach to time-inconsistent stochastic control, see \cite{Strotz1955, BjorkKhapkoMurgoci2017, HuangZhou2021, HeJiang2021}, and the references therein. Mean--variance problems constitute one of the most important classes of applications of this framework. \cite{BasakChabakauri2010} derives an analytical time-consistent mean--variance portfolio strategy, while \cite{BjorkMurgociZhou2014} extends the analysis to state-dependent risk preferences. The effects of proportional transaction costs on equilibrium mean--variance portfolio choice are studied in \cite{DJJ24}. Different from these portfolio-selection problems, in the present paper the time-inconsistent mean--variance decision problem is embedded in a Kyle--Back equilibrium, where the informed trader's strategy simultaneously affects the information revealed to the market maker and hence the endogenous price process.

A particularly relevant comparison is with the emerging literature on time-inconsistent stochastic games, where equilibrium may involve two intertwined levels. In such problems, a player interacts not only with her own future selves at the intrapersonal level, but also with other players at the interpersonal level; examples include \cite{HuangZhou2022,LeiPun2024, ZhangZhou2025, Zhou2026}. This two-level structure is especially transparent in time-inconsistent mean-field games. 
\cite{LiangZhang2024} studies time-inconsistent mean-field and $n$-agent portfolio games under relative-performance criteria and explicitly constructs deterministic equilibria. 
\cite{BayraktarWang2025} investigates time-inconsistent mean-field games of Markov decision processes, distinguishes equilibrium notions according to the dependence of strategies on the population distribution, and establishes the existence of classical equilibria. 
More recently, \cite{BayraktarWangYuZhang2026} establishes the existence of relaxed equilibria for general time-inconsistent mean-field games via vanishing entropy regularization and also analyzes policy iteration for the regularized problem. 
In all these models, the mean-field equilibrium combines an intrapersonal equilibrium for the representative agent with an additional mean-field consistency requirement. Our Kyle--Back model exhibits an analogous but fundamentally different two-level structure. The first level is again an intrapersonal equilibrium among the informed trader's successive selves, generated here by the mean--variance criterion. The second level, however, is not an interpersonal game or a population fixed-point problem: it is the Kyle--Back rational-pricing condition, under which the market maker filters the fundamental value from the aggregate order flow generated by the trader's equilibrium strategy. Thus, the trader's extended HJB system must be coupled with an endogenous filtering problem so that the conjectured pricing rule is consistent with the information generated in equilibrium. To the best of our knowledge, the present paper is the first to introduce an explicitly time-inconsistent mean--variance preference into the Kyle--Back framework and to solve this coupled intrapersonal-equilibrium/rational-filtering problem. This coupling is also the source of the forward--backward structure and the shooting problem developed below.

%Risk-neutral case serves as a benchmark for the risk-averse case

Our main contribution are two folds.
The first is to identify and solve the two-level equilibrium structure generated by time inconsistency in the Kyle--Back model. The informed trader must satisfy an intrapersonal equilibrium condition across her successive selves, while her strategy must simultaneously be consistent with the market maker's rational filtering and pricing rule. This produces a coupling that is different from both standard time-inconsistent control and the mean-field consistency mechanisms appearing in time-inconsistent mean-field games.

The second contribution is technical. Under a Gaussian--linear ansatz, the coupled equilibrium conditions reduce to a forward--backward ODE system with singular terminal behavior, which we solve by a shooting construction. This yields an explicit solution without CARA risk aversion and, in the risk-averse case, a global characterization through a uniquely determined shooting parameter; see, e.g., \cite{Keller1971, RobertsShipman1972} for the classical shooting philosophy.

Our equilibrium characterization also yields several economic implications. The informed trader faces a dynamic trade-off between exploiting the current pricing bias and preserving her informational advantage for future trading. Although this trade-off already exists in the classical Kyle--Back model, CARA risk aversion and the mean--variance preference change its intertemporal balance and generate a declining term structure of equilibrium price impact. 
Compared with the classical risk-neutral case, both preferences increase price impact early in the trading horizon and reduce it near maturity, making the market shallower initially and deeper later. Since the rate of information revelation is directly linked to price impact through the evolution of the conditional variance, this front-loading of price impact leads to earlier revelation of private information and an earlier realization of informed-trading profits. The mean--variance component has an additional interpretation: because successive selves evaluate conditional wealth variance differently, the current informed trader must anticipate future re-optimization when deciding how rapidly to exploit and reveal her private information. These implications and their sensitivity to the preference parameters are examined quantitatively in Section~\ref{sec:financial}.

\textbf{Organization of the paper.}
Section~\ref{sec:model} introduces the market model, formulates the informed trader's time-inconsistent problem and the market maker's filtering and pricing problem, and defines the equilibrium. Section~\ref{sec:main} derives the equilibrium conditions and presents our main results. Section~\ref{sec:financial} discusses the economic implications of the equilibrium, with particular emphasis on price impact, information revelation, and the effects of risk preferences. Section~\ref{sec:proof} contains the proofs of the main results.

\section{Market model and problem formulation}
\label{sec:model}

We first introduce an informed-trading model similar to that of \cite{Cho}. Consider a continuous-time market over a finite horizon $[0,T]$, with $T>0$. The market consists of a market maker, an informed trader, and noise traders. The market maker sets the price, the informed trader trades using private information, and the noise traders trades randomly. There is one risky asset in the market, and the risk-free interest rate is normalized to zero. At time $T$, the risky asset pays a random value $V$, which is known to the informed trader from time $0$ but not observable to the market maker.

Let $(\Omega,\mathcal{F},\mathbb{P})$ be a probability space supporting $V$ and a standard Brownian motion $B$ independent of $V$. We assume that $V\sim\mathcal{N}(\mu,\Sigma_0)$, where $\Sigma_0>0$. The cumulative order submitted by the noise traders is modeled as $Z_t=\sigma B_t$, where $\sigma>0$. The informed trader submits orders continuously, with $\alpha_t$ denoting the trading rate. Consequently, the cumulative informed order is $X_t=\int_0^t\alpha_s\,\mathrm{d}s$. The aggregate order flow is $Y_t:=X_t+Z_t$, and hence
\begin{equation}
    \mathrm{d}Y_t
    =
    \alpha_t\,\mathrm{d}t
    +
    \sigma\,\mathrm{d}B_t,
    \qquad
    Y_0=0.
    \label{eq:aggregate_order}
\end{equation}
The market maker observes $Y_t$, but cannot distinguish between $X_t$ and $Z_t$. Thus, the informed order is observed only through the noise generated by $Z$. Let $\mathbb{F}^Y=(\mathcal{F}^Y_t)_{0\leq t\leq T}$ denote the filtration generated by $Y$, which represents the information available to the market maker. The informed trader additionally observes $V$, hence the informed trader's filtration is $\mathcal{F}^I_t=\mathcal{F}^Y_t\vee\sigma(V)$, where $\sigma(V)$ denotes the $\sigma$-algebra generated by $V$.

We consider pricing rules of the form
\begin{equation}
    P_t=H(t,\xi_t),
    \qquad
    \xi_t=\int_0^t\lambda(s)\,\mathrm{d}Y_s,
    \label{eq:pricing_rule}
\end{equation}
where $H:[0,T]\times\mathbb{R}\to\mathbb{R}$ and $\lambda:[0,T)\to(0,\infty)$ are deterministic. The process $\xi$ summarizes the information extracted from the aggregate order flow, while $\lambda$ measures the instantaneous price pressure. We assume that $H$ is continuous on $[0,T]\times\mathbb{R}$, belongs to $C^{1,2}([0,T)\times\mathbb{R})$, and is strictly increasing in its second argument. The pair $(H,\lambda)$ is called a pricing rule.

\subsection{A risk-averse and time-inconsistent informed trader}
\label{subsec:insider_problem}

Fix a pricing rule $(H,\lambda)$ and condition on $V=v$, where $v$ denotes a realization of $V$. Starting from time $t$ with current wealth $w$, the terminal wealth generated by a trading strategy $\alpha$ is
\[
    W_T^\alpha
    =
    w+\int_t^T
    \bigl(v-H(s,\xi_s^\alpha)\bigr)\alpha_s\,\mathrm{d}s.
\]
For a random variable $\mathcal{X}$ with the required conditional exponential moment, define
\begin{equation}
    \operatorname{CE}_{\rho,t,\xi,w}(\mathcal{X})
    :=
    \begin{cases}
        -\dfrac{1}{\rho}
        \log
        \mathbb{E}_{t,\xi,w}
        \bigl[e^{-\rho\mathcal{X}}\bigr],
        & \rho>0,\\[0.8em]
        \mathbb{E}_{t,\xi,w}[\mathcal{X}],
        & \rho=0.
    \end{cases}
    \label{eq:conditional_certainty_equivalent}
\end{equation}
At time $t$, the informed trader evaluates the strategy through the criterion
\begin{equation}
    J(t,\xi,w;v,\alpha)
    =
    \operatorname{CE}_{\rho,t,\xi,w}
    \bigl(W_T^\alpha\bigr)
    -
    \frac{\eta}{2}
    \operatorname{Var}_{t,\xi,w}
    \bigl(W_T^\alpha\bigr),
    \qquad
    \rho,\eta\geq0.
    \label{eq:risk_averse_objective}
\end{equation}
For $\rho>0$, the first term is the conditional certainty equivalent associated with the constant absolute risk aversion (CARA) exponential utility $\mathscr{U}_\rho(x)=-e^{-\rho x}$. Maximizing this certainty is equivalent to maximizing conditional expected exponential utility, and $\rho$ measures the degree of risk aversion. Its translation invariance will allow current wealth to be separated from the continuation problem in Subsection~\ref{subsec:extended_hjb}.

The second term in \eqref{eq:risk_averse_objective} introduces an additional penalty on conditional wealth variance. When $\eta>0$, this term makes the criterion time-inconsistent and therefore prevents the direct application of the usual dynamic programming principle: a strategy preferred at one time may cease to be preferred at a later time. By following \cite{BjorkKhapkoMurgoci2017}, we formulate the problem as an intrapersonal game among the trader's successive selves.

\begin{remark}[Benchmark cases]
\label{rem:benchmark_cases}
The criterion \eqref{eq:risk_averse_objective} contains two important benchmark models.
\begin{enumerate}
    \item[\textnormal{(i)}]
    If $\eta=0$ and $\rho>0$, maximizing \eqref{eq:risk_averse_objective} is equivalent to maximizing conditional expected exponential utility. The informed trader's problem then reduces to the risk-averse Kyle--Back model studied by \cite{Cho}.

    \item[\textnormal{(ii)}]
    As $\rho\downarrow0$, a cumulant expansion yields
    \[
        -\frac{1}{\rho}
        \log
        \mathbb{E}_{t,\xi,w}
        \left[
            e^{-\rho W_T^\alpha}
        \right]
        =
        \mathbb{E}_{t,\xi,w}
        \bigl[W_T^\alpha\bigr]
        -
        \frac{\rho}{2}
        \operatorname{Var}_{t,\xi,w}
        \bigl(W_T^\alpha\bigr)
        +
        o(\rho),
    \]
    whenever the required exponential moments exist. This is consistent with the definition of $\operatorname{CE}_{\rho,t,\xi,w}$ at $\rho=0$. In particular, when $\rho=0$ and $\eta>0$, the criterion becomes
    \[
        \mathbb{E}_{t,\xi,w}
        \bigl[W_T^\alpha\bigr]
        -
        \frac{\eta}{2}
        \operatorname{Var}_{t,\xi,w}
        \bigl(W_T^\alpha\bigr),
    \]
    which is the time-inconsistent mean--variance benchmark considered separately below. With a slight abuse of terminology, we will  refer to this as the risk-neutral case. Here, ``risk-neutral'' refers only to the absence of CARA risk aversion; the informed trader remains averse to conditional wealth variance through the mean--variance term.
\end{enumerate}
The case $\rho>0$ and $\eta>0$, in which CARA risk aversion and time inconsistency are both present, is the main focus of this paper.
\end{remark}

We next introduce the admissible class of trading strategies. For a given pair $$(H,\lambda)$$, a trading strategy $\alpha$ is said to be admissible if it is $\mathbb{F}^I$-progressively measurable and locally square-integrable on $[0,T)$, and if
$$
\int_0^T |\alpha_t|,\mathrm{d}t<\infty\;\text{a.s.},\quad \mathbb{E}\big[|W_T^\alpha|^2\big]<\infty.
$$
When $\rho>0$, we additionally require
$
\mathbb{E}\big[e^{-\rho W_T^\alpha}\big]<\infty,
$
so that the conditional certainty equivalent in \eqref{eq:conditional_certainty_equivalent} is well defined.
%We next specify the admissibility requirements. A trading strategy $\alpha$ is admissible for $(H,\lambda)$ if it is $\mathbb{F}^I$-progressively measurable, locally square-integrable on $[0,T)$, and satisfies $\int_0^T|\alpha_t|\,\mathrm{d}t<\infty$ almost surely and $\mathbb{E}[|W_T^\alpha|^2]<\infty$, together with $\mathbb{E}[e^{-\rho W_T^\alpha}]<\infty$ when $\rho>0$. 
We denote the resulting admissible set by $\mathcal{A}(H,\lambda)$. In addition, $\mathcal{A}(H,\lambda)$ is assumed to be stable under the bounded constant pasting deviations used in the equilibrium definition below.
%The admissible class, denoted by $\mathcal{A}(H,\lambda)$, is required to be stable under the bounded constant deviation variations introduced below. 
The local square-integrability requirement allows for trading rates that may become singular as $t\to T+$, while the pathwise integrability condition guarantees that the cumulative order remains well defined up to the terminal time.
%Local square-integrability accommodates trading rates that may become singular near $T$ while still ensuring a well-defined cumulative order and terminal wealth. The additional exponential-integrability condition ensures that the certainty equivalent in \eqref{eq:conditional_certainty_equivalent} is well defined when $\rho>0$.

Let $\widehat{\alpha}$ be a candidate feedback strategy. For $t<T$, a bounded constant action $a\in\R$, and sufficiently small $\varepsilon>0$, define the pasted strategy
\[
    a\otimes_{t,\eps}\widehat\alpha
    :=
    \begin{cases}
        a, & t\leq s<t+\varepsilon,\\
        \widehat{\alpha}(s,\xi_s^{t,\varepsilon,a};v),
        & t+\varepsilon\leq s<T.
    \end{cases}
\]
Thus, $a\otimes_{t,\varepsilon}\widehat{\alpha}$ deviates from $\widehat{\alpha}$ by using the constant trading rate $a$ on $[t,t+\varepsilon)$ and then reverts to $\widehat{\alpha}$ thereafter. We refer to $a\otimes_{t,\varepsilon}\widehat{\alpha}$ as a bounded constant pasting deviation from $\widehat{\alpha}$.

\begin{definition}[Equilibrium strategy of the informed trader]
\label{def:insider_equilibrium}
For a fixed pricing rule $(H,\lambda)$, an admissible feedback strategy $\widehat{\alpha}$ is called a weak equilibrium strategy if for every $t<T$, every admissible state $(\xi,w,v)$, and every bounded constant action $a\in\R$,
\begin{equation}
    \limsup_{\varepsilon\downarrow0}
    \frac{
        J(t,\xi,w;v,a\otimes_{t,\eps}\widehat\alpha )
        -
        J(t,\xi,w;v,\widehat{\alpha})
    }{\varepsilon}
    \leq0.
    \label{eq:insider_equilibrium_condition}
\end{equation}
%and every admissible bounded constant deviation $a$.
\end{definition}

\subsection{Market maker's problem: filtering and pricing}
\label{subsec:market_maker_problem}

The market maker is risk-neutral and sets the price according to the information contained in the aggregate order flow. For a trading strategy $\alpha$, a pricing rule $(H,\lambda)$ is \textit{rational} if
\begin{equation}
    H(t,\xi_t)
    =
    \mathbb{E}\left[V\mid\mathcal{F}^Y_t\right],
    \qquad
    0\leq t<T.
    \label{eq:rational_pricing}
\end{equation}
The market maker's inference problem is endogenous: the information contained in the order flow depends on the informed trader's strategy, while the strategy itself depends on the price inferred from that order flow.

Let $\Sigma(t):=\operatorname{Var}(V\mid\mathcal{F}^Y_t)$ denote the posterior variance of the fundamental value. The filtering equations governing the conditional mean and variance will be derived in Section~\ref{sec:main}. We require full revelation at the terminal time, $P_T=V$ $\mathbb{P}$-a.s., equivalently, $\Sigma(T)=0$. This is an equilibrium-path condition and therefore imposes no restriction on terminal states outside the support of the equilibrium state process.

\subsection{Intrapersonal and market equilibrium}
\label{subsec:equilibrium_definition}

The model naturally combines two levels of equilibrium requirements. On the individual level, the informed trader's strategy should satisfy the intrapersonal equilibrium condition in Definition \ref{def:insider_equilibrium} under the corresponding pricing rule. On the market level, the informed trader and the market maker should reach a pricing--trading equilibrium: the price must be rational relative to the order flow induced by the informed trader's strategy.

\begin{definition}[Market equilibrium]
\label{def:market_equilibrium}
A pair $\bigl((H^*,\lambda),\widehat{\alpha}^*\bigr)$ is called a weak market equilibrium if $\widehat{\alpha}^*$ is a weak equilibrium strategy under $(H^*,\lambda)$ in the sense of Definition~\ref{def:insider_equilibrium}, and $(H^*,\lambda)$ satisfies the rational pricing condition for the order flow $Y^*$ generated by $\widehat{\alpha}^*$:
\[
    H^*(t,\xi_t^*)
    =
    \mathbb{E}
    \left[
        V\mid\mathcal{F}^{Y^*}_t
    \right],
    \qquad
    0\leq t<T,
\]
where
\[
    \mathrm{d}Y_t^*
    =
    \widehat{\alpha}_t^*\,\mathrm{d}t
    +
    \sigma\,\mathrm{d}B_t,
    \qquad
    \xi_t^*
    =
    \int_0^t
    \lambda(s)\,\mathrm{d}Y_s^*.
\]
The equilibrium must also satisfy $H^*(T,\xi_T^*)=V$ almost surely.
\end{definition}

The two equilibrium requirements are coupled. The extended HJB system characterizes the informed trader's response to a fixed pricing rule, while rational filtering determines whether that pricing rule is consistent with the resulting order flow. Their joint characterization is developed in Section~\ref{sec:main}.

\section{Main results}
\label{sec:main}

\subsection{Informed trader's problem: extended HJB}
\label{subsec:extended_hjb}

We now fix a pricing rule $(H,\lambda)$ and derive the extended HJB system for the informed trader. Throughout this subsection, we condition on $V=v$ and denote the mispricing by $z(t,\xi;v):=v-H(t,\xi)$. Under a trading rate $\alpha$, the state and wealth processes satisfy $\mathrm{d}\xi_t=\lambda(t)\alpha_t\,\mathrm{d}t+\sigma\lambda(t)\,\mathrm{d}B_t$ and $\mathrm{d}W_t=z(t,\xi_t;v)\alpha_t\,\mathrm{d}t$, respectively. %The wealth process has no diffusion term; the uncertainty in terminal wealth is transmitted through the price state $\xi$. 
For a smooth function $\varphi(t,\xi,w)$ and an action $a\in\mathbb{R}$, define the generator
\begin{equation}
    \mathcal{L}^a\varphi
    :=
    \varphi_t
    +
    \lambda(t)a\varphi_\xi
    +
    z(t,\xi;v)a\varphi_w
    +
    \frac{1}{2}\sigma^2\lambda(t)^2\varphi_{\xi\xi}.
    \label{eq:insider_generator}
\end{equation}

The integral $\int_t^Tz(s,\xi_s^\alpha;v)\alpha_s\,\mathrm{d}s$ represents the total running profit for the informed trader, and 
%Let$R_{t,T}^{\alpha}:=\int_t^Tz(s,\xi_s^\alpha;v)\alpha_s\,\mathrm{d}s$ denote the continuation profit, so that 
$W_T^\alpha=w+\int_t^Tz(s,\xi_s^\alpha;v)\alpha_s\,\mathrm{d}s$. Let $\widehat{\alpha}$ be a weak equilibrium feedback strategy. We define its conditional certainty equivalent and conditional mean by
\begin{equation}
    h(t,\xi;v)
    :=
    \operatorname{CE}_{\rho,t,\xi}
    \left(\int_t^Tz(s,\xi_s^{\widehat\alpha};v)\widehat\alpha_s\,\mathrm{d}s\right),
    \qquad
    m(t,\xi;v)
    :=
    \mathbb{E}_{t,\xi}
    \left[ \int_t^Tz(s,\xi_s^{\widehat\alpha};v)\widehat\alpha_s\,\mathrm{d}s  \right],
    \label{eq:continuation_h_m}
\end{equation}
and the equilibrium continuation value by
\begin{equation}
    u(t,\xi;v)
    :=
    h(t,\xi;v)
    -
    \frac{\eta}{2}
    \operatorname{Var}_{t,\xi}
    \left( \int_t^Tz(s,\xi_s^{\widehat\alpha};v)\widehat\alpha_s\,\mathrm{d}s \right).
    \label{eq:continuation_u}
\end{equation}
Since current wealth is known at time $t$ and enters terminal wealth additively, the translation invariance of the conditional certainty equivalent and conditional variance yields $J(t,\xi,w;v,\widehat{\alpha})=w+u(t,\xi;v)$. We shall also use $g(t,\xi,w;v):=w+m(t,\xi;v)$ and $U(t,\xi,w;v):=w+u(t,\xi;v)$.

We first derive the auxiliary equations satisfied by $h$ and $m$ under the equilibrium continuation strategy. Suppose $\rho>0$ and define $\Phi(t,\xi,w;v):=e^{-\rho(w+h(t,\xi;v))}$. The definition of $h$ in \eqref{eq:continuation_h_m} implies
\[
    \Phi(t,\xi,w;v)
    =
    \mathbb{E}_{t,\xi,w}
    \left[
        e^{-\rho W_T^{\widehat{\alpha}}}
    \right],
\]
and hence $\mathcal{L}^{\widehat{\alpha}}\Phi=0$. Substituting the exponential representation of $\Phi$ and dividing by $-\rho\Phi$ imply
\begin{equation}
    0
    =
    h_t
    +
    \frac{1}{2}\sigma^2\lambda(t)^2h_{\xi\xi}
    -
    \frac{\rho}{2}\sigma^2\lambda(t)^2h_\xi^2
    +
    \widehat{\alpha}(t,\xi;v)
    \bigl(
        \lambda(t)h_\xi
        +
        v-H(t,\xi)
    \bigr).
    \label{eq:auxiliary_h}
\end{equation}
%The continuation mean 
Moreover, $m$ in \eqref{eq:continuation_h_m} satisfies the Kolmogorov equation
\begin{equation}
    0
    =
    m_t
    +
    \frac{1}{2}\sigma^2\lambda(t)^2m_{\xi\xi}
    +
    \widehat{\alpha}(t,\xi;v)
    \bigl(
        \lambda(t)m_\xi
        +
        v-H(t,\xi)
    \bigr).
    \label{eq:auxiliary_m}
\end{equation}

%Following the game-theoretic approach, we view the problem as an intrapersonal game among the trader's successive selves. 
The extended HJB equation associated with the criterion \eqref{eq:risk_averse_objective} is
\begin{equation}
    0
    =
    \sup_{a\in\mathbb{R}}
    \left\{
        u_t
        +
        \frac{1}{2}\sigma^2\lambda(t)^2u_{\xi\xi}
        -
        \frac{\rho}{2}\sigma^2\lambda(t)^2h_\xi^2
        -
        \frac{\eta}{2}\sigma^2\lambda(t)^2m_\xi^2
        +
        a\bigl(
            \lambda(t)u_\xi
            +
            v-H(t,\xi)
        \bigr)
    \right\}.
    \label{eq:extended_hjb_general}
\end{equation}
The term involving $h_\xi^2$ arises from the exponential certainty equivalent, while the term involving $m_\xi^2$ is the correction induced by the time inconsistency of the conditional variance penalty.

Combining \eqref{eq:auxiliary_h}, \eqref{eq:auxiliary_m}, and \eqref{eq:extended_hjb_general}, we obtain the extended HJB system
\begin{equation}
    \left\{
    \begin{aligned}
        0
        &=
        \sup_{a\in\mathbb{R}}
        \left\{
            u_t
            +
            \frac{1}{2}\sigma^2\lambda(t)^2u_{\xi\xi}
            -
            \frac{\rho}{2}\sigma^2\lambda(t)^2h_\xi^2
            -
            \frac{\eta}{2}\sigma^2\lambda(t)^2m_\xi^2
            +
            a\bigl(
                \lambda(t)u_\xi
                +
                v-H(t,\xi)
            \bigr)
        \right\},\\
        0
        &=
        h_t
        +
        \frac{1}{2}\sigma^2\lambda(t)^2h_{\xi\xi}
        -
        \frac{\rho}{2}\sigma^2\lambda(t)^2h_\xi^2
        +
        \widehat{\alpha}(t,\xi;v)
        \bigl(
            \lambda(t)h_\xi
            +
            v-H(t,\xi)
        \bigr),\\
        0
        &=
        m_t
        +
        \frac{1}{2}\sigma^2\lambda(t)^2m_{\xi\xi}
        +
        \widehat{\alpha}(t,\xi;v)
        \bigl(
            \lambda(t)m_\xi
            +
            v-H(t,\xi)
        \bigr).
    \end{aligned}
    \right.
    \label{eq:extended_hjb_system}
\end{equation}

Since the control set is unbounded and the Hamiltonian in the first equation of \eqref{eq:extended_hjb_system} is linear in $a$, the supremum is finite only if
\begin{equation}
    \lambda(t)u_\xi(t,\xi;v)  +v-H(t,\xi)= 0.
    \label{eq:singular_constraint}
\end{equation}
The extended HJB system then reduces to
\begin{equation}
    \left\{
    \begin{aligned}
        0
        &=
        \lambda(t)u_\xi+v-H(t,\xi),\\
        0
        &=
        u_t
        +
        \frac{1}{2}\sigma^2\lambda(t)^2u_{\xi\xi}
        -
        \frac{\rho}{2}\sigma^2\lambda(t)^2h_\xi^2
        -
        \frac{\eta}{2}\sigma^2\lambda(t)^2m_\xi^2,\\
        0
        &=
        h_t
        +
        \frac{1}{2}\sigma^2\lambda(t)^2h_{\xi\xi}
        -
        \frac{\rho}{2}\sigma^2\lambda(t)^2h_\xi^2
        +
        \widehat{\alpha}(t,\xi;v)
        \bigl(
            \lambda(t)h_\xi
            +
            v-H(t,\xi)
        \bigr),\\
        0
        &=
        m_t
        +
        \frac{1}{2}\sigma^2\lambda(t)^2m_{\xi\xi}
        +
        \widehat{\alpha}(t,\xi;v)
        \bigl(
            \lambda(t)m_\xi
            +
            v-H(t,\xi)
        \bigr),
    \end{aligned}
    \right.
    \label{eq:reduced_extended_hjb}
\end{equation}
with terminal conditions
\begin{equation}
    h(T,\xi_T;v)
    =
    m(T,\xi_T;v)
    =
    u(T,\xi_T;v)
    =
    0.
    \label{eq:terminal_continuation_values}
\end{equation}
Condition \eqref{eq:singular_constraint} eliminates the dependence of the first equation on the action variable $a$. The candidate trading rate is therefore not determined by pointwise maximization. Later we shall see it is determined jointly by the auxiliary equations, rational pricing, and $\Sigma(T)=0$.

\noindent\textbf{The risk-neutral case.}
Throughout the paper, we will highlight the so-called \textit{risk-neutral} case, by which we mean the absence of CARA risk aversion, i.e. $\rho=0$. This benchmark is particularly useful because it admits an explicit equilibrium solution, making the coupling between the trader's intrapersonal equilibrium condition and the market maker's filtering condition transparent. It also provides a natural benchmark for the shooting construction used later in the risk-averse case.

In this case, the conditional certainty equivalent reduces to the conditional mean, so $h=m$. The auxiliary equation for $h$ then coincides with \eqref{eq:auxiliary_m}, and %the separate construction of $h$ is unnecessary. 
the extended HJB system becomes
\begin{equation}
    \left\{
    \begin{aligned}
        0
        &=
        \sup_{a\in\mathbb{R}}
        \left\{
            u_t
            +
            \frac{1}{2}\sigma^2\lambda(t)^2u_{\xi\xi}
            -
            \frac{\eta}{2}\sigma^2\lambda(t)^2m_\xi^2
            +
            a\bigl(
                \lambda(t)u_\xi
                +
                v-H(t,\xi)
            \bigr)
        \right\},\\
        0
        &=
        m_t
        +
        \frac{1}{2}\sigma^2\lambda(t)^2m_{\xi\xi}
        +
        \widehat{\alpha}(t,\xi;v)
        \bigl(
            \lambda(t)m_\xi
            +
            v-H(t,\xi)
        \bigr).
    \end{aligned}
    \right.
    \label{eq:risk_neutral_extended_hjb_system}
\end{equation}
Since the Hamiltonian remains linear in $a$, the singular constraint \eqref{eq:singular_constraint} is unchanged. Hence, \eqref{eq:risk_neutral_extended_hjb_system} reduces to
\begin{equation}
    \left\{
    \begin{aligned}
        0
        &=
        \lambda(t)u_\xi+v-H(t,\xi),\\
        0
        &=
        u_t
        +
        \frac{1}{2}\sigma^2\lambda(t)^2u_{\xi\xi}
        -
        \frac{\eta}{2}\sigma^2\lambda(t)^2m_\xi^2,\\
        0
        &=
        m_t
        +
        \frac{1}{2}\sigma^2\lambda(t)^2m_{\xi\xi}
        +
        \widehat{\alpha}(t,\xi;v)
        \bigl(
            \lambda(t)m_\xi
            +
            v-H(t,\xi)
        \bigr),
    \end{aligned}
    \right.
    \label{eq:risk_neutral_reduced_extended_hjb}
\end{equation}
with the terminal conditions
\begin{equation}
    m(T,\xi_T;v)=u(T,\xi_T;v)=0.
    \label{eq:risk_neutral_terminal_continuation_values}
\end{equation}
Thus, the risk-neutral construction uses the two continuation functions $m$ and $u$, whereas the risk-averse construction additionally requires the certainty-equivalent function $h$.

\subsection{Gaussian--linear ansatz}
\label{subsec:gaussian_linear_ansatz}

Under the Gaussian prior specified in Section~\ref{sec:model}, we seek an equilibrium in which both the pricing rule and the informed trading strategy are linear. We conjecture that $H(t,\xi)=\mu+q\xi$ for some constant $q>0$, where the positivity of $q$ follows from the strict monotonicity of $H$ with respect to $\xi$.

The representation of the price is invariant under the rescaling
\[
    (q,\lambda,\xi)
    \longmapsto
    \left(\frac{q}{c},c\lambda,c\xi\right),
    \qquad c>0.
\]
Indeed, replacing $\lambda$ by $c\lambda$ scales $\xi$ by the same factor, while replacing $q$ by $q/c$ leaves $H(t,\xi_t)$ unchanged. Taking $c=q$, we normalize $q=1$. By relabeling the rescaled state and coefficient as $\xi$ and $\lambda$ respectively, we obtain $H(t,\xi)=\mu+\xi$ and
\begin{equation}
    \mathrm{d}H(t,\xi_t)
    =
    \lambda(t)\,\mathrm{d}Y_t.
    \label{eq:linear_price_dynamics}
\end{equation}
Under this normalization, $\lambda$ represents the price impact associated with the aggregate order flow.

For a fixed realization $V=v$, define the pricing error by $e(t,\xi;v):=v-H(t,\xi)=v-\mu-\xi$. We write $e$ when its arguments are clear. We conjecture that the equilibrium feedback strategy and the continuation functions take the forms
\begin{equation}
    \widehat{\alpha}(t,e)
    =
    k(t)e,
    \qquad
    m(t,e)
    =
    A(t)e^2+C(t),
    \qquad
    h(t,e)
    =
    \Gamma(t)e^2+E(t),
    \label{eq:linear_quadratic_ansatz}
\end{equation}
where $k,A,\Gamma,C$, and $E$ are deterministic functions of time. Integrating the singular constraint \eqref{eq:singular_constraint} under the normalization $H(t,\xi)=\mu+\xi$ yields
\begin{equation}
    u(t,e)
    =
    \frac{e^2}{2\lambda(t)}
    +
    D(t),
    \label{eq:u_ansatz}
\end{equation}
where $D$ is deterministic.

Now we derive formulas satisfied by $A, C, \gamma, E, D, \lambda, k$. By first substituting the quadratic representation of $m$ in \eqref{eq:linear_quadratic_ansatz} into \eqref{eq:auxiliary_m}, then combining with $m_\xi=-2Ae$ and $m_{\xi\xi}=2A$, we can matching the coefficients of $e^2$ and the constant terms to obtain that
\begin{equation}
    A'(t)
    +
    k(t)\bigl(1-2\lambda(t)A(t)\bigr)
    =
    0,
    \qquad
    C'(t)
    +
    \sigma^2\lambda(t)^2A(t)
    =
    0.
    \label{eq:A_C_equations}
\end{equation}
Similarly, substituting the quadratic representation of $h$ in \eqref{eq:linear_quadratic_ansatz} into \eqref{eq:auxiliary_h}  and combining with $h_\xi=-2\Gamma e$ and $h_{\xi\xi}=2\Gamma$, then matching the coefficients yields
\begin{equation}
    \Gamma'(t)
    +
    k(t)\bigl(1-2\lambda(t)\Gamma(t)\bigr)
    -
    2\rho\sigma^2\lambda(t)^2\Gamma(t)^2
    =
    0,
    \qquad
    E'(t)
    +
    \sigma^2\lambda(t)^2\Gamma(t)
    =
    0.
    \label{eq:B_E_equations}
\end{equation}
The additional quadratic term in the equation for $\Gamma$ reflects the CARA risk aversion of the informed trader.
Finally, substituting \eqref{eq:linear_quadratic_ansatz} and \eqref{eq:u_ansatz} into the second equation of \eqref{eq:reduced_extended_hjb} yields
\begin{equation}
    \lambda'(t)
    =
    -4\sigma^2\lambda(t)^4
    \bigl(
        \rho \Gamma(t)^2+\eta A(t)^2
    \bigr),
    \qquad
    D'(t)
    +
    \frac{1}{2}\sigma^2\lambda(t)
    =
    0.
    \label{eq:lambda_D_equations}
\end{equation}
Notice that the two risk adjustments $\rho$ and $\eta$ enter the dynamics of price impact through $\rho \Gamma^2+\eta A^2$.

Along the equilibrium path, terminal revelation implies
\[
    e(T,\xi_T;v)
    =
    v-H(T,\xi_T)
    =
    0.
\]
Consequently, the terminal relations in \eqref{eq:terminal_continuation_values} reduce to
\begin{equation}
    C(T)=E(T)=D(T)=0.
    \label{eq:constant_terminal_conditions}
\end{equation}
No terminal conditions for $A$ and $\Gamma$ follow directly from \eqref{eq:terminal_continuation_values}, since their quadratic terms vanish along the equilibrium path.

The functions $C,E$, and $D$ do not affect the candidate trading strategy or the price impact. The remaining functions $A,\Gamma,\lambda$, and $k$ are determined by combining \eqref{eq:A_C_equations}--\eqref{eq:lambda_D_equations} with the rational pricing condition.

\noindent\textbf{The risk-neutral case.} When $\rho=0$, the identity $h=m$ makes the separate quadratic representation of $h$ unnecessary. We therefore use
\begin{equation}
    \widehat{\alpha}(t,e)=k(t)e,
    \qquad
    m(t,e)=A(t)e^2+C(t),
    \qquad
    u(t,e)=\frac{e^2}{2\lambda(t)}+D(t).
    \label{eq:risk_neutral_linear_quadratic_ansatz}
\end{equation}
Substituting \eqref{eq:risk_neutral_linear_quadratic_ansatz} into \eqref{eq:risk_neutral_reduced_extended_hjb} and matching the quadratic and constant terms yield
\begin{equation}
    \left\{
    \begin{aligned}
        0
        &=
        A'(t)
        +
        k(t)\bigl(1-2\lambda(t)A(t)\bigr),\\
        0
        &=
        C'(t)
        +
        \sigma^2\lambda(t)^2A(t),\\
        0
        &=
        \lambda'(t)
        +
        4\eta\sigma^2\lambda(t)^4A(t)^2,\\
        0
        &=
        D'(t)
        +
        \frac{1}{2}\sigma^2\lambda(t).
    \end{aligned}
    \right.
    \label{eq:risk_neutral_coefficient_equations}
\end{equation}
Terminal revelation and \eqref{eq:risk_neutral_terminal_continuation_values} imply
\begin{equation}
    C(T)=D(T)=0.
    \label{eq:risk_neutral_constant_terminal_conditions}
\end{equation}
We can see that the risk-neutral coefficient system consists of $A,C,\lambda,D$, and $k$. The additional functions $\Gamma$ and $E$ are needed only when $\rho>0$.

\subsection{Rational pricing and the coupled system}
\label{subsec:rational_pricing_system}

It remains to impose the rational pricing condition on the Gaussian--linear candidate. By substituting the representation of $\widehat \alpha$ in \eqref{eq:linear_quadratic_ansatz} into \eqref{eq:aggregate_order}, the dynamics of the aggregate order flow are
\begin{equation}
    \mathrm{d}Y_t
    =
    k(t)(V-P_t)\,\mathrm{d}t
    +
    \sigma\,\mathrm{d}B_t.
    \label{eq:linear_order_flow}
\end{equation}
Since $V$ is Gaussian and the observation dynamics are linear with Gaussian noise, the conditional distribution of $V$ given the order flow remains Gaussian.

Let $\widehat{V}_t:=\mathbb{E}[V\mid\mathcal{F}_t^Y]$ and $\Sigma(t):=\operatorname{Var}(V\mid\mathcal{F}_t^Y)$ denote the conditional mean and conditional variance, respectively. The rational pricing condition requires $P_t=\widehat{V}_t$. Moreover,
\[
    \mathbb{E}
    \left[
        \widehat{\alpha}_t
        \mid\mathcal{F}_t^Y
    \right]
    =
    k(t)\bigl(\widehat{V}_t-P_t\bigr)
    =
    0.
\]
Thus, the informed trader's strategy is inconspicuous from the perspective of the market maker.

The Kalman--Bucy filter associated with \eqref{eq:linear_order_flow} yields
\begin{equation}
    \mathrm{d}\widehat{V}_t
    =
    \frac{k(t)\Sigma(t)}{\sigma^2}\,\mathrm{d}Y_t,
    \qquad
    \Sigma'(t)
    =
    -\frac{k(t)^2}{\sigma^2}\Sigma(t)^2.
    \label{eq:filtering_equations}
\end{equation}
%Comparing the conditional-mean dynamics with
Combining this with $\widehat V_t =P_t$ and $\mathrm{d}P_t=\lambda(t)\,\mathrm{d}Y_t$ implies $\lambda(t)=k(t)\Sigma(t)/\sigma^2$, or equivalently
\begin{equation} 
    k(t)
    =
    \frac{\sigma^2\lambda(t)}{\Sigma(t)}.
    \label{eq:filtering_consistency}
\end{equation}
Substitution into the second equality in \eqref{eq:filtering_equations} yields
\begin{equation}
    \Sigma'(t)=-\sigma^2\lambda(t)^2,\quad 
    \frac{\Sigma'(t)}{\Sigma(t)}=-\lambda(t)k(t).
    \label{eq:variance_equation}
\end{equation}
The prior distribution and full revelation impose $\Sigma(0)=\Sigma_0$ and $\Sigma(T)=0$, respectively.

The equations for $A$ and $\Gamma$ in \eqref{eq:A_C_equations} and \eqref{eq:B_E_equations} depend on these functions through the products $2\lambda A$ and $2\lambda\Gamma$, whereas \eqref{eq:filtering_consistency} becomes singular as $\Sigma$ approaches zero. We therefore introduce the dimensionless variables
\begin{equation}
    r(t):=2\lambda(t)A(t),
    \qquad
    s(t):=2\lambda(t)\Gamma(t).
    \label{eq:dimensionless_variables}
\end{equation}
These transformations isolate the factors $1-r$ and $1-s$ and make the terminal singularity explicit. Combining \eqref{eq:A_C_equations}, \eqref{eq:B_E_equations}, \eqref{eq:lambda_D_equations}, and \eqref{eq:variance_equation} yields the closed system
\begin{equation}
    \left\{
    \begin{aligned}
        \Sigma'(t)
        &=
        -\sigma^2\lambda(t)^2,\\
        \lambda'(t)
        &=
        -\sigma^2\lambda(t)^2
        \bigl(
            \rho s(t)^2+\eta r(t)^2
        \bigr),\\
        r'(t)
        &=
        -\sigma^2\lambda(t)
        \bigl(
            \rho s(t)^2+\eta r(t)^2
        \bigr)r(t)
        +
        2\frac{\Sigma'(t)}{\Sigma(t)}
        \bigl(1-r(t)\bigr),\\
        s'(t)
        &=
        2\frac{\Sigma'(t)}{\Sigma(t)}
        \bigl(1-s(t)\bigr)
        +
        \rho\sigma^2\lambda(t)s(t)^2
        \bigl(1-s(t)\bigr)
        -
        \eta\sigma^2\lambda(t)r(t)^2s(t).
    \end{aligned}
    \right.
    \label{eq:coupled_system}
\end{equation}
The full revelation gives $\Sigma(T)=0$, but \eqref{eq:coupled_system} does not determine $\lambda(T-)$, $r(T-)$, or $s(T-)$. We therefore restrict our searching of solutions to the nondegenerate regime: $\lambda(T-)\in(0,\infty)$. Given $\lambda(T-)=b \in(0,\infty)$, \eqref{eq:variance_equation} yields $\Sigma(t)\sim\sigma^2b^2(T-t)$ and $\Sigma'(t)/\Sigma(t)\sim-1/(T-t)$ as $t\uparrow T$. Thus, the equations for $r$ and $s$ in \eqref{eq:coupled_system} contain singular terms of order $(T-t)^{-1}(1-r(t))$ and $(T-t)^{-1}(1-s(t))$, respectively. Consequently, if $r$ and $s$ are required to remain finite and admit finite terminal limits, compatibility with the system forces $r(T-)=s(T-)=1$. Accordingly, we consider the system \eqref{eq:coupled_system} with the terminal condition:
\begin{equation}\label{eq:coupled_system.termial}
\begin{aligned}
\lambda(T-)=b \in(0,\infty),\quad r(T-)=s(T-)=1.
\end{aligned}
\end{equation}
We will show that the terminal value $b\in(0,\infty)$ can be selected uniquely so that $\Sigma(0)=\Sigma_0$ and that the corresponding $\lambda$, $r$, and $s$ remain finite and positive on $[0,T]$. 

Once this system has been solved, $A$ and $\Gamma$ are recovered from \eqref{eq:dimensionless_variables}, while the candidate trading coefficient $k$ follows from \eqref{eq:filtering_consistency}. The functions $C,E$, and $D$ are subsequently determined by \eqref{eq:A_C_equations}--\eqref{eq:lambda_D_equations} and the terminal conditions in \eqref{eq:constant_terminal_conditions}.

% this stage, full revelation determines $\Sigma(T)=0$, but \eqref{eq:coupled_system} does not determine $\lambda(T-)$, $r(T-)$, or $s(T-)$. We therefore restrict attention to the nondegenerate regime $\lambda(T-)=b\in(0,\infty)$ and $r(T-)=s(T-)=1$, thereby excluding a nonpositive or unbounded terminal limit of $\lambda$ and unbounded terminal limits of $r$ and $s$. These conditions are introduced as working hypotheses rather than consequences of \eqref{eq:coupled_system}. Under $\lambda(T-)=b\in(0,\infty)$, \eqref{eq:variance_equation} yields $\Sigma(t)\sim\sigma^2b^2(T-t)$ and $\Sigma'(t)/\Sigma(t)\sim-1/(T-t)$ as $t\uparrow T$. Hence, $r(T-)=s(T-)=1$ are the terminal compatibility conditions that offset the potentially singular terms involving $1-r$ and $1-s$ in the equations for $r$ and $s$. The construction below verifies that these terminal conditions are consistent with the coupled system.

%The terminal value $b$ is initially treated as a shooting parameter. We will show that it can be selected uniquely so that $\Sigma(0)=\Sigma_0$ and that the corresponding $\lambda$, $r$, and $s$ remain finite and positive on $[0,T]$. This establishes the feasibility of the nondegenerate terminal conditions used in the shooting construction.

\noindent\textbf{The risk-neutral case.}
When $\rho=0$, we have $h=m$ and $s=r$. The coupled system \eqref{eq:coupled_system} then reduces to
\begin{equation}
    \left\{
    \begin{aligned}
        \Sigma'(t)
        &=
        -\sigma^2\lambda(t)^2,\\
        \lambda'(t)
        &=
        -\eta\sigma^2\lambda(t)^2r(t)^2,\\
        r'(t)
        &=
        -\eta\sigma^2\lambda(t)r(t)^3
        +
        2\frac{\Sigma'(t)}{\Sigma(t)}
        \bigl(1-r(t)\bigr).
    \end{aligned}
    \right.
    \label{eq:risk_neutral_coupled_system}
\end{equation}
The corresponding nondegenerate terminal conditions are $\lambda(T-)=b\in(0,\infty)$ and $r(T-)=1$. The terminal value $b$ is again treated as a shooting parameter and determined by $\Sigma(0)=\Sigma_0$.

Systems \eqref{eq:coupled_system} and \eqref{eq:risk_neutral_coupled_system} combine the informed trader's intrapersonal equilibrium condition with the market maker's rational pricing condition. Their terminal singularities and the determination of the corresponding shooting parameters are addressed in Section~\ref{sec:proof}.

\subsection{Main theorem}
\label{sec:main_theorem}

We first present the risk-neutral case $\rho=0$ and $\eta>0$, for which the coefficient system admits an explicit solution. We then consider the risk-averse case $\rho,\eta>0$, where a scaling argument reduces the coupled system to a universal initial-value problem independent of the shooting parameter.

\subsubsection{The risk-neutral case}

The following result establishes the unique explicit solution of the risk-neutral coupled system \eqref{eq:risk_neutral_coupled_system}.

\begin{theorem}[Explicit Gaussian--linear solution]
\label{thm:explicit_solution}

Let
\begin{equation}
    \mathcal{T}_0(b)
    :=
    \frac{3}{5\sigma^2\eta b}
    \left[
        1-
        \left(
            1+\frac{\eta\Sigma_0}{3b}
        \right)^{-5}
    \right],\;\; \text{for $b\in(0,\infty)$}.
    \label{eq:coefficient_map}
\end{equation}
% Then $\mathcal{T}_0(b)$ is continuous, strictly decreasing and satisfies 
% $$
% \lim_{b\to 0+} \mathcal T_0 (b)
% $$
% As $b$ increases from zero to infinity, $\mathcal{T}_0(b)$ is continuous and strictly decreasing from infinity to zero. 
Then $\mathcal{T}_0$ is continuous and strictly decreasing on $(0,\infty)$, with $\mathcal{T}_0(b)\to\infty$ as $b\downarrow0$ and $\mathcal{T}_0(b)\to0$ as $b\uparrow\infty$. Hence, there exists a unique $b^*>0$ such that
%\begin{equation}
$    \mathcal{T}_0(b^*)=T.$
%    \label{eq:coefficient_equation}
%\end{equation}

Define $\chi(t):=1-\frac{5\sigma^2\eta b^*}{3}(T-t)$. Then $\chi$ is strictly positive on $[0,T]$, and the risk-neutral coupled system \eqref{eq:risk_neutral_coupled_system} admits, within the nondegenerate terminal class satisfying $\lambda(T-)\in(0,\infty)$, the unique solution
\begin{equation}
    \lambda(t)
    =
    b^*\chi(t)^{-3/5},
    \qquad
    r(t)
    =
    \chi(t)^{-1/5},
    \qquad
    \Sigma(t)
    =
    \frac{3b^*}{\eta}
    \left(
        \chi(t)^{-1/5}-1
    \right).
    \label{eq:explicit_coefficient_solution}
\end{equation}
In particular, $\lambda(T-)=b^*$, $r(T-)=1$, $\Sigma(0)=\Sigma_0$, and $\Sigma(T)=0$. Both $\lambda$ and $r$ remain finite and strictly positive on $[0,T]$.
\end{theorem}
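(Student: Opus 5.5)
We will use the posterior variance $\Sigma$ as the independent variable instead of $t$. This turns the terminal singularity at $t=T$ into a regular singular point at $\Sigma=0$, where bounded solutions are easy to classify.

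\textbf{Properties of $\mathcal{T}_0$.} Set $x:=\eta\Sigma_0/(3b)$, so that $1/b=3x/(\eta\Sigma_0)$. Then
\[
\mathcal{T}_0(b)=\frac{9}{5\sigma^2\eta^2\Sigma_0}\,x\bigl(1-(1+x)^{-5}\bigr).
\]
The right-hand side is continuous and strictly increasing in $x\in(0,\infty)$, tends to $0$ as $x\downarrow0$, and tends to $\infty$ as $x\uparrow\infty$. Since $x$ is a strictly decreasing bijection of $b\in(0,\infty)$ onto $(0,\infty)$, the claimed monotonicity, the two limits, and the existence and uniqueness of $b^*$ with $\mathcal{T}_0(b^*)=T$ follow.

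\textbf{Positivity of $\chi$ and verification.} The function $\chi$ is affine and increasing in $t$, with $\chi(T)=1$. From $\mathcal{T}_0(b^*)=T$ we get
\[
\chi(0)=1-\frac{5\sigma^2\eta b^*}{3}\,T=\Bigl(1+\frac{\eta\Sigma_0}{3b^*}\Bigr)^{-5}>0,
\]
so $\chi>0$ on $[0,T]$. This identity also gives $\chi(0)^{-1/5}=1+\eta\Sigma_0/(3b^*)$, hence $\Sigma(0)=\Sigma_0$; clearly $\Sigma(T)=0$. Next we differentiate the formulas in \eqref{eq:explicit_coefficient_solution}, using $\chi'=5\sigma^2\eta b^*/3$, and substitute into \eqref{eq:risk_neutral_coupled_system}. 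For instance,
\[
\lambda'=-\tfrac35 b^*\chi^{-8/5}\chi'=-\eta\sigma^2(b^*)^2\chi^{-8/5}=-\eta\sigma^2\lambda^2r^2,
\]
and $\Sigma'=-\sigma^2(b^*)^2\chi^{-6/5}=-\sigma^2\lambda^2$. The $r$-equation is easiest to check in the $\Sigma$-variable below. The terminal values $\lambda(T-)=b^*$ and $r(T-)=1$, as well as finiteness and positivity of $\lambda$ and $r$, are immediate from $\chi\in(0,1]$.

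\textbf{Uniqueness.} This is the main step. Take any solution in the nondegenerate class with $\lambda(T-)=b\in(0,\infty)$. Because $\Sigma'=-\sigma^2\lambda^2<0$, the map $t\mapsto\Sigma(t)$ is a strictly decreasing bijection of $[0,T]$ onto $[0,\Sigma_0]$. In the variable $\Sigma$, the system becomes
\[
\frac{d\lambda}{d\Sigma}=\eta r^2,\qquad \frac{d r}{d\Sigma}=\frac{\eta r^3}{\lambda}-\frac{2(r-1)}{\Sigma},\qquad \frac{dt}{d\Sigma}=-\frac{1}{\sigma^2\lambda^2}.
\]
The second equation is equivalent to $\frac{d}{d\Sigma}\bigl(\Sigma^2(r-1)\bigr)=\Sigma^2\eta r^3/\lambda$. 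The homogeneous solutions $r-1=c\Sigma^{-2}$ blow up at $\Sigma=0$, so boundedness of $r$ forces $c=0$. Thus every admissible solution satisfies the integral system
\[
\lambda(\Sigma)=b+\int_0^\Sigma\eta r^2\,du,\qquad r(\Sigma)=1+\Sigma^{-2}\int_0^\Sigma u^2\,\frac{\eta r^3}{\lambda}\,du .
\]
The averaging operator $f\mapsto\Sigma^{-2}\int_0^\Sigma u^2 f\,du$ has norm at most $\Sigma/3$ in sup-norm on $[0,\Sigma]$. The integrands are locally Lipschitz as long as $\lambda$ stays away from zero, which holds since $\lambda\ge b>0$. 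A Gronwall/contraction argument therefore shows that for each $b$ there is at most one bounded solution on $[0,\Sigma_0]$.

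One checks directly that $\lambda=b\,r^3$ with $r=1+\eta\Sigma/(3b)$ solves this integral system. This identifies it as the solution, and it coincides with \eqref{eq:explicit_coefficient_solution} once time is reintroduced. Indeed, integrating $dt/d\Sigma$ gives
\[
T-t=\int_0^{\Sigma}\frac{du}{\sigma^2b^2(1+\eta u/(3b))^6}=\frac{3}{5\sigma^2\eta b}\Bigl[1-\Bigl(1+\frac{\eta\Sigma}{3b}\Bigr)^{-5}\Bigr].
\]
Setting $\Sigma=\Sigma_0$ and $t=0$ yields the shooting equation $\mathcal{T}_0(b)=T$, which forces $b=b^*$. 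Inverting the same relation gives $1+\eta\Sigma/(3b)=\chi(t)^{-1/5}$, which recovers all three formulas in \eqref{eq:explicit_coefficient_solution}.

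The main obstacle is the uniqueness step. One must justify the change of variables near $t=T$, using only $\lambda(T-)=b$ and finiteness of $r$, and one must rule out the singular branch $c\Sigma^{-2}$. The integral reformulation is exactly what handles both points.
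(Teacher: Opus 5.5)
Your proposal is correct and follows the paper's route: the same change of variables $x=\Sigma(t)$, the same Volterra/integral form with explicit solution $R_b=1+\eta x/(3b)$, $\Lambda_b=bR_b^3$, the same horizon map and monotonicity via $z\mapsto z[1-(1+z)^{-5}]$, and the same inversion that produces $\chi$. The only differences are minor. You get uniqueness of the integral system from a single global Gronwall estimate on $[0,\Sigma_0]$ (using $u^2/\Sigma^2\le 1$ and $\lambda\ge b$), where the paper uses a local Banach contraction near $x=0$ followed by classical ODE uniqueness and a continuation argument for $x>0$; you also spell out explicitly why any solution in the class must have $b=b^*$.
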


Theorem~\ref{thm:explicit_solution} determines the price impact from the model parameters $(T,\eta,\sigma,\Sigma_0)$. The price impact decreases over time, while the conditional variance decreases to zero as private information is incorporated into the price. Substituting \eqref{eq:explicit_coefficient_solution} back into the equations for $A$ and $k$ yields
\begin{equation}
    A(t)
    =
    \frac{\chi(t)^{2/5}}{2b^*},
    \qquad
    k(t)
    =
    \frac{\sigma^2\eta}{3}
    \frac{\chi(t)^{-3/5}}
    {\chi(t)^{-1/5}-1},
    \qquad 0\leq t<T.
    \label{eq:explicit_A_k}
\end{equation}
Substituting these functions into \eqref{eq:A_C_equations} and \eqref{eq:lambda_D_equations} and using $C(T)=D(T)=0$ yield
\begin{equation}
    C(t)
    =
    \frac{3}{2\eta}
    \left(
        1-\chi(t)^{1/5}
    \right),
    \qquad
    D(t)
    =
    \frac{3}{4\eta}
    \left(
        1-\chi(t)^{2/5}
    \right).
    \label{eq:explicit_C_D}
\end{equation}
Consequently,
\begin{equation}
    m(t,e)
    =
    \frac{\chi(t)^{2/5}}{2b^*}e^2
    +
    \frac{3}{2\eta}
    \left(
        1-\chi(t)^{1/5}
    \right),
    \qquad
    u(t,e)
    =
    \frac{\chi(t)^{3/5}}{2b^*}e^2
    +
    \frac{3}{4\eta}
    \left(
        1-\chi(t)^{2/5}
    \right).
    \label{eq:explicit_m_u}
\end{equation}

The preceding result determines the coefficient functions
$\Sigma$, $\lambda$, $r$, and $k$. We define the associated candidate
equilibrium strategy by
$
\widehat{\alpha}_t^*=k(t)e_t.
$
Before applying the verification argument, we must establish that $\widehat{\alpha}^*$ belongs to the admissible class. 
%The preceding result determines a unique candidate trading strategy $\widehat{\alpha}_t=k(t)e_t$. 
%Before applying the verification argument, we must establish that this strategy belongs to the admissible class. 
This is not immediate because $k(t)$ diverges as $t\uparrow T$. The next proposition shows that the simultaneous decay of the pricing error offsets this singularity.

\begin{proposition}[Admissibility of the candidate strategy]
\label{thm:admissibility}

Let $\widehat{\alpha}^*_s=k(s)e_s$ with $k$ defined in \eqref{eq:explicit_A_k}. For every initial state $(t,e,w)$ with $t<T$, the corresponding state equation is well posed on $[t,T)$, the pricing error satisfies $e_s\to0$ as $s\uparrow T$, and
\begin{equation}
    \int_t^T
    |\widehat{\alpha}^*_s|\,\mathrm{d}s
    <\infty
    \qquad
    \mathbb{P}\text{-a.s.}
    \label{eq:strategy_L1}
\end{equation}
Moreover, for every $p\geq1$,
\begin{equation}
    \mathbb{E}_{t,e,w}
    \left[
        \sup_{t\leq s\leq T}
        |W_s^{\widehat{\alpha}^*}|^p
    \right]
    <\infty.
    \label{eq:wealth_Lp}
\end{equation}
In particular, $W_T^{\widehat{\alpha}^*}\in L^2$ and satisfies
\begin{equation}
    \mathbb{E}_{t,e}
    \left[
        |W_T^{\widehat{\alpha}^*}-w|^2
    \right]
    \leq
    \sigma^4
    \left(
        \int_t^T\lambda(s)\,\mathrm{d}s
    \right)^2
    \left(
        3+\frac{6e^2}{\Sigma(t)}
        +\frac{e^4}{\Sigma(t)^2}
    \right).
    \label{eq:wealth_L2_bound}
\end{equation}
Therefore, $\widehat{\alpha}^*$ belongs to the admissible class introduced in Section~\ref{sec:model}, and admissibility is preserved under the bounded constant pasting
deviations in Definition \ref{def:insider_equilibrium}.
\end{proposition}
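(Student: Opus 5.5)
The plan is to solve the closed-loop state equation explicitly. Along the candidate strategy the pricing error evolves as $\mathrm{d}e_s=-\lambda(s)\bigl(k(s)e_s\,\mathrm{d}s+\sigma\,\mathrm{d}B_s\bigr)$. By \eqref{eq:variance_equation}, $\lambda k=-\Sigma'/\Sigma$, so the equation is linear:
\[
\mathrm{d}e_s=\frac{\Sigma'(s)}{\Sigma(s)}e_s\,\mathrm{d}s-\sigma\lambda(s)\,\mathrm{d}B_s .
\]
Its coefficients are continuous on $[t,T)$, and by Theorem~\ref{thm:explicit_solution} $\Sigma>0$ there, so the equation is well posed on $[t,T)$. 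Variation of constants gives
\[
\frac{e_s}{\Sigma(s)}=\frac{e}{\Sigma(t)}+M_s,\qquad M_s:=-\sigma\int_t^s\frac{\lambda(u)}{\Sigma(u)}\,\mathrm{d}B_u .
\]
Here $M$ is a continuous Gaussian martingale. Since $\sigma^2\lambda^2=-\Sigma'$, its quadratic variation is
\[
\langle M\rangle_s=\int_t^s\frac{-\Sigma'(u)}{\Sigma(u)^2}\,\mathrm{d}u=\frac{1}{\Sigma(s)}-\frac{1}{\Sigma(t)}=:\tau(s)\uparrow\infty .
\]
Everything below rests on the representation $e_s=\Sigma(s)\bigl(e/\Sigma(t)+M_s\bigr)$ and $\widehat\alpha^*_s=k(s)e_s=\sigma^2\lambda(s)\bigl(e/\Sigma(t)+M_s\bigr)$.

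\emph{Convergence of $e_s$.} By Dambis--Dubins--Schwarz, $M_s=\beta_{\tau(s)}$ for a Brownian motion $\beta$. Since $\Sigma(s)\le 1/\tau(s)$, the strong law $\beta_\tau/\tau\to0$ gives $\Sigma(s)M_s\to0$ a.s. Together with $\Sigma(T)=0$, this yields $e_s\to0$.

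\emph{Integrability (\eqref{eq:strategy_L1}).} From Theorem~\ref{thm:explicit_solution}, $\lambda$ is bounded on $[0,T]$ by $b^*\chi(0)^{-3/5}$. Also $\Sigma(s)\asymp(T-s)$, because $\lambda(T-)=b^*>0$ and $\Sigma(s)\sim\sigma^2 b^{*2}(T-s)$. It therefore suffices to show $\int_t^T|M_s|\,\mathrm{d}s<\infty$. The law of the iterated logarithm for $\beta$ gives $|M_s|\le C\sqrt{\tau(s)\log\log\tau(s)}\lesssim (T-s)^{-1/2-\delta}$ near $T$, which is integrable. Alternatively, one can note that $\mathbb{E}\int_t^T|M_s|\,\mathrm{d}s\le\int_t^T\Sigma(s)^{-1/2}\,\mathrm{d}s<\infty$.

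\emph{Wealth moments.} Along the strategy, $\mathrm{d}W_s=e_s\widehat\alpha^*_s\,\mathrm{d}s=\sigma^2\lambda(s)\,e_s^2/\Sigma(s)\,\mathrm{d}s\ge0$. Hence $W$ is nondecreasing and $\sup_s|W_s|\le|w|+(W_T-w)$. Write $X_s:=e_s^2/\Sigma(s)=\Sigma(s)\bigl(a+M_s\bigr)^2$ with $a=e/\Sigma(t)$. Then Minkowski's integral inequality gives
\[
\|W_T-w\|_{L^p}\le\sigma^2\int_t^T\lambda(s)\|X_s\|_{L^p}\,\mathrm{d}s .
\]
Since $M_s\sim\mathcal N(0,\tau(s))$ and $\Sigma(s)\tau(s)\le1$, we get $\|X_s\|_{L^p}\le C_p(\Sigma(s)a^2+1)$ uniformly in $s$, which proves \eqref{eq:wealth_Lp}. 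For $p=2$ the Gaussian fourth moment is explicit:
\[
\mathbb{E}X_s^2=\Sigma(s)^2\bigl(a^4+6a^2\tau(s)+3\tau(s)^2\bigr)\le\frac{e^4}{\Sigma(t)^2}+\frac{6e^2}{\Sigma(t)}+3 ,
\]
using $\Sigma(s)\le\Sigma(t)$ and $\Sigma(s)\tau(s)\le1$. Squaring the Minkowski bound then gives exactly \eqref{eq:wealth_L2_bound}.

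\emph{Pasting deviations.} Under $a\otimes_{t,\varepsilon}\widehat\alpha^*$, on $[t,t+\varepsilon)$ the error $e$ is Gaussian with a bounded drift, so it has moments of all orders, and the wealth increment over this interval is bounded in $L^p$. After $t+\varepsilon$ the candidate strategy restarts from the random state $e_{t+\varepsilon}$. Applying the preceding bounds conditionally on $\mathcal F_{t+\varepsilon}$ and then integrating, using the polynomial dependence on $e_{t+\varepsilon}$ and $\Sigma(t+\varepsilon)>0$, gives \eqref{eq:strategy_L1} and the $L^2$ bound. No exponential moment is needed since $\rho=0$.

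The step I expect to be the main obstacle is the pathwise control near $T$, namely $e_s\to0$ and $\int|\widehat\alpha^*|<\infty$, because $k$ blows up like $(T-s)^{-1}$. The time change of $M$ together with the law of large numbers and the LIL handles it, relying on the two-sided estimate $\Sigma\asymp(T-s)$ from Theorem~\ref{thm:explicit_solution}.
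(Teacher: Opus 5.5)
Your proposal is correct and follows essentially the same route as the paper: you write $e_s/\Sigma(s)=e/\Sigma(t)+M_s$ with $\langle M\rangle_s=1/\Sigma(s)-1/\Sigma(t)$, use Dambis--Dubins--Schwarz and the strong law to get $e_s\to0$, bound the Gaussian moments of $e_s^2/\Sigma(s)$ (the paper's $\widetilde e_s^{\,2}$), and use monotonicity of wealth and a restart from $(e_{t+\varepsilon},W_{t+\varepsilon})$ to handle pasted deviations. The differences are cosmetic: you use Minkowski's integral inequality where the paper uses Cauchy--Schwarz and Jensen, and you bound $\int\lambda/\sqrt{\Sigma}$ via $\Sigma\asymp T-s$ instead of the paper's substitution $x=\Sigma(s)$; the local square-integrability you omit is immediate from path continuity and boundedness of $k$ on compact subintervals of $[t,T)$.
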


Proposition~\ref{thm:admissibility} resolves the apparent conflict between the singular trading intensity and admissibility. As the terminal time approaches, the informed trader trades at an increasing rate, but the informational advantage $e_t=V-P_t$ vanishes simultaneously. The resulting cumulative order and terminal wealth remain well defined.

Having established admissibility of $\widehat{\alpha}^*$, it remains to verify that the candidate obtained from the extended HJB system satisfies the local equilibrium condition. This step is essential because the singular constraint \eqref{eq:singular_constraint} is a first-order condition and does not by itself imply that the candidate is a weak equilibrium.

%We henceforth denote the functions determined in Theorem~\ref{thm:explicit_solution} by $\lambda^*$ and $k^*$. 
Define the corresponding pricing rule and trading strategy by
\begin{equation}
    H^*(t,\xi)
    =
    \mu+\xi,
    \qquad
    P_t^*
    =
    H^*(t,\xi_t^*)
    =
    \mu+\xi_t^*,
    \qquad
    \widehat{\alpha}_t^*
    =
    k(t)(V-P_t^*),
    \label{eq:equilibrium_price_strategy}
\end{equation}
where $\xi^*_t=\int_0^t\lambda(s)\,\mathrm{d}Y_s^*$ and the equilibrium order flow $Y^*$ is generated by $\mathrm{d}Y_t^*=\widehat{\alpha}_t^*\,\mathrm{d}t+\sigma\,\mathrm{d}B_t$.

\begin{theorem}[Linear Gaussian weak equilibrium]
\label{thm:main_equilibrium}

The pricing rule and trading strategy in \eqref{eq:equilibrium_price_strategy} form a weak market equilibrium. The price is rational:
\begin{equation}
    P_t^*
    =
    \mathbb{E}
    \left[
        V\mid\mathcal{F}_t^{Y^*}
    \right],
    \qquad
    0\leq t<T,
    \label{eq:main_rational_pricing}
\end{equation}
and $\operatorname{Var}(V\mid\mathcal{F}_t^{Y^*})=\Sigma(t)$. Moreover, the informed trading is inconspicuous, $\mathbb{E}[\widehat{\alpha}_t^*\mid\mathcal{F}_t^{Y^*}]=0$, and full revelation holds:
\begin{equation}
    P_T^*=V
    \qquad
    \mathbb{P}\text{-a.s.}
    \label{eq:main_full_revelation}
\end{equation}

The functions $g(t,e,w)=w+m(t,e)$ and $U(t,e,w)=w+u(t,e)$, with $m$ and $u$ defined in \eqref{eq:explicit_m_u}, satisfy
\[
    g(t,e,w)
    =
    \mathbb{E}_{t,e,w}
    \left[
        W_T^{\widehat{\alpha}^*}
    \right],
    \qquad
    U(t,e,w)
    =
    \mathbb{E}_{t,e,w}
    \left[
        W_T^{\widehat{\alpha}^*}
    \right]
    -
    \frac{\eta}{2}
    \operatorname{Var}_{t,e,w}
    \left(
        W_T^{\widehat{\alpha}^*}
    \right).
\]
Finally, for every $t<T$, every state $(e,w)$, and every constant $a\in \R$,
\begin{equation}
    \limsup_{\varepsilon\downarrow0}
    \frac{
        J(t,e,w;a\otimes_{t,\eps}\widehat\alpha^* )
        -
        J(t,e,w;\widehat{\alpha}^*)
    }{\varepsilon}
    \leq0.
    \label{eq:main_weak_equilibrium}
\end{equation}
\end{theorem}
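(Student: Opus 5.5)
The proof splits into three parts: rational pricing and full revelation, identification of $g$ and $U$ as the equilibrium mean and mean--variance value, and the local equilibrium inequality.

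\emph{Rational pricing.} Under $\widehat\alpha^*$ the order flow is $\mathrm{d}Y^*_t=k(t)(V-P^*_t)\,\mathrm{d}t+\sigma\,\mathrm{d}B_t$ with $\mathrm{d}P^*_t=\lambda(t)\,\mathrm{d}Y^*_t$. On $[0,T-\delta]$ the coefficient $k$ is bounded, so the Kalman--Bucy filter \eqref{eq:filtering_equations} applies. The Riccati solution started from $\Sigma_0$ coincides with the explicit $\Sigma$ of Theorem~\ref{thm:explicit_solution}, because $\Sigma'=-\sigma^2\lambda^2=-k^2\Sigma^2/\sigma^2$ by \eqref{eq:filtering_consistency}. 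By induction on the linear structure (or uniqueness of the filter SDE), $\widehat V_t$ and $P^*_t$ solve the same linear SDE driven by $Y^*$ with the same initial value $\mu$, hence $\widehat V_t=P^*_t$ for $t<T$. Inconspicuousness is then $k(t)(\widehat V_t-P^*_t)=0$. For full revelation I will use $\mathbb{E}[(V-P^*_t)^2]=\Sigma(t)\to0$ and the a.s.\ convergence $e_t\to0$ from Proposition~\ref{thm:admissibility}, which together give $P^*_T=V$.

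\emph{Identification of $g$ and $U$.} Fix $v$ and apply It\^o's formula to $m(s,e_s)$ and $u(s,e_s)$ along $\mathrm{d}e_s=-\lambda(s)k(s)e_s\,\mathrm{d}s-\sigma\lambda(s)\,\mathrm{d}B_s$ on $[t,T-\delta]$. By \eqref{eq:risk_neutral_coefficient_equations}, $m(s,e_s)+\int_t^s k e^2\,\mathrm{d}r$ is a local martingale. Its bounded-moment control via \eqref{eq:wealth_Lp} makes it a true martingale. Letting $\delta\to0$ and using $e_T=0$ together with $C(T)=0$ gives $g=\mathbb{E}[W_T]$. For the variance, $M_s:=\mathbb{E}_s[W_T]=W_s+m(s,e_s)$ has $\mathrm{d}M_s=m_e\,\mathrm{d}e=-2A\sigma\lambda e_s\,\mathrm{d}B_s$, so $\operatorname{Var}_t(W_T)=\mathbb{E}_t\int_t^T 4A^2\sigma^2\lambda^2e_s^2\,\mathrm{d}s$. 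Applying It\^o to $u$ and using the $u$-equation (whose drift is exactly $-\tfrac{\eta}{2}\sigma^2\lambda^2m_\xi^2$ minus the running profit) gives $u=m-\tfrac{\eta}{2}\operatorname{Var}$. Here $u_e\lambda=e$ encodes the singular constraint, so the $k e^2$ terms cancel against the profit.

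\emph{Local equilibrium inequality.} For the deviation $a\otimes_{t,\varepsilon}\widehat\alpha^*$, condition on $\mathcal{F}_{t+\varepsilon}$. The tower property gives $\mathbb{E}[W_T]=\mathbb{E}[g(t+\varepsilon,e_{t+\varepsilon},W_{t+\varepsilon})]$. The law of total variance gives
\[
\operatorname{Var}(W_T)=\mathbb{E}\bigl[\operatorname{Var}_{t+\varepsilon}(W_T)\bigr]+\operatorname{Var}\bigl(g(t+\varepsilon,\cdot)\bigr),
\]
so that
\[
J^\varepsilon=\mathbb{E}\bigl[U(t+\varepsilon,\cdot)\bigr]-\tfrac{\eta}{2}\operatorname{Var}\bigl(g(t+\varepsilon,\cdot)\bigr).
\]
Expanding by It\^o under constant rate $a$ (which is smooth and bounded on $[t,t+\varepsilon]$, $t<T$) yields
\[
J^\varepsilon-U(t,e,w)=\varepsilon\Bigl[\mathcal{L}^aU-\tfrac{\eta}{2}\sigma^2\lambda^2 m_\xi^2\Bigr]+o(\varepsilon).
\]
Here the variance term is $\varepsilon\sigma^2\lambda^2g_\xi^2+o(\varepsilon)$, since the drift contributes $O(\varepsilon^2)$. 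By \eqref{eq:extended_hjb_general} with the singular constraint, the bracket equals $0$ for every $a$. The same expansion with $a=\widehat\alpha^*(t,e)$ gives $J(\widehat\alpha^*)=U$, so the limsup is $0\le0$.

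\emph{Main obstacle.} The hardest step is justifying that the $o(\varepsilon)$ remainders are uniform. After $t+\varepsilon$ the strategy reverts to $\widehat\alpha^*$, whose coefficient $k$ blows up at $T$. I would control this using that $g,U$ are explicit quadratics in $e$ with coefficients bounded on $[t,t+\varepsilon_0]$, combined with Gaussian moment bounds for $e_{t+\varepsilon}$. In parallel, the martingale/limit argument as $\delta\to0$ in the identification step must rely on \eqref{eq:wealth_Lp} and \eqref{eq:wealth_L2_bound}.
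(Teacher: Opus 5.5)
Your proposal is correct and follows the paper's overall architecture: localized Kalman--Bucy filtering with Riccati uniqueness and a linear equation for $\widehat V-P^*$; identification of the continuation functions by It\^o's formula and a passage $t_n\uparrow T$; and reduction of the deviation to $[t,t+\varepsilon]$ via the tower property and It\^o expansions under the constant action. Where you differ is in how you handle the second moment.

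\textbf{How the paper handles the variance.} The paper introduces the second-moment candidate $Q=g^2+\frac{2}{\eta}(g-U)$ and shows $\mathcal{L}^{\widehat\alpha^*}Q=0$. It then identifies $Q$ with $\mathbb{E}_{t,e,w}[(W_T^{\widehat\alpha^*})^2]$ through uniform integrability of $Q(t_n,e_{t_n},W_{t_n})$, which needs $L^2$ bounds on a quartic function of the state. In the deviation step it expands $J^\varepsilon-U=(1+\eta g)\Delta_g-\frac{\eta}{2}\Delta_Q+\frac{\eta}{2}\Delta_g^2$.

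\textbf{How you handle it.} You compute $\operatorname{Var}_t(W_T)$ as the expected quadratic variation of the $L^2$ martingale $M_s=W_s+m(s,e_s)$, which is closed by $W_T$. You then apply It\^o's formula directly to $u$. Since both integrands $k e_s^2$ and $\sigma^2\lambda^2 m_e^2$ are nonnegative, the limit $t_n\uparrow T$ follows from monotone convergence together with $u(t_n,e_{t_n})\to0$ in $L^1$. This is somewhat more economical than the paper's uniform-integrability argument for $Q$.

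\textbf{The deviation step.} Your law-of-total-variance form $J^\varepsilon=\mathbb{E}[U(t+\varepsilon,\cdot)]-\frac{\eta}{2}\operatorname{Var}(g(t+\varepsilon,\cdot))$ is the standard Bj\"ork--Murgoci formulation. It is algebraically identical to the paper's $Q$-expansion, because $\mathbb{E}Q=\mathbb{E}g^2+\frac{2}{\eta}\mathbb{E}(g-U)$. Both routes lead to the same limit $\mathcal{L}^aU-\frac{\eta}{2}\sigma^2\lambda^2g_e^2$, which vanishes by the reduced HJB system and the singular constraint.

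\textbf{Two small corrections.}
\begin{enumerate}
\item Along the candidate, the drift of $u(s,e_s)$ is $+\frac{\eta}{2}\sigma^2\lambda^2m_e^2-k e_s^2$, not $-\frac{\eta}{2}\sigma^2\lambda^2m_e^2-ke_s^2$. It is the positive sign that produces $u=m-\frac{\eta}{2}\operatorname{Var}$; with your stated sign you would get $m+\frac{\eta}{2}\operatorname{Var}$.
\item The identity $J(t,e,w;\widehat\alpha^*)=U(t,e,w)$ should be quoted exactly from your identification step. It does not come from the deviation expansion with $a=\widehat\alpha^*(t,e)$: that pasted strategy is not $\widehat\alpha^*$, and the expansion only holds up to $o(\varepsilon)$.
\end{enumerate}
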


Theorem~\ref{thm:main_equilibrium} completes the verification of the candidate. The strategy $\widehat{\alpha}^*$ satisfies the informed trader's intrapersonal equilibrium condition, while the pricing rule satisfies rational pricing for the order flow generated by that strategy. The proofs of Theorem~\ref{thm:explicit_solution}, Proposition~\ref{thm:admissibility}, and Theorem~\ref{thm:main_equilibrium} are deferred to the risk-neutral part of Section~\ref{sec:proof}.

\subsubsection{The risk-averse case}

Throughout this subsubsection, we assume $\rho,\eta>0$. The risk-averse coupled system \eqref{eq:coupled_system} with the terminal condition \eqref{eq:coupled_system.termial} is characterized through the universal initial-value problem on $[0,\infty)$:
%Throughout this subsubsection, we assume $\rho,\eta>0$. The risk-averse coupled system is characterized through the universal initial-value problem
\begin{equation}
    \left\{
    \begin{aligned}
        \widehat{\Lambda}'(y)
        &=
        \rho\widehat{S}(y)^2+\eta\widehat{R}(y)^2,\\
        \widehat{R}'(y)
        &=
        \frac{
            \bigl(
                \rho\widehat{S}(y)^2
                +
                \eta\widehat{R}(y)^2
            \bigr)
            \widehat{R}(y)
        }
        {\widehat{\Lambda}(y)}
        +
        \frac{2\bigl(1-\widehat{R}(y)\bigr)}{y},\\
        \widehat{S}'(y)
        &=
        \frac{2\bigl(1-\widehat{S}(y)\bigr)}{y}
        -
        \frac{
            \rho\widehat{S}(y)^2
            \bigl(1-\widehat{S}(y)\bigr)
        }
        {\widehat{\Lambda}(y)}
        +
        \frac{
            \eta\widehat{R}(y)^2\widehat{S}(y)
        }
        {\widehat{\Lambda}(y)},
    \end{aligned}
    \right.
    \qquad
    \widehat{\Lambda}(0)
    =
    \widehat{R}(0)
    =
    \widehat{S}(0)
    =
    1.
    \label{eq:universal_system}
\end{equation}
Although the equations for $\widehat{R}$ and $\widehat{S}$ appear singular at $y=0$, the result below establishes that \eqref{eq:universal_system} admits a unique global positive solution, which leads to the existence of solution to \eqref{eq:coupled_system}.
%Although the equations for $\widehat{R}$ and $\widehat{S}$ appear singular at $y=0$, the result below establishes that \eqref{eq:universal_system} admits a unique global positive solution. In terms of this solution, define, for $b>0$, the shooting map
%\begin{equation}
%   \mathcal{T}(b)  :=  \frac{1}{\sigma^2b}   \int_0^{\Sigma_0/b}  \frac{1}{\widehat{\Lambda}(y)^2}\,\mathrm{d}y. \label{eq:risk_averse_shooting_map}
%\end{equation}

\begin{theorem}[The risk-averse coefficient system]
\label{thm:risk_averse_coefficient_system}

System \eqref{eq:universal_system} admits a unique global solution $(\widehat{\Lambda},\widehat{R},\widehat{S})\in C^1([0,\infty);(0,\infty)^3)$, where the differential equations hold on $(0,\infty)$ and the derivatives extend continuously to $y=0$. Then, the shooting map defined by
\begin{equation}
\mathcal{T}(b):=\frac{1}{\sigma^2b}\int_0^{\Sigma_0/b}\frac{1}{\widehat{\Lambda}(y)^2}\,\mathrm{d}y.
\label{eq:risk_averse_shooting_map}
\end{equation}
%Moreover, the shooting map $\mathcal{T}$ defined in \eqref{eq:risk_averse_shooting_map} 
is continuous and strictly
decreasing on $(0,\infty)$, with $\mathcal{T}(b)\to\infty$ as
$b\downarrow0$ and $\mathcal{T}(b)\to0$ as $b\uparrow\infty$.
Consequently, there exists a unique $b^*>0$ such that
\begin{equation}
    \mathcal{T}(b^*)=T.
    \label{eq:risk_averse_shooting_equation}
\end{equation}
For this value of $b^*$, let $\Sigma$ be determined implicitly by
\begin{equation}
    T-t
    =
    \frac{1}{\sigma^2(b^*)^2}
    \int_0^{\Sigma(t)}
    \frac{1}
    {\widehat{\Lambda}\bigl(x/b^*\bigr)^2}
    \,\mathrm{d}x,
    \qquad
    0\leq t\leq T,
    \label{eq:risk_averse_variance_representation}
\end{equation}
and define
\begin{equation}
    \lambda(t)
    =
    b^*\widehat{\Lambda}
    \left(
        \frac{\Sigma(t)}{b^*}
    \right),
    \qquad
    r(t)
    =
    \widehat{R}
    \left(
        \frac{\Sigma(t)}{b^*}
    \right),
    \qquad
    s(t)
    =
    \widehat{S}
    \left(
        \frac{\Sigma(t)}{b^*}
    \right).
    \label{eq:risk_averse_coefficient_representation}
\end{equation}
Then $(\Sigma,\lambda,r,s)$ is the unique solution of \eqref{eq:coupled_system} within the nondegenerate terminal class satisfying $\lambda(T-)\in(0,\infty)$. In particular, $\Sigma(0)=\Sigma_0$, $\Sigma(T)=0$, $\lambda(T-)=b^*$, and $r(T-)=s(T-)=1$. The functions $\lambda$, $r$, and $s$ remain finite and strictly positive on $[0,T]$.
\end{theorem}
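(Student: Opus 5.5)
The proof has three parts. First, reparametrize \eqref{eq:coupled_system} by the posterior variance. Second, solve the resulting singular initial-value problem \eqref{eq:universal_system} globally. Third, analyze the shooting map.

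\textbf{Reduction.} Since $\Sigma'=-\sigma^2\lambda^2<0$, $\Sigma$ is strictly decreasing and can serve as a new time variable. Set $y=\Sigma(t)/b$ and
\[
\lambda(t)=b\widehat\Lambda(y),\qquad r(t)=\widehat R(y),\qquad s(t)=\widehat S(y).
\]
Then $\mathrm{d}y/\mathrm{d}t=-\sigma^2 b\widehat\Lambda^2$ and $\Sigma'/\Sigma=-\sigma^2 b\widehat\Lambda^2/(by)$. A direct chain-rule computation turns each equation of \eqref{eq:coupled_system} into the corresponding equation of \eqref{eq:universal_system}. The terminal conditions \eqref{eq:coupled_system.termial} become $\widehat\Lambda(0)=\widehat R(0)=\widehat S(0)=1$. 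The parameter $b$ drops out completely, which is why the system is universal.

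Conversely, take a global positive solution of \eqref{eq:universal_system}. Integrating $\mathrm{d}t=-\mathrm{d}\Sigma/(\sigma^2 b^2\widehat\Lambda(\Sigma/b)^2)$ from $T$ gives \eqref{eq:risk_averse_variance_representation}. The requirement $\Sigma(0)=\Sigma_0$ is exactly $\mathcal T(b)=T$.

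Uniqueness within the nondegenerate class then follows from two facts:
\begin{itemize}
\item any solution of \eqref{eq:coupled_system} with $\lambda(T-)=b\in(0,\infty)$ forces $r(T-)=s(T-)=1$, as argued before the theorem;
\item hence it maps to a solution of \eqref{eq:universal_system}, and uniqueness there applies.
\end{itemize}

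\textbf{Local existence at $y=0$.} This is the main obstacle, because of the $2(1-\widehat R)/y$ and $2(1-\widehat S)/y$ singularities. Substitute $\widehat R=1+y\,p(y)$ and $\widehat S=1+y\,q(y)$. The singular terms become $-2p$ and $-2q$, so
\[
(yp)'=F_R-2p,\qquad\text{i.e.}\qquad p'+\frac{3p}{y}=\frac{F_R}{y},
\]
with $F_R$ smooth. Equivalently,
\[
p(y)=y^{-3}\int_0^y x^{2}F_R(x,\dots)\,\mathrm{d}x,
\]
and similarly for $q$. This integral map is a contraction on $C([0,\delta])$ for small $\delta$, because the kernel $y^{-3}x^2$ has integral $1/3$ and $F_R$ is Lipschitz near $(1,1,1)$. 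The contraction gives local existence and uniqueness. It also gives $p(0)=F_R(0)/3$, so the derivatives extend continuously to $y=0$.

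\textbf{Global extension.} I will use a priori bounds to rule out blow-up and loss of positivity.
\begin{itemize}
\item \emph{Bounds on $\widehat\Lambda$.} Since $\widehat\Lambda'\ge0$, we have $\widehat\Lambda\ge1$, so the $1/\widehat\Lambda$ terms stay harmless.
\item \emph{Bounds on $\widehat S$.} At $\widehat S=0$ we get $\widehat S'=2/y>0$. Near $\widehat S=1$ the right side is $\eta\widehat R^2/\widehat\Lambda>0$, so $\widehat S$ crosses $1$ only upward. I expect to show $\widehat S\in(0,1]$ or a similar invariant band; otherwise I will bound $\widehat S$ by comparison.
\item \emph{Bounds on $\widehat R$.} Positivity holds since $\widehat R'=2/y>0$ at $\widehat R=0$. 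An upper bound is harder because of the $\widehat R^3/\widehat\Lambda$ term. For this I will use the combination $\widehat R/\widehat\Lambda$, i.e.\ $A$ up to constants. Its derivative is $2(1-\widehat R)/(y\widehat\Lambda)$, so $\widehat R/\widehat\Lambda$ decreases once $\widehat R>1$, which gives $\widehat R\le C\widehat\Lambda$.
\item \emph{Growth of $\widehat\Lambda$.} With $\widehat R\le C\widehat\Lambda$, one still needs $\widehat\Lambda$ not to blow up in finite $y$. In the risk-neutral analogue, $\widehat\Lambda$ grows only polynomially. Here I will use comparison together with $\widehat S$ bounded, and aim to show that $\widehat R$ stays bounded by $\max(1,\cdot)$ via the decreasing ratio. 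Then $\widehat\Lambda'$ is bounded and $\widehat\Lambda$ is at most linear.
\end{itemize}
Standard continuation then yields a unique global solution.

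\textbf{Shooting map.} Substitute $x=by$ in \eqref{eq:risk_averse_shooting_map}:
\[
\mathcal T(b)=\frac{1}{\sigma^2b^2}\int_0^{\Sigma_0}\widehat\Lambda(x/b)^{-2}\,\mathrm{d}x .
\]
As $b$ increases, $x/b$ decreases, so $\widehat\Lambda(x/b)^{-2}$ increases. That competes with the prefactor, so instead I use the original form: $\mathcal T(b)=b^{-1}\sigma^{-2}\,G(\Sigma_0/b)$ with $G(z)=\int_0^z\widehat\Lambda^{-2}$. Then
\[
\frac{\mathrm{d}}{\mathrm{d}b}\bigl(b\,\mathcal T\bigr)\sigma^2=-\frac{\Sigma_0}{b^2}\,\widehat\Lambda(\Sigma_0/b)^{-2}<0,
\]
so $b\mathcal T$ is decreasing, and $b^{-1}$ is decreasing as well. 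Hence $\mathcal T$ is a product of positive decreasing functions and is strictly decreasing. Continuity is clear. For the limits:
\begin{itemize}
\item as $b\to\infty$, $G(\Sigma_0/b)\le\Sigma_0/b$, so $\mathcal T\le \Sigma_0/(\sigma^2b^2)\to0$;
\item as $b\downarrow0$, $G(\Sigma_0/b)\ge G(1)>0$, so $\mathcal T\ge G(1)/(\sigma^2 b)\to\infty$.
\end{itemize}
The intermediate value theorem gives the unique $b^*$. Positivity and finiteness of $\lambda,r,s$ on $[0,T]$ follow from the global bounds.
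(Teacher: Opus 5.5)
\textbf{There is a genuine gap in the global extension step.} Your reduction, the local contraction at $y=0$ (your $p=y^{-3}\int_0^y x^2F_R\,\mathrm{d}x$ is the paper's Volterra form), and the shooting-map argument are fine and match the paper up to presentation. One slip: $\Sigma'/\Sigma=-\sigma^2 b\widehat\Lambda^2/y$, and with your extra factor $1/b$ the parameter would not drop out. The real problem is that your a priori bounds do not exclude blow-up of $\widehat\Lambda$ at finite $y$.

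\begin{itemize}
\item The band $\widehat S\in(0,1]$ is false. We have $\widehat S(y)=1+\eta y/3+o(y)$, and, as you observe, $\widehat S$ can cross $1$ only upward, so $\widehat S>1$ on the whole existence interval. There the term $\rho\widehat S^2(\widehat S-1)/\widehat\Lambda$ is positive and cubic.
\item The ratio trick gives only that $\widehat R/\widehat\Lambda$ and $\widehat S/\widehat\Lambda$ are nonincreasing; note $(\widehat S/\widehat\Lambda)'=2(1-\widehat S)/(y\widehat\Lambda)-\rho\widehat S^2/\widehat\Lambda^2$. Hence $\widehat R,\widehat S\le\widehat\Lambda$. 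This yields only $\widehat\Lambda'\le(\rho+\eta)\widehat\Lambda^2$, which controls $\widehat\Lambda$ only for $y<1/(\rho+\eta)$.
\item Your step ``$\widehat R$ stays bounded via the decreasing ratio, so $\widehat\Lambda$ is at most linear'' has no mechanism behind it. The facts you invoke cannot supply one: none of them uses $\rho>0$. At $\rho=0$ they all hold, yet the solution is $\widehat R=\widehat S=1+\eta y/3$ and $\widehat\Lambda=(1+\eta y/3)^3$, so $\widehat R$ is unbounded and $\widehat\Lambda$ is cubic.
\end{itemize}

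\textbf{What is missing is a structural argument that $1/\widehat\Lambda$ never reaches zero.} The paper sets $y=e^z$ and
\[
\pi=y/\widehat\Lambda,\qquad \zeta=y\widehat R/\widehat\Lambda,\qquad \psi=y\widehat S/\widehat\Lambda .
\]
This gives the autonomous polynomial system
\[
\pi_z=\pi-\rho\psi^2-\eta\zeta^2,\qquad \zeta_z=2\pi-\zeta,\qquad \psi_z=2\pi-\psi-\rho\psi^2,
\]
whose trajectory issues from the origin at $z=-\infty$. Finite-$y$ blow-up of $\widehat\Lambda$ is exactly $\pi$ hitting $0$ at finite $z$.

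The paper rules this out with the Lyapunov function
\[
\mathcal H=\pi^2-\zeta\pi+\tfrac{\eta}{3}\zeta^3+\tfrac{\rho}{3}\psi^3+\tfrac12(\zeta-\psi)^2 .
\]
It satisfies $\mathrm{d}\mathcal H/\mathrm{d}z=-(\zeta-\psi-\rho\psi^2)^2\le0$, and $\mathcal H\to0$ as $z\to-\infty$, so $\mathcal H\le0$. On the other hand $\mathcal H(0,\zeta,\psi)>0$, because $\zeta=2\int_{-\infty}^z e^{-(z-s)}\pi(s)\,\mathrm{d}s>0$ up to the first zero of $\pi$. So $\pi$ never vanishes.

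The ordering $\pi\le\psi\le\zeta$, together with $\pi\le 1/(\rho+\eta)$, then bounds the trajectory and gives existence for all $z$. Consequently $\widehat\Lambda=y/\pi$, $\widehat R=\zeta/\pi$ and $\widehat S=\psi/\pi$ are finite and positive for every $y>0$.

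Without an ingredient of this kind, your continuation step is unproved. So are the definition of $\mathcal T(b)$ for all $b>0$, which needs $\widehat\Lambda$ on all of $[0,\infty)$, and the finiteness of $\lambda$, $r$ and $s$ on $[0,T]$.
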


\begin{remark}
The uniqueness result in Theorem~\ref{thm:risk_averse_coefficient_system} concerns the coefficient system within the Gaussian--linear ansatz and the stated nondegenerate terminal class. It does not assert uniqueness of weak market equilibrium among arbitrary pricing rules and trading strategies.
\end{remark}

Unlike the closed-form solution in Theorem~\ref{thm:explicit_solution}, Theorem~\ref{thm:risk_averse_coefficient_system} characterizes the price impact and conditional variance through the universal functions $(\widehat{\Lambda},\widehat{R},\widehat{S})$. Once the unique shooting parameter has been determined, the remaining coefficients follow directly from the dimensionless variables in \eqref{eq:dimensionless_variables} and the filtering consistency condition \eqref{eq:filtering_consistency}:
\begin{equation}
    A(t)
    =
    \frac{r(t)}{2\lambda(t)},
    \qquad
    \Gamma(t)
    =
    \frac{s(t)}{2\lambda(t)},
    \qquad
    k(t)
    =
    \frac{\sigma^2\lambda(t)}{\Sigma(t)},
    \qquad
    0\leq t<T.
    \label{eq:risk_averse_A_B_k}
\end{equation}
It follows from \eqref{eq:A_C_equations}--\eqref{eq:lambda_D_equations} and $C(T)=E(T)=D(T)=0$ that
\begin{equation}
    C(t)
    =
    \int_t^T
    \sigma^2\lambda(s)^2A(s)\,\mathrm{d}s,
    \qquad
    E(t)
    =
    \int_t^T
    \sigma^2\lambda(s)^2\Gamma(s)\,\mathrm{d}s,
    \qquad
    D(t)
    =
    \frac{1}{2}
    \int_t^T
    \sigma^2\lambda(s)\,\mathrm{d}s.
    \label{eq:risk_averse_C_E_D}
\end{equation}
Accordingly, the continuation functions are %2~define h first
\begin{equation}
    m(t,e)
    =
    A(t)e^2+C(t),
    \qquad
    h(t,e)
    =
    \Gamma(t)e^2+E(t),
    \qquad
    u(t,e)
    =
    \frac{e^2}{2\lambda(t)}+D(t).
    \label{eq:risk_averse_continuation_functions}
\end{equation}
The preceding result determines the coefficient functions
$\Sigma$, $\lambda$, $r$, $s$, and $k$. We define the associated candidate strategy by $\widehat{\alpha}^*_t=k(t)e_t$. Its singular trading intensity requires the same terminal control as in the risk-neutral case. The entropic certainty equivalent introduces the additional requirement that terminal wealth possess the appropriate exponential moment.

\begin{proposition}[Admissibility of the risk-averse candidate]
\label{prop:risk_averse_admissibility}

Let $\widehat{\alpha}^*_s=k(s)e_s$, where $k$ is defined in \eqref{eq:risk_averse_A_B_k}. For every initial state $(t,e,w)$ with $t<T$, the corresponding state equation is well posed on $[t,T)$, the pricing error satisfies $e_s\to0$ as $s\uparrow T$, and
\begin{equation}
    \int_t^T
    |\widehat{\alpha}^*_s|\,\mathrm{d}s
    <\infty
    \qquad
    \mathbb{P}\text{-a.s.}
    \label{eq:risk_averse_absolute_integrability}
\end{equation}
Moreover, for every $p\geq1$,
\begin{equation}
    \mathbb{E}_{t,e,w}
    \left[
        \sup_{t\leq s\leq T}
        |W_s^{\widehat{\alpha}^*}|^p
    \right]
    <\infty,
    \qquad
    \mathbb{E}_{t,e,w}
    \left[
        e^{-\rho W_T^{\widehat{\alpha}^*}}
    \right]
    <\infty.
    \label{eq:risk_averse_wealth_integrability}
\end{equation}
Therefore, $\widehat{\alpha}^*$ belongs to the admissible class introduced in Section~\ref{sec:model}, and admissibility is preserved under the
bounded constant pasting deviations in Definition \ref{def:insider_equilibrium}.
\end{proposition}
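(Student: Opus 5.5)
The plan is to follow the risk-neutral Proposition~\ref{thm:admissibility} closely, replacing the explicit formulas by the structural facts from Theorem~\ref{thm:risk_averse_coefficient_system}. The key fact is that $\lambda$ is continuous, positive and bounded on $[0,T]$, so that $\Sigma(s)\asymp T-s$ near $T$, and $k=\sigma^2\lambda/\Sigma$ satisfies $k(s)\sim 1/(T-s)$ there.

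\textbf{Step 1: closed form for the pricing error.} Given $(t,e,w)$, the pricing error satisfies the linear SDE
\[
\mathrm{d}e_s=-\lambda(s)\,\mathrm{d}Y_s=-\lambda(s)k(s)e_s\,\mathrm{d}s-\sigma\lambda(s)\,\mathrm{d}B_s .
\]
By \eqref{eq:variance_equation}, $\lambda k=-\Sigma'/\Sigma$, so the integrating factor is $\Sigma(s)/\Sigma(t)$ and
\[
e_s=\frac{\Sigma(s)}{\Sigma(t)}\,e-\sigma\,\Sigma(s)\int_t^s\frac{\lambda(u)}{\Sigma(u)}\,\mathrm{d}B_u ,
\]
which is well posed on $[t,T)$. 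The stochastic term is Gaussian with variance $\sigma^2\Sigma(s)^2\int_t^s\lambda^2/\Sigma^2\,\mathrm{d}u$. Using $\sigma^2\lambda^2=-\Sigma'$, this equals $\Sigma(s)^2\bigl(1/\Sigma(s)-1/\Sigma(t)\bigr)\le \Sigma(s)\to0$. This is exactly the Brownian-bridge structure. To get $e_s\to0$ almost surely, and not only in $L^2$, I would time-change via $\tau=1/\Sigma(s)$. The process $M_\tau=\int\lambda/\Sigma\,\mathrm{d}B$ is a Brownian motion in the clock $\langle M\rangle=\sigma^{-2}(1/\Sigma(s)-1/\Sigma(t))$. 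The law of the iterated logarithm then gives $\Sigma(s)M=O\bigl(\sqrt{\Sigma\log\log(1/\Sigma)}\bigr)\to0$ a.s.

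\textbf{Step 2: pathwise integrability.} We have $|\widehat\alpha^*_s|=k(s)|e_s|\le c\,(T-s)^{-1}|e_s|$. Step 1 with the LIL gives $|e_s|\le C(\omega)\sqrt{(T-s)\log\log(1/(T-s))}$ near $T$, which is integrable against $(T-s)^{-1}$, proving \eqref{eq:risk_averse_absolute_integrability}.

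\textbf{Step 3: moments and the exponential moment.} Write $W_s-w=\int_t^s k(u)e_u^2\,\mathrm{d}u\ge0$. Positivity immediately gives $e^{-\rho W_T}\le e^{-\rho w}$, so the exponential moment in \eqref{eq:risk_averse_wealth_integrability} is trivial because profits along the candidate path are nonnegative. For the $p$-th moments, $\sup_s|W_s|\le|w|+\int_t^T k e_u^2\,\mathrm{d}u$. By Minkowski,
\[
\Bigl\|\int_t^T k e_u^2\,\mathrm{d}u\Bigr\|_p\le\int_t^T k(u)\,\|e_u\|_{2p}^2\,\mathrm{d}u .
\]
Since $e_u$ is Gaussian, $\|e_u\|_{2p}^2\le c_p\bigl(\mathbb{E}e_u^2\bigr)$. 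Moreover
\[
\mathbb{E}e_u^2\le 2\,\frac{\Sigma(u)^2}{\Sigma(t)^2}\,e^2+\Sigma(u).
\]
Hence the integrand is bounded by $c_p\sigma^2\lambda(u)\bigl(2e^2\Sigma(u)/\Sigma(t)^2+1\bigr)$, which is integrable on $[t,T]$ because $\lambda$ is bounded. I expect this to mirror the risk-neutral bound \eqref{eq:wealth_L2_bound}.

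\textbf{Step 4: pasting deviations.} For $a\otimes_{t,\varepsilon}\widehat\alpha^*$, on $[t,t+\varepsilon)$ the state $e$ evolves with bounded drift $-\lambda a$, so $e_{t+\varepsilon}$ is Gaussian with all moments finite. The wealth increment there is $\int a\,e_u\,\mathrm{d}u$, which is Gaussian. After $t+\varepsilon$, Steps 1--3 apply conditionally from the random Gaussian state $e_{t+\varepsilon}$. The moment bounds are polynomial in $e_{t+\varepsilon}$, so they integrate. For the exponential moment, the post-deviation profit is again $\ge0$, so $e^{-\rho W_T}\le e^{-\rho(w+\int_t^{t+\varepsilon}a e_u\,\mathrm{d}u)}$, and this has finite expectation because a Gaussian has all exponential moments.

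\textbf{Main obstacle.} The main obstacle is the almost sure decay rate in Steps 1--2, i.e.\ the LIL estimate. If the LIL is awkward in the time-changed form, a fallback is to show $\mathbb{E}\int_t^T k|e_u|\,\mathrm{d}u<\infty$ directly. This holds because
\[
k\,\|e_u\|_2\lesssim (T-u)^{-1}\sqrt{T-u}+(T-u)^{-1}(T-u)=(T-u)^{-1/2}+1,
\]
which is integrable. This fallback gives \eqref{eq:risk_averse_absolute_integrability} without any pathwise rate. The a.s.\ convergence $e_s\to0$ then follows from the martingale convergence of $\Sigma(s)\bigl(e/\Sigma(t)-\sigma M\bigr)$, or equivalently from $e_s=e-\int\lambda\,\mathrm{d}Y$ having a limit together with $L^2$ convergence to $0$.
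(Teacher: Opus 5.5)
Your proposal is correct and is essentially the paper's proof, which transfers Lemmas~\ref{lem:risk_neutral_pricing_error_moments} and~\ref{lem:risk_neutral_wealth_integrability} to the risk-averse coefficients. The shared ingredients are the integrating-factor representation of $e_s$, a Dambis--Dubins--Schwarz time change (the paper uses only the strong law $\widetilde B_\ell/\ell\to0$, not the LIL), your Tonelli ``fallback'' as the actual proof of \eqref{eq:risk_averse_absolute_integrability} via $\int_t^T\lambda(s)\Sigma(s)^{-1/2}\,\mathrm{d}s\le 2\sqrt{\Sigma(t)}/(\sigma^2b^*)$, and nonnegativity of $k(s)e_s^2$ for monotone wealth, the exponential moment and the pasted strategies.

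The one slip is in your fallback for a.s.\ convergence. The process $\Sigma(s)\bigl(e/\Sigma(t)-\sigma M_s\bigr)=e_s$ is not a martingale, and the martingale $e_s/\Sigma(s)$ has quadratic variation $1/\Sigma(s)-1/\Sigma(t)\to\infty$, so it does not converge. Your second justification is valid, however: the limit $\lim_{s\uparrow T}e_s$ exists a.s.\ because $\int_t^T\lambda k|e_u|\,\mathrm{d}u<\infty$ and $\int\lambda\,\mathrm{d}B$ is an $L^2$-bounded martingale, and combined with $e_s\to0$ in $L^2$ this limit must be $0$.
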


The polynomial and exponential estimates in Proposition~\ref{prop:risk_averse_admissibility} ensure that both components of the risk-averse criterion are well defined. It remains to verify that the candidate satisfies the local equilibrium condition rather than merely the first-order constraint derived from the extended HJB system.

%We henceforth denote the functions $\lambda$ and $k$ defined in \eqref{eq:risk_averse_coefficient_representation} and \eqref{eq:risk_averse_A_B_k} by $\lambda^*$ and $k^*$, respectively. 
Define the pricing rule and trading strategy by
\begin{equation}
    H^*(t,\xi)
    =
    \mu+\xi,
    \qquad
    P_t^*
    =
    H^*(t,\xi_t^*)
    =
    \mu+\xi_t^*,
    \qquad
    \widehat{\alpha}_t^*
    =
    k(t)(V-P_t^*),
    \label{eq:risk_averse_equilibrium_price_strategy}
\end{equation}
where $\xi_t^*=\int_0^t\lambda(s)\,\mathrm{d}Y_s^*$ and $\mathrm{d}Y_t^*=\widehat{\alpha}_t^*\,\mathrm{d}t+\sigma\,\mathrm{d}B_t$.

\begin{theorem}[Risk-averse linear Gaussian weak equilibrium]
\label{thm:risk_averse_main_equilibrium}

The pricing rule and trading strategy in \eqref{eq:risk_averse_equilibrium_price_strategy} form a weak market equilibrium. The price is rational:
\begin{equation}
    P_t^*
    =
    \mathbb{E}
    \left[
        V\mid\mathcal{F}_t^{Y^*}
    \right],
    \qquad
    0\leq t<T,
    \label{eq:risk_averse_rational_price}
\end{equation}
and $\operatorname{Var}(V\mid\mathcal{F}_t^{Y^*})=\Sigma(t)$. Moreover, the informed trading is inconspicuous, $\mathbb{E}[\widehat{\alpha}_t^*\mid\mathcal{F}_t^{Y^*}]=0$, and full revelation holds:
\begin{equation}
    P_T^*=V
    \qquad
    \mathbb{P}\text{-a.s.}
    \label{eq:risk_averse_full_revelation}
\end{equation}

The functions $h$, $g(t,e,w)=w+m(t,e)$, and $U(t,e,w)=w+u(t,e)$, with $h$, $m$, and $u$ defined in \eqref{eq:risk_averse_continuation_functions}, satisfy
\begin{equation}
    \left\{
    \begin{aligned}
        w+h(t,e)
        &=
        -\frac{1}{\rho}
        \log
        \mathbb{E}_{t,e,w}
        \left[
            e^{-\rho W_T^{\widehat{\alpha}^*}}
        \right],\\
        g(t,e,w)
        &=
        \mathbb{E}_{t,e,w}
        \left[
            W_T^{\widehat{\alpha}^*}
        \right],\\
        U(t,e,w)
        &=
        -\frac{1}{\rho}
        \log
        \mathbb{E}_{t,e,w}
        \left[
            e^{-\rho W_T^{\widehat{\alpha}^*}}
        \right]
        -
        \frac{\eta}{2}
        \operatorname{Var}_{t,e,w}
        \left(
            W_T^{\widehat{\alpha}^*}
        \right).
    \end{aligned}
    \right.
    \label{eq:risk_averse_value_representation}
\end{equation}
Finally, for every $t<T$ and every state $(e,w)$, and every constant $a\in \R$,
\begin{equation}
    \limsup_{\varepsilon\downarrow0}
    \frac{
        J(t,e,w;a\otimes_{t,\eps}\widehat\alpha^* )
        -
        J(t,e,w;\widehat{\alpha}^*)
    }{\varepsilon}
    \leq0.
    \label{eq:risk_averse_weak_equilibrium}
\end{equation}
\end{theorem}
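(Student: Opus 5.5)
Three parts need proof: rational pricing and full revelation, the value representations \eqref{eq:risk_averse_value_representation}, and the local equilibrium inequality \eqref{eq:risk_averse_weak_equilibrium}. Throughout I take as given Theorem~\ref{thm:risk_averse_coefficient_system} (the coefficient system $(\Sigma,\lambda,r,s)$) and Proposition~\ref{prop:risk_averse_admissibility} (admissibility).

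\textbf{Rational pricing and full revelation.} Under \eqref{eq:risk_averse_equilibrium_price_strategy}, the pair $(V,P^*)$ solves a linear Gaussian system, so $e_t=V-P_t^*$ satisfies
\[
\mathrm{d}e_t=-\lambda(t)k(t)e_t\,\mathrm{d}t-\sigma\lambda(t)\,\mathrm{d}B_t .
\]
I would prove that $P^*$ coincides with the Kalman--Bucy filter. The filter $\widehat V$ for the observation $\mathrm{d}Y^*=k(t)(V-P_t^*)\,\mathrm{d}t+\sigma\,\mathrm{d}B$ has gain $k\Sigma^F/\sigma^2$. Here $P^*$ is $\mathbb F^{Y^*}$-adapted, so it is a known input and can be subtracted from the observation. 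The filter variance $\Sigma^F$ solves the Riccati equation $\Sigma^{F\prime}=-k^2(\Sigma^F)^2/\sigma^2$ with $\Sigma^F(0)=\Sigma_0$.

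By \eqref{eq:filtering_consistency} and \eqref{eq:variance_equation}, the deterministic $\Sigma$ of Theorem~\ref{thm:risk_averse_coefficient_system} solves the same Riccati equation on $[0,T)$. Riccati uniqueness then gives $\Sigma^F=\Sigma$, and hence the gain equals $\lambda$. Therefore $\mathrm{d}(\widehat V-P^*)=\lambda\,\mathrm{d}Y^*-\lambda\,\mathrm{d}Y^*=0$, which yields \eqref{eq:risk_averse_rational_price}. Inconspicuousness follows immediately from $\mathbb E[e_t\mid\mathcal F_t^{Y^*}]=0$.

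For full revelation, note that $\mathbb E[e_t^2]=\Sigma(t)\to0$, and Proposition~\ref{prop:risk_averse_admissibility} gives $e_s\to0$ almost surely. Since $P^*$ is continuous at $T$, this gives \eqref{eq:risk_averse_full_revelation}.

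\textbf{Value representations.} I would use a Feynman--Kac verification on $[t,T-\delta]$ and then let $\delta\downarrow0$. For $m$: by construction $A,C$ solve \eqref{eq:A_C_equations}, so $m$ solves \eqref{eq:auxiliary_m}. Itô's formula then gives $\mathbb E[W_{T-\delta}+m(T-\delta,e_{T-\delta})]=w+m(t,e)$, once the local martingale term $-2\int A e\,\sigma\lambda\,\mathrm{d}B$ is shown to be a true martingale. This holds because $A$ and $\lambda$ are bounded and $e$ has Gaussian moments. As $\delta\to0$, $A(T-\delta)e_{T-\delta}^2\to0$ in $L^1$ because $A$ is bounded and $\mathbb E e^2\le$ something tending to $0$; here I would use the conditional Gaussian law of $e_s$ given $e_t=e$, whose mean and variance are both controlled by $\Sigma(s)$.

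For $h$ the argument is analogous, applied to $\Phi=e^{-\rho(w+h)}$, which satisfies $\mathcal L^{\widehat\alpha}\Phi=0$. The Riccati term in \eqref{eq:B_E_equations} is exactly what is needed for this. Passing to the limit requires uniform integrability of $e^{-\rho W}$, which I take from the exponential moment in Proposition~\ref{prop:risk_averse_admissibility}.

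For $U$, I would compute $\operatorname{Var}_{t,e}(W_T)=\mathbb E_{t,e}[(W_T-g)^2]$ through the martingale $M_s=W_s+m(s,e_s)$. Since $\mathrm{d}M_s=m_\xi\sigma\lambda\,\mathrm{d}B_s$, the variance equals $\mathbb E\int_t^T\sigma^2\lambda^2 m_\xi^2\,\mathrm{d}s$. I would then show that $h-\tfrac\eta2\operatorname{Var}$ satisfies the same linear PDE and terminal condition as $u$; this is precisely the second equation of \eqref{eq:reduced_extended_hjb} together with the singular constraint.

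\textbf{Local equilibrium inequality.} Under the pasted strategy, the state $(e_{t+\varepsilon},W_{t+\varepsilon})$ reached after using $a$ on $[t,t+\varepsilon)$ is followed by $\widehat\alpha^*$. By the tower property, translation invariance and the value representations, I obtain
\[
J(a\otimes\widehat\alpha^*)=\operatorname{CE}_\rho\bigl(W_{t+\varepsilon}+h(t+\varepsilon,e_{t+\varepsilon})\bigr)-\tfrac\eta2\Bigl(\mathbb E[\mathrm{Var}_{t+\varepsilon}]+\operatorname{Var}\bigl(W_{t+\varepsilon}+m(t+\varepsilon,e_{t+\varepsilon})\bigr)\Bigr).
\]
Expanding each term to first order in $\varepsilon$ by Itô's formula, with bounded $a$ and smooth coefficients on $[t,t+\varepsilon]\subset[0,T)$, the limit of the difference quotient equals the Hamiltonian in \eqref{eq:extended_hjb_general} at $a$ minus its value at $\widehat\alpha^*$. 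The singular constraint makes the coefficient of $a$ vanish identically, since $\lambda u_\xi+e=0$ for $u=e^2/(2\lambda)+D$ under the sign convention $\xi\mapsto -e$. Hence the limit equals $0$, and the inequality holds with equality.

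\textbf{Main obstacle.} I expect the hardest step to be the rigorous first-order expansion of the nonlinear terms: the CE of a random variable and the variance of the conditional mean. Each needs uniform integrability of $e^{-\rho h(t+\varepsilon,\cdot)}$ and of $m^2$ over small $\varepsilon$. I would handle this with the Gaussian structure of $e_{t+\varepsilon}$, whose coefficients are quadratic in $e$ and bounded on $[t,(t+T)/2]$, giving explicit Gaussian integrals for the exponentials. I would also need the condition $\rho\Gamma\cdot\mathrm{Var}$ small, which holds for small $\varepsilon$.
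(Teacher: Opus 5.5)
Your proposal is correct and is essentially the paper's proof: Kalman--Bucy filtering with Riccati uniqueness, Feynman--Kac identities for $m$ and $\Phi=e^{-\rho(w+h)}$ passed to the limit $t_n\uparrow T$, and a first-order It\^o expansion of the pasted criterion. Your variance bookkeeping, via $\mathbb{E}\int_t^T\sigma^2\lambda^2m_\xi^2\,\mathrm{d}s$ and the law of total variance, is equivalent to the paper's second-moment function $Q=g^2+\frac{2}{\eta}(w+h-U)$ with $\mathcal{L}^{\widehat\alpha^*}Q=0$. However, the identification $u=h-\frac{\eta}{2}\operatorname{Var}$ should be closed by the same It\^o-and-limit argument along the candidate path, not by a PDE uniqueness theorem, because the drift $-\lambda k e$ of $e$ is singular at $T$.

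There are two small corrections. First, the filter innovation is $\mathrm{d}Y^*-k(\widehat V-P^*)\,\mathrm{d}t$, so the filter gives $\mathrm{d}(\widehat V-P^*)=-\lambda k(\widehat V-P^*)\,\mathrm{d}t$ on each $[0,\tau]$, $\tau<T$, and this vanishes because $\widehat V_0=P^*_0=\mu$. Second, the domination you need for $\Phi(t_n,e_{t_n},W_{t_n})$ is $\Phi\le e^{-\rho w}$, which follows from $W_s\ge w$ and $h\ge0$ (since $\Gamma>0$ and $E\ge0$). It does not follow from $\mathbb{E}[e^{-\rho W_T}]<\infty$, which gives no upper bound on $e^{-\rho W_{t_n}}\ge e^{-\rho W_T}$.
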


Theorem~\ref{thm:risk_averse_main_equilibrium} combines the coefficient construction, admissibility estimates, and verification argument. It establishes the informed trader's intrapersonal equilibrium condition and the rationality of the associated pricing rule. The proofs of Theorem~\ref{thm:risk_averse_coefficient_system}, Proposition~\ref{prop:risk_averse_admissibility}, and Theorem~\ref{thm:risk_averse_main_equilibrium} are deferred to the risk-averse part of Section~\ref{sec:proof}.

\section{Financial interpretations and sensitivity analysis}
\label{sec:financial}

This section examines how the preference parameters affect equilibrium price impact, the revelation of private information, and the timing of information-rent extraction. For the numerical illustrations, we normalize $T=\Sigma_0=\sigma=1$. For each parameter pair $(\rho,\eta)$, the shooting parameter $b^*$ is recomputed from \eqref{eq:risk_averse_shooting_equation}, after which $\Sigma$ and $\lambda$ are recovered from \eqref{eq:risk_averse_variance_representation} and \eqref{eq:risk_averse_coefficient_representation}. Thus, all comparisons hold fixed the trading horizon $T$, the prior variance $\Sigma_0$, and the noise-trading volatility $\sigma$ while comparing fully resolved equilibrium paths.

\subsection{Price impact, information revelation, and information rents}

The equilibrium relations
\[
    \widehat{\alpha}_t^*
    =
    k(t)(V-P_t^*),
    \qquad
    k(t)
    =
    \frac{\sigma^2\lambda(t)}{\Sigma(t)},
    \qquad
    \Sigma^{\prime}(t)
    =
    -\sigma^2\lambda(t)^2
\]
describe the connection between informed trading and the market maker's learning process. Trading on the current pricing error $V-P_t^*$ generates current information rents, but aggregate order flow also reveals private information and reduces the pricing errors that can be exploited in the future. Indeed,
\[
    \mathbb{E}
    \left[
        \widehat{\alpha}_t^*(V-P_t^*)
        \mid\mathcal{F}_t^{Y^*}
    \right]
    =
    \sigma^2\lambda(t),
    \qquad
    -\Sigma^{\prime}(t)
    =
    \sigma^2\lambda(t)^2.
\]
Thus, $\lambda(t)$ determines both the expected instantaneous rate of information-rent extraction and the rate at which the market maker learns the private information. Here, the rate of learning is measured by the decline in the conditional variance $\Sigma(t)$.

The dynamic trade-off between exploiting the current pricing error and preserving the future informational advantage is already present in the classical Kyle--Back model. This trade-off should be distinguished from the intrapersonal conflict generated by the mean--variance term. The classical informed trader is dynamically consistent, even though current trading affects future profit opportunities. Under risk neutrality without a mean--variance penalty, the incentives to exploit current information and preserve future information exactly balance, resulting in constant price impact. In the present model,
\[
    \lambda^{\prime}(t)
    =
    -\sigma^2\lambda(t)^2
    \bigl(
        \rho s(t)^2+\eta r(t)^2
    \bigr),
\]
so CARA risk aversion and the mean--variance component alter this balance and generate a declining term structure of price impact whenever at least one preference parameter is positive. Since the market maker remains risk neutral and sets $P_t^*=\mathbb{E}[V\mid\mathcal{F}_t^{Y^*}]$, the variation in $\lambda$ reflects equilibrium inference rather than a risk premium charged by the market maker.

Figure~\ref{fig:term_structure} compares four benchmark specifications: the classical Kyle--Back case $(\rho,\eta)=(0,0)$, the CARA-only case $(\rho,\eta)=(0.5,0)$, the mean--variance-only case $(\rho,\eta)=(0,0.5)$, and the combined-preferences case $(\rho,\eta)=(0.5,0.5)$.

\begin{figure}[htbp]
    \centering
    \includegraphics[width=\textwidth]{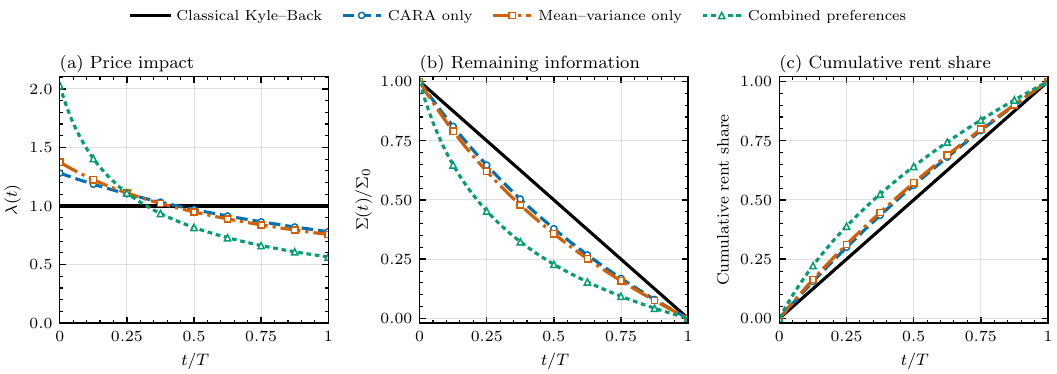}
    \caption{Equilibrium dynamics under alternative preference specifications. The common parameters are $T=\Sigma_0=\sigma=1$. Panel \textnormal{(a)} plots the equilibrium price impact $\lambda(t)$. Panel \textnormal{(b)} plots the remaining-information ratio $\Sigma(t)/\Sigma_0$. Panel \textnormal{(c)} plots the cumulative share of expected information rents extracted by time $t$.}
    \label{fig:term_structure}
\end{figure}

Panel \textnormal{(a)} shows that price impact is constant in the classical benchmark but decreases over the trading horizon in the other three cases. Both preference channels raise price impact near the beginning of the trading horizon and reduce it near the terminal time. This change is most pronounced when $\rho$ and $\eta$ are both positive. Since market depth is measured by $1/\lambda(t)$, CARA risk aversion and the mean--variance penalty make the market shallower early in the trading horizon and deeper later, relative to the classical benchmark.

Panel \textnormal{(b)} translates this term structure into the market maker's learning process. Because $-\Sigma^{\prime}(t)=\sigma^2\lambda(t)^2$, the higher initial price impact under CARA risk aversion and the mean--variance penalty accelerates the early revelation of private information. The combined-preferences case therefore reveals private information earlier than the classical benchmark and either preference channel in isolation. All four paths nevertheless satisfy $\Sigma(T)=0$, so the asset value is fully revealed at the terminal time.

Panel \textnormal{(c)} reports
\[
    \frac{\displaystyle\int_0^t\lambda(s)\,\mathrm{d}s}
    {\displaystyle\int_0^T\lambda(s)\,\mathrm{d}s}.
\]
Since the expected instantaneous information-rent rate is $\sigma^2\lambda(t)$, this ratio represents the fraction of expected information rents extracted by time $t$. The declining price-impact paths imply that information rents are also shifted toward the beginning of the trading horizon. This front-loading is strongest when both preference channels are present. Consequently, the front-loading of price impact in Panel \textnormal{(a)} is accompanied by earlier information revelation and information-rent extraction in Panels \textnormal{(b)} and \textnormal{(c)}.

\subsection{Preference sensitivity and the front-loading of price impact}

The two preference parameters originate from different components of the informed trader's criterion. The parameter $\rho$ measures CARA risk aversion through the conditional certainty equivalent. By contrast, $\eta$ determines the additional penalty on conditional wealth variance and introduces time inconsistency. For $t<s<T$, the law of total variance implies
\begin{equation}
    \operatorname{Var}_t(W_T)
    =
    \mathbb{E}_t
    \left[
        \operatorname{Var}_s(W_T)
    \right]
    +
    \operatorname{Var}_t
    \left(
        \mathbb{E}_s[W_T]
    \right).
    \label{eq:financial_total_variance_decomposition}
\end{equation}
The component $\operatorname{Var}_t(\mathbb{E}_s[W_T])$ enters the conditional variance evaluated at time $t$ because $\mathbb{E}_s[W_T]$ is still random from the perspective of the self acting at that time. By time $s$, however, $\mathbb{E}_s[W_T]$ is $\mathcal{F}_s$-measurable and therefore does not contribute to $\operatorname{Var}_s(W_T)$. The selves acting at times $t$ and $s$ thus assign different conditional variance costs to the same terminal wealth, which creates an incentive to reoptimize at time $s$. The current self must therefore anticipate this future reoptimization when deciding how rapidly to exploit its informational advantage and reveal private information.

Figure~\ref{fig:preference_sensitivity} compares the effects of the two parameters on the term structure of price impact. Panel \textnormal{(a)} varies $\eta$ while fixing $\rho=0.5$, whereas Panel \textnormal{(b)} varies $\rho$ while fixing $\eta=0.5$.

\begin{figure}[htbp]
    \centering
    \includegraphics[width=\textwidth]{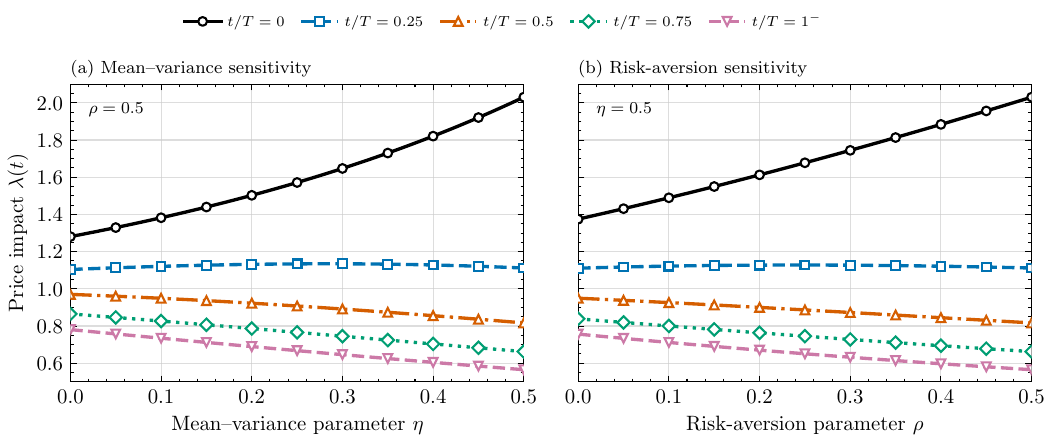}
    \caption{Sensitivity of equilibrium price impact to the preference parameters. The common parameters are $T=\Sigma_0=\sigma=1$. Panel \textnormal{(a)} varies the mean--variance parameter $\eta$ while fixing $\rho=0.5$. Panel \textnormal{(b)} varies the CARA risk-aversion parameter $\rho$ while fixing $\eta=0.5$. The curves report $\lambda(t)$ at $t/T\in\{0,0.25,0.5,0.75,1^{-}\}$, where $1^{-}$ denotes the terminal limit.}
    \label{fig:preference_sensitivity}
\end{figure}

The two panels exhibit a common qualitative pattern. An increase in either preference parameter raises initial price impact but lowers price impact during the later part of the trading horizon. In particular, the terminal value $b^*=\lambda(T-)$ decreases with both parameters. At $t/T=0.25$, the response is substantially weaker and mildly nonmonotone. Hence, neither parameter produces a uniform upward or downward shift of the price-impact curve. Their main effect is to change the temporal profile of price impact.

This pattern reflects the interaction between the differential equation for $\lambda$ and the shooting condition. The terms $\rho s(t)^2$ and $\eta r(t)^2$ both contribute positively to the magnitude of the negative time derivative of $\lambda$. At the same time, the terminal value $b^*$ adjusts endogenously so that the corresponding conditional variance satisfies both $\Sigma(0)=\Sigma_0$ and $\Sigma(T)=0$ over the fixed trading horizon $[0,T]$. In the numerical results, this endogenous adjustment is accompanied by lower terminal price impact and higher initial price impact. Stronger risk sensitivity therefore produces a more steeply declining term structure.

The similarity between the two panels does not imply that $\rho$ and $\eta$ are economically equivalent. The parameter $\rho$ changes the CARA certainty equivalent and operates through $s(t)$, whereas $\eta$ penalizes conditional wealth variance and operates through $r(t)$. Moreover, the CARA-only problem with $\eta=0$ is dynamically consistent, while $\eta>0$ generates the intrapersonal interaction underlying the weak-equilibrium formulation. The common front-loading pattern arises because both channels enter the price-impact dynamics through the positive combination $\rho s(t)^2+\eta r(t)^2$, although the endogenous coefficients $r$ and $s$ generally respond differently.

Through the variance penalty in \eqref{eq:risk_averse_objective} and the decomposition in \eqref{eq:financial_total_variance_decomposition}, a change in $\eta$ affects both the strength of the conditional variance penalty and the associated intrapersonal conflict. The numerical comparison between $\eta=0$ and $\eta>0$ therefore does not isolate the effect of time inconsistency alone. Isolating that effect would require comparison with a precommitment benchmark in which the trader has the same values of $\rho$ and $\eta$ but chooses and commits to the entire future trading strategy at the initial time.

\section{Proof of the main results}
\label{sec:proof}

\subsection{The risk-neutral case}
\label{subsec:risk_neutral_case}

\subsubsection{Solution to the shooting problem}
\label{subsubsec:risk_neutral_shooting_problem}

The risk-neutral coupled system \eqref{eq:risk_neutral_coupled_system} is subject to conditions at opposite ends of the trading interval: the prior variance is $\Sigma(0)=\Sigma_0$, while full revelation imposes $\Sigma(T)=0$. The terminal behavior of $\lambda$ and $r$ is not specified directly by the model. We first seek a solution in the nondegenerate class characterized by $\lambda(T-)=b>0$ and $r(T-)=1$, where $b$ is initially treated as a free parameter. For each $b>0$, we solve the system in the variance variable over $[0,\Sigma_0]$ and determine the resulting length of the trading interval. The shooting problem consists in selecting $b$ so that this length equals the given horizon $T$. Once $b$ has been determined, the resulting explicit solution will be used to verify the proposed terminal behavior.

The equation for $\lambda$ in \eqref{eq:risk_neutral_coupled_system} implies $\lambda'(t)=-\eta\sigma^2\lambda(t)^2r(t)^2\leq0$. Hence, any solution satisfying $\lambda(T-)=b>0$ also satisfies $\lambda(t)\geq b>0$ for $t<T$. It follows from the formula for $\Sigma'(t)$ in \eqref{eq:filtering_equations} that $t\mapsto\Sigma(t)$ is strictly decreasing and therefore admits an inverse. In the following, we may use $x:=\Sigma(t)$ as a backward time variable, with $x=0$ corresponding to $t=T$ and $x=\Sigma_0$ corresponding to $t=0$. The advantage is that the terminal singularity at $\Sigma(T)=0$ is moved to the fixed endpoint $x=0$, and the unknown time horizon $T$ is separated from the system. Write $\Lambda_b(x):=\lambda(t)$ and $R_b(x):=r(t)$. The equations for $\lambda$ and $r$ in \eqref{eq:risk_neutral_coupled_system} become
\begin{equation}
    \left\{
    \begin{aligned}
        \frac{\mathrm{d}\Lambda_b}{\mathrm{d}x}
        &=
        \eta R_b^2,\\
        \frac{\mathrm{d}R_b}{\mathrm{d}x}
        &=
        \frac{\eta R_b^3}{\Lambda_b}
        +
        \frac{2(1-R_b)}{x},
    \end{aligned}
    \right.
    \qquad
    \Lambda_b(0)=b,
    \quad
    R_b(0)=1.
    \label{eq:risk_neutral_variance_time_system}
\end{equation}
To deal with the singularity in second equation of \eqref{eq:risk_neutral_variance_time_system} at $x=0$, we tranform it to an equivalent Volterra formulation
\begin{equation}
    \left\{
    \begin{aligned}
        \Lambda_b(x)
        &=
        b+\eta\int_0^xR_b(s)^2\,\mathrm{d}s,\\
        R_b(x)
        &=
        1+\frac{\eta}{x^2}
        \int_0^x
        \frac{s^2R_b(s)^3}{\Lambda_b(s)}\,\mathrm{d}s.
    \end{aligned}
    \right.
    \label{eq:risk_neutral_volterra_system}
\end{equation}
Indeed, integrating the first equality in \eqref{eq:risk_neutral_variance_time_system} from 0 to $x$ yields the first equality in \eqref{eq:risk_neutral_volterra_system}. Multiplying both sides of the second equality in \eqref{eq:risk_neutral_variance_time_system} by $x^2$, we get
$(x^2 R_b)' = 2x R_b + x^2R_b'= 2x+  \eta x^2 R_b^3/\Lambda_b$,
and hence
$$
x^2 R_b(x) -\lim_{\varepsilon\to 0+} \varepsilon^2 R_b(\varepsilon) = x^2 +\eta \int_0^x \frac{s^2 R_b^3(s) }{\Lambda_b(s)}ds.
$$
Then the second equality in \eqref{eq:risk_neutral_volterra_system} follows from the continuity of $R_b$ at $x=0$.

We first solve the singular variance-time system and establish uniqueness of its positive solution.

\begin{lemma}
\label{lem:risk_neutral_variance_time_solution}
For every $b>0$, system \eqref{eq:risk_neutral_variance_time_system} admits a unique global positive solution of the form
\begin{equation}
    R_b(x)
    =
    1+\frac{\eta x}{3b},
    \qquad
    \Lambda_b(x)
    =
    b\left(1+\frac{\eta x}{3b}\right)^3,
    \qquad x\geq0.
    \label{eq:risk_neutral_variance_time_solution}
\end{equation}
\end{lemma}

\begin{proof}
	Introduce $y:=\eta x/b$, $\Lambda_b(x)=b\widehat{\Lambda}(y)$, and $R_b(x)=\widehat{R}(y)$. By \eqref{eq:risk_neutral_variance_time_system}, the rescaled system is
	\[
	\widehat{\Lambda}'(y)=\widehat{R}(y)^2,
	\qquad
	\widehat{R}'(y)=\frac{\widehat{R}(y)^3}{\widehat{\Lambda}(y)}
	+\frac{2(1-\widehat{R}(y))}{y},
	\qquad
	\widehat{\Lambda}(0)=\widehat{R}(0)=1.
	\]
	The functions $\widehat{R}(y)=1+y/3$ and $\widehat{\Lambda}(y)=(1+y/3)^3$ satisfy this system. Returning to the original variables yields \eqref{eq:risk_neutral_variance_time_solution}, which is finite and strictly positive for every $x\geq0$.

    It remains to prove uniqueness. We remark that the usual ODE uniqueness result cannot be applied here because of the singular initial point $x=0$. Hence in the following analysis, we will first establish local uniqueness through the Volterra formulation, then use standard ODE uniqueness away from $x=0$. For $\delta>0$, let
	\[
	\mathcal{K}_\delta
	:=
	\left\{
	(\Lambda,R)\in C([0,\delta])^2:
	\|\Lambda-b\|_{L^\infty([0,\delta])}\leq\frac{b}{2},
	\quad
	\|R-1\|_{L^\infty([0,\delta])}\leq\frac{1}{2}
	\right\},
	\]
	which is a nonempty closed subset of the Banach space $C([0,\delta])^2$ and is therefore complete under the norm $\|(\Lambda,R)\|_{L^\infty([0,\delta])}	:=	\max\left\{	\|\Lambda\|_{L^\infty([0,\delta])}	\|R\|_{L^\infty([0,\delta])}	\right\}.$
	Define the Volterra operator $\mathcal{V}=(\mathcal{V}_1,\mathcal{V}_2)$ on $\mathcal{K}_\delta$ by
	\[
	\mathcal{V}_1(\Lambda,R)(x)
	:=
	b+\eta\int_0^xR(s)^2\,\mathrm{d}s,
	\qquad
	\mathcal{V}_2(\Lambda,R)(x)
	:=
	1+\frac{\eta}{x^2}
	\int_0^x
	\frac{s^2R(s)^3}{\Lambda(s)}\,\mathrm{d}s,
	\]
	where $\mathcal{V}_2(\Lambda,R)(0):=1$. Since
	\[
	\left|
	\frac{\eta}{x^2}
	\int_0^x
	\frac{s^2R(s)^3}{\Lambda(s)}\,\mathrm{d}s
	\right|
	\leq Cx
	\to0
	\qquad\text{as }x\downarrow0,
	\]
	the function $\mathcal{V}_2(\Lambda,R)$ is continuous at $x=0$.	For $(\Lambda,R)\in\mathcal{K}_\delta$, we have $\Lambda\in[b/2,3b/2]$ and $R\in[1/2,3/2]$. It follows that
	\[
	\|\mathcal{V}_1(\Lambda,R)-b\|_{L^\infty([0,\delta])}
	\leq
	\frac{9\eta\delta}{4},
	\qquad
	\|\mathcal{V}_2(\Lambda,R)-1\|_{L^\infty([0,\delta])}
	\leq
	\frac{9\eta\delta}{4b}.
	\]
	Thus, $\mathcal{V}$ maps $\mathcal{K}_\delta$ into itself whenever
	$0<\delta\leq\frac{2b}{9\eta}$.
	
	The function $(\Lambda,R)\mapsto R^3/\Lambda$ is Lipschitz on
	$[b/2,3b/2]\times[1/2,3/2]$. Hence, for some constant $L>0$ and any
	$(\Lambda_i,R_i)\in\mathcal{K}_\delta$, $i=1,2$,
	\[
	\|\mathcal{V}_1(\Lambda_1,R_1)
	-\mathcal{V}_1(\Lambda_2,R_2)\|_{L^\infty([0,\delta])}
	\leq
	3\eta\delta\|R_1-R_2\|_{L^\infty([0,\delta])},
	\]
	and
	\[
	\|\mathcal{V}_2(\Lambda_1,R_1)
	-\mathcal{V}_2(\Lambda_2,R_2)\|_{L^\infty([0,\delta])}
	\leq
	\frac{\eta L\delta}{3}
	\max\left\{
	\|\Lambda_1-\Lambda_2\|_{L^\infty([0,\delta])},
	\|R_1-R_2\|_{L^\infty([0,\delta])}
	\right\}.
	\]
	Choose $\delta_b>0$ such that
	\[
	0<\delta_b<
	\min\left\{
	\frac{2b}{9\eta},
	\frac{1}{\eta\max\left\{3,\frac{L}{3}\right\}}
	\right\}.
	\]
	Then $\mathcal{V}$ maps $\mathcal{K}_{\delta_b}$ into itself and is a contraction under the uniform norm. The Banach fixed-point theorem yields a unique fixed point of $\mathcal{V}$ in $\mathcal{K}_{\delta_b}$, and hence a positive solution of \eqref{eq:risk_neutral_volterra_system} on $[0,\delta_b]$. Differentiating the Volterra system for $x>0$ shows that this solution satisfies \eqref{eq:risk_neutral_variance_time_system} on $(0,\delta_b]$.
	
	Let $(\widetilde{\Lambda},\widetilde R)$ be any other positive solution with the same initial values. It satisfies \eqref{eq:risk_neutral_volterra_system} and, by continuity, its restriction to $[0,\delta_0]$ belongs to $\mathcal{K}_{\delta_0}$ for some $0<\delta_0\leq\delta_b$. The contraction argument applied on $\mathcal{K}_{\delta_0}$ shows that it coincides with the fixed point on $[0,\delta_0]$.
	
	For $x>0$ and $\Lambda>0$, the right-hand side of \eqref{eq:risk_neutral_variance_time_system} is continuously differentiable, and hence locally Lipschitz, in $(\Lambda,R)$. Standard ODE uniqueness therefore implies that any positive solution with the same initial values coincides with the explicit solution \eqref{eq:risk_neutral_variance_time_solution} throughout their common interval of existence.
	
	It remains only to rule out a finite maximal interval for such a positive solution. Suppose that $(\widetilde{\Lambda},\widetilde R)$ is defined on a maximal interval $[0,x_{\max})$ with $x_{\max}<\infty$. By the preceding uniqueness argument,
	\[
	(\widetilde{\Lambda}(x),\widetilde R(x))=(\Lambda_b(x),R_b(x)),\qquad 0\leq x<x_{\max}.
	\]
	Since the explicit solution \eqref{eq:risk_neutral_variance_time_solution} remains finite and strictly positive at $x_{\max}$, the right-hand side \eqref{eq:risk_neutral_variance_time_system} is regular in a neighborhood of $\bigl(x_{\max},\Lambda_b(x_{\max}),R_b(x_{\max})\bigr).$
	The standard ODE existence theorem therefore extends $(\widetilde{\Lambda},\widetilde R)$ beyond $x_{\max}$, contradicting maximality. Hence $x_{\max}=\infty$, and \eqref{eq:risk_neutral_variance_time_solution} is the unique global positive solution of \eqref{eq:risk_neutral_variance_time_system} on $[0,\infty)$.
\end{proof}

For a fixed $b$, solving the system after the change of variables $x=\Sigma(t)$ does not yet solve the original boundary-value problem, since the resulting solution need not correspond to the given calendar-time horizon $T$. It remains to determine the amount of calendar time required for the conditional variance to decrease from $\Sigma_0$ to $0$.
For a fixed $b>0$, the formula for $\Sigma'$ in \eqref{eq:filtering_equations} with $x=\Sigma(t)$ and $\Lambda_b(x)=\lambda(t)$ implies $\mathrm{d}t=-\mathrm{d}x/(\sigma^2\Lambda_b(x)^2)$. Integrating backward from $\Sigma(T)=0$ to $\Sigma(0)=\Sigma_0$, the length of the trading interval associated with $\Lambda_b$ is
\begin{equation}
    \mathcal{T}_0(b)
    :=
    \int_0^{\Sigma_0}
    \frac{\mathrm{d}x}
    {\sigma^2\Lambda_b(x)^2}.
    \label{eq:risk_neutral_horizon_map}
\end{equation}
Thus, the terminal value $b$ is compatible with the trading horizon precisely when $\mathcal{T}_0(b)=T$.

The next lemma shows that the shooting equation $\mathcal T_0 (b)=T$ selects exactly one terminal price impact.

\begin{lemma}
\label{lem:risk_neutral_unique_terminal_impact}
For every $T>0$, there exists a unique $b^*>0$ such that $\mathcal{T}_0(b^*)=T$.
\end{lemma}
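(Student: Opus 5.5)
Using Lemma~\ref{lem:risk_neutral_variance_time_solution}, the map \eqref{eq:risk_neutral_horizon_map} can be computed in closed form. Since $\Lambda_b(x)=b(1+\eta x/(3b))^3$, we have
\[
\mathcal{T}_0(b)=\frac{1}{\sigma^2b^2}\int_0^{\Sigma_0}\Bigl(1+\frac{\eta x}{3b}\Bigr)^{-6}\mathrm{d}x
=\frac{3}{5\sigma^2\eta b}\left[1-\Bigl(1+\frac{\eta\Sigma_0}{3b}\Bigr)^{-5}\right],
\]
which is exactly the expression in \eqref{eq:coefficient_map}. The lemma then reduces to three properties of this explicit function: continuity, strict monotonicity, and its limits at $0$ and $\infty$. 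Once these hold, the intermediate value theorem gives existence of $b^*$ and strict monotonicity gives uniqueness.

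\textbf{Continuity.} This is immediate, since $\mathcal{T}_0$ is a composition of elementary functions that are continuous for $b>0$.

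\textbf{Monotonicity.} I would not differentiate the closed form. Instead I would argue from the integral representation. For each fixed $x\in(0,\Sigma_0]$, the integrand is $1/(\sigma^2\Lambda_b(x)^2)$ with $\Lambda_b(x)=(b^{1/3}+\eta x\,b^{-2/3}/3)^3$. It therefore suffices to show that $\Lambda_b(x)$ is strictly increasing in $b$. Writing $\Lambda_b(x)=b+\eta x+\eta^2x^2/(3b)+\eta^3x^3/(27b^2)$ does not settle this, because the last two terms decrease in $b$.

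So I would use the substitution $y=\eta x/(3b)$ instead, which gives
\[
\mathcal{T}_0(b)=\frac{3}{\sigma^2\eta b}\int_0^{\eta\Sigma_0/(3b)}(1+y)^{-6}\,\mathrm{d}y=\frac{3}{\sigma^2\eta}\,\beta\,F(\beta),\qquad \beta:=\frac{1}{b},\quad F(\beta):=\int_0^{\eta\Sigma_0\beta/3}(1+y)^{-6}\mathrm{d}y.
\]
Here $\beta F(\beta)$ is a product of two positive, strictly increasing functions of $\beta$, so it is strictly increasing in $\beta$. Hence $\mathcal{T}_0$ is strictly decreasing in $b$.

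\textbf{Limits.} As $b\uparrow\infty$, we have $\beta\to0$ and $F$ stays bounded (indeed $F\le 1/5$), so $\mathcal{T}_0(b)\le 3/(5\sigma^2\eta b)\to0$. As $b\downarrow0$, we have $\beta\to\infty$ and $F(\beta)\to1/5>0$, so $\mathcal{T}_0(b)\sim 3/(5\sigma^2\eta b)\to\infty$.

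\textbf{Conclusion and main difficulty.} For any $T>0$, the intermediate value theorem then yields $b^*$ with $\mathcal{T}_0(b^*)=T$, and strict monotonicity makes it unique. The only step needing care is the monotonicity: a naive pointwise comparison of the integrands in $b$ fails, and the rescaling to $\beta F(\beta)$ is what removes the difficulty.
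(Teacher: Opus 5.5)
Your proof is correct and follows essentially the same route as the paper. The paper also reduces to the closed form, writes $\mathcal{T}_0$ as a constant times $f(z)=z[1-(1+z)^{-5}]$ with $z=\eta\Sigma_0/(3b)$, which is your $\beta F(\beta)$ up to constants. It checks $f'>0$ directly, where you use the product of two positive increasing functions, and then concludes with the same limits and the intermediate value theorem (your remark that $\Lambda_b(x)$ is not monotone in $b$ pointwise is accurate, since $\partial_b\Lambda_b(x)$ has the sign of $b-2\eta x/3$).
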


\begin{proof}
Substituting \eqref{eq:risk_neutral_variance_time_solution} into \eqref{eq:risk_neutral_horizon_map} yields
\begin{equation}
    \mathcal{T}_0(b)
    =
    \frac{3}{5\sigma^2\eta b}
    \left[
        1-
        \left(
            1+\frac{\eta\Sigma_0}{3b}
        \right)^{-5}
    \right].
    \label{eq:risk_neutral_horizon_map_explicit}
\end{equation}
This function is continuous on $(0,\infty)$. Set $z:=\eta\Sigma_0/(3b)$ and $f(z):=z[1-(1+z)^{-5}]$ on $(0,\infty)$. Then $\mathcal{T}_0(b)=\frac{9}{5\sigma^2\eta^2\Sigma_0}f(z)$. Since $f'(z)=1-(1+z)^{-5}+5z(1+z)^{-6}>0$, the function $f$ is strictly increasing. Because $z$ is strictly decreasing in $b$, $\mathcal{T}_0$ is strictly decreasing in $b$.

Moreover, \eqref{eq:risk_neutral_variance_time_solution} implies $\Lambda_b(x)\geq b$, and hence by \eqref{eq:risk_neutral_horizon_map}, 
$0<\mathcal{T}_0(b)\leq\Sigma_0/(\sigma^2b^2)$. Therefore,
$\mathcal{T}_0(b)\to0$ as $b\uparrow\infty$. As $b\downarrow0$,
\eqref{eq:risk_neutral_horizon_map_explicit} yields
$\mathcal{T}_0(b)\sim3/(5\sigma^2\eta b)$, so
$\mathcal{T}_0(b)\to\infty$. The intermediate value theorem and strict monotonicity yield the existence and uniqueness of $b^*$.
\end{proof}

\begin{proof}[Proof of Theorem~\ref{thm:explicit_solution}]
Lemma~\ref{lem:risk_neutral_unique_terminal_impact} provides a unique $b^*>0$ satisfying the shooting condition $\mathcal{T}_0(b^*)=T$. For such value $b^*$, the original time variable and the conditional variance are related by
\begin{equation}
    T-t
    =
    \frac{3}{5\sigma^2\eta b^*}
    \left[
        1-
        \left(
            1+\frac{\eta\Sigma(t)}{3b^*}
        \right)^{-5}
    \right].
    \label{eq:risk_neutral_time_variance_relation}
\end{equation}
Recall that $\chi(t)=1-\frac{5\sigma^2\eta b^*}{3}(T-t)$. Solving \eqref{eq:risk_neutral_time_variance_relation} for $\Sigma(t)$, we get
    $\Sigma(t)
    =
    \frac{3b^*}{\eta}
    \left(
        \chi(t)^{-1/5}-1
    \right)$. 
Substitution into \eqref{eq:risk_neutral_variance_time_solution} yields
$\lambda(t)=b^*\chi(t)^{-3/5}$ and $r(t)=\chi(t)^{-1/5}$.

Evaluating the expression for $\Sigma$ at $t=0$ and using $\Sigma(0)=\Sigma_0$ implies $\chi(0)=(1+\eta\Sigma_0/(3b^*))^{-5}>0$. Since $\chi$ is increasing and $\chi(T)=1$, it remains strictly positive on $[0,T]$. Consequently, $\Sigma$, $\lambda$, and $r$ are finite on $[0,T]$, with $\lambda(t)>0$ and $r(t)>0$ throughout the trading interval. Moreover,
\[
    \Sigma(T)=0,
    \quad
    \lambda(T-)=b^*,
    \quad
    r(T-)=1,
\]
%Thus, the solution satisfies the prior condition $\Sigma(0)=\Sigma_0$, the full-revelation condition $\Sigma(T)=0$, and the nondegenerate terminal conditions used in the shooting construction. This verifies the consistency of the terminal specification and 
which completes the proof.
\end{proof}

\subsubsection{Admissibility}
\label{subsubsec:risk_neutral_admissibility}

Before verification, we establish the integrability properties of the candidate strategy and the associated state and wealth processes. Throughout this subsubsection, $\Sigma$, $\lambda$,
and $k$ denote the functions determined in Theorem~\ref{thm:explicit_solution} and
\eqref{eq:explicit_A_k}, and $\widehat{\alpha}_t^*=k(t)e_t.$ Although the feedback coefficient $k(t)$ diverges as $t\uparrow T$, the pricing error vanishes sufficiently rapidly to keep cumulative informed trading and terminal wealth well defined. %{\color{blue}Throughout this subsubsection, we write $\lambda$ and $k$ for $\lambda^*$ and $k^*$, respectively.}

We first derive the representation and moment estimates for the pricing error as $t\uparrow T$.

\begin{lemma}
\label{lem:risk_neutral_pricing_error_moments}
For every initial state $(t,e)$ with $t<T$, the pricing error under the candidate strategy $\widehat{\alpha}^*$ satisfies
\begin{equation}
    e_s
    =
    \frac{\Sigma(s)}{\Sigma(t)}e
    -
    \sigma\Sigma(s)
    \int_t^s
    \frac{\lambda(r)}{\Sigma(r)}\,\mathrm{d}B_r,
    \qquad t\leq s<T.
    \label{eq:risk_neutral_pricing_error_representation}
\end{equation}
Consequently,
\begin{equation}
    \mathbb{E}_{t,e}[e_s^2]
    =
    \Sigma(s)
    -
    \frac{\Sigma(s)^2}{\Sigma(t)}
    +
    \frac{\Sigma(s)^2}{\Sigma(t)^2}e^2.
    \label{eq:risk_neutral_error_second_moment}
\end{equation}
Moreover, $e_s\to0$ almost surely and in $L^p$ for every $p\geq1$ as $s\uparrow T$.
\end{lemma}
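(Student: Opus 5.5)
Under the candidate strategy the pricing error $e_s=V-\mu-\xi_s$ satisfies the linear SDE
\[
\mathrm{d}e_s=-\lambda(s)\,\mathrm{d}Y_s=-\lambda(s)k(s)e_s\,\mathrm{d}s-\sigma\lambda(s)\,\mathrm{d}B_s,\qquad e_t=e .
\]
The plan is to solve this equation explicitly, read off the second moment from the solution, and then show convergence to zero.

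The first step is the explicit solution. By \eqref{eq:variance_equation} we have $\lambda(s)k(s)=-\Sigma'(s)/\Sigma(s)$, so the integrating factor is $\exp\bigl(\int_t^s\lambda k\bigr)=\Sigma(t)/\Sigma(s)$. Applying It\^o's formula to $e_s/\Sigma(s)$ gives
\[
\mathrm{d}\bigl(e_s/\Sigma(s)\bigr)=-\sigma\frac{\lambda(s)}{\Sigma(s)}\,\mathrm{d}B_s .
\]
Since $\Sigma>0$ and the coefficients are continuous on $[t,s]$ for every $s<T$, the SDE is well posed on $[t,T)$. Integrating yields \eqref{eq:risk_neutral_pricing_error_representation}.

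The second step is the second moment. The Itô isometry gives
\[
\mathbb{E}_{t,e}[e_s^2]=\frac{\Sigma(s)^2}{\Sigma(t)^2}e^2+\sigma^2\Sigma(s)^2\int_t^s\frac{\lambda(r)^2}{\Sigma(r)^2}\,\mathrm{d}r .
\]
Using $\sigma^2\lambda^2=-\Sigma'$, the integral equals $\int_t^s -\Sigma'(r)/\Sigma(r)^2\,\mathrm{d}r=1/\Sigma(s)-1/\Sigma(t)$. This gives \eqref{eq:risk_neutral_error_second_moment}.

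The third step is $L^p$ convergence. The error $e_s$ is Gaussian with mean $\Sigma(s)e/\Sigma(t)$ and variance $\Sigma(s)-\Sigma(s)^2/\Sigma(t)\le\Sigma(s)$. Gaussian moments are controlled by mean and variance, so $\mathbb{E}|e_s|^p\le C_p\bigl(\Sigma(s)^{p/2}+|e|^p\Sigma(s)^p/\Sigma(t)^p\bigr)$. The right-hand side tends to $0$ because $\Sigma(T)=0$ and $\Sigma$ is continuous by Theorem~\ref{thm:explicit_solution}.

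The fourth step, almost sure convergence, is the main obstacle. I would write the stochastic term as $\Sigma(s)M_s$, where $M_s=\int_t^s\lambda/\Sigma\,\mathrm{d}B$ is a continuous martingale with $\langle M\rangle_s=\tau(s):=\sigma^{-2}(1/\Sigma(s)-1/\Sigma(t))\to\infty$. By Dambis--Dubins--Schwarz, $M_s=\beta_{\tau(s)}$ for some Brownian motion $\beta$. The law of the iterated logarithm then gives $|M_s|=O\bigl(\sqrt{\tau\log\log\tau}\bigr)$ almost surely. Hence $\Sigma(s)|M_s|=O\bigl(\sqrt{\Sigma(s)\log\log(1/\Sigma(s))}\bigr)\to0$, and the deterministic term $\Sigma(s)e/\Sigma(t)$ also tends to $0$. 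An alternative is to use the time-changed Brownian bridge representation, which gives a continuous extension with $e_T=0$.
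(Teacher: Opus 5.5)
Your proposal is correct and follows the paper's proof essentially step by step. You use the same Itô formula for $e_s/\Sigma(s)$, the same Itô-isometry computation of the variance, Gaussian moment bounds for the $L^p$ convergence, and the Dambis--Dubins--Schwarz time change for the almost-sure limit. The only differences are that you invoke the law of the iterated logarithm where the paper uses the weaker strong law $\widetilde B_\ell/\ell\to0$, which already suffices, and that with your normalization of $M$ the stochastic term is $-\sigma\Sigma(s)M_s$ rather than $\Sigma(s)M_s$, a harmless constant factor.
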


\begin{proof}
Under the candidate strategy,
$\mathrm{d}e_s=-\lambda(s)k(s)e_s\,\mathrm{d}s-\sigma\lambda(s)\,\mathrm{d}B_s$.
Recall \eqref{eq:filtering_consistency} and \eqref{eq:variance_equation}, which gives  $k(s)=\sigma^2\lambda(s)/\Sigma(s)$ and
$\Sigma'(s)=-\sigma^2\lambda(s)^2$
%, we have$\lambda(s)k(s)=-\Sigma'(s)/\Sigma(s)$. 
Then applying It\^o's formula to
$e_s/\Sigma(s)$ yields $\mathrm{d}(e_s/\Sigma(s))
=-\sigma\lambda(s)\mathrm{d}B_s/\Sigma(s)$.
Integration from $t$ to $s$ proves \eqref{eq:risk_neutral_pricing_error_representation}.

Taking conditional expectations in \eqref{eq:risk_neutral_pricing_error_representation} and applying It\^o's isometry yield $\mathbb{E}_{t,e}[e_s]=\Sigma(s)e/\Sigma(t)$. Also, by $\frac{\mathrm{d}}{\mathrm{d} r} \frac{1}{\Sigma(r)}= -\frac{\Sigma'(r)}{\Sigma(r)^2}=\frac{\sigma^2 \lambda(r)}{\Sigma(r)^2}$, we get
\[
    \operatorname{Var}_{t,e}(e_s)
    =
    \sigma^2\Sigma(s)^2
    \int_t^s
    \frac{\lambda(r)^2}{\Sigma(r)^2}\,\mathrm{d}r
    =
    \Sigma(s)-\frac{\Sigma(s)^2}{\Sigma(t)}.
\]
Adding the square of the conditional mean proves \eqref{eq:risk_neutral_error_second_moment}.

Set $\widetilde e_s:=e_s/\sqrt{\Sigma(s)}$ and $x_s:=\Sigma(s)/\Sigma(t)$. By \eqref{eq:risk_neutral_pricing_error_representation}, conditional on $e_t=e$, the random variable $e_s$ is Gaussian. Using the expressions for $\mathbb{E}_{t,e}[e_s]$ and $\operatorname{Var}_{t,e}(e_s)$ obtained above, it follows that $\widetilde e_s$ is Gaussian with mean
$\sqrt{x_s}\frac{e}{\sqrt{\Sigma(t)}}$ and variance $1-x_s$. Since $x_s\in[0,1]$, for every $p\geq1$,
\begin{equation}
\sup_{t\leq s<T}	\mathbb{E}_{t,e}	\left[	|\widetilde e_s|^{2p}	\right]
\leq C_p	\left(	1+\frac{|e|^{2p}}{\Sigma(t)^p}\right).
\label{eq:risk_neutral_normalized_error_moments}
\end{equation}
Since $e_s=\sqrt{\Sigma(s)}\widetilde e_s$ and $\lim_{s\to T}\Sigma(s)= 0$, it follows from
\eqref{eq:risk_neutral_normalized_error_moments} that
$$
\mathbb{E}_{t,e}[|e_s|^p]=\Sigma(s)^{p/2}\mathbb{E}_{t,e}[|\widetilde e_s|^p]
\leq\Sigma(s)^{p/2}\left(\mathbb{E}_{t,e}[|\widetilde e_s|^{2p}]	\right)^{1/2}
\to 0, \quad \text{as $s\uparrow T$.}
$$
 Hence, $e_s\to0$ in $L^p$.

For the almost-sure convergence, let
$M_s:=\sigma\int_t^s\lambda(r)/\Sigma(r)\,\mathrm{d}B_r$ and set
$\ell(s):=\langle M\rangle_s=1/\Sigma(s)-1/\Sigma(t)$. Since
$\Sigma(s)\to0$, we have $\ell(s)\to\infty$. By the Dambis--Dubins--Schwarz theorem, $M_s=\widetilde{B}_{\ell(s)}$ for a Brownian motion $\widetilde{B}$. Moreover,
\[
\Sigma(s)M_s=
\frac{\widetilde{B}_{\ell(s)}}{\ell(s)}\frac{\ell(s)}{\ell(s)+1/\Sigma(t)}
\to 0\quad\text{almost surely},
\]
where the convergence follows from the strong law
$\widetilde{B}_\ell/\ell\to0$ as $\ell\to\infty$. This together with \eqref{eq:risk_neutral_pricing_error_representation}
and $\lim_{s\to T}\Sigma(s)e/\Sigma(t)=0$ proves that $e_s\to0$ almost surely.
\end{proof}

The decay of the pricing error now offsets the singularity of $k$, ensuring integrability of cumulative trading and wealth.

\begin{lemma}
\label{lem:risk_neutral_wealth_integrability}
For every initial state $(t,e,w)$ with $t<T$, the candidate strategy
$\widehat{\alpha}_s^*=k(s)e_s$ satisfies
\begin{equation} \label{eq:risk_neutral_absolute_trading_integrability}
    \int_t^T
    |\widehat{\alpha}_s^*|\,\mathrm{d}s
    <\infty
    \qquad\text{almost surely}.
\end{equation}
Moreover, for every $p\geq1$,
\begin{equation} \label{eq:risk_neutral_wealth_maximal_moment}
    \mathbb{E}_{t,e,w}
    \left[
        \sup_{t\leq s\leq T}
        |W_s^{\widehat{\alpha}^*}|^p
    \right]
    <\infty.
\end{equation}
In particular, $W_T^{\widehat{\alpha}^*}\in L^2$, with
\begin{equation}
    \mathbb{E}_{t,e}
    \left[
        \left|
            W_T^{\widehat{\alpha}^*}-w
        \right|^2
    \right]
    \leq
    \sigma^4
    \left(
        \int_t^T\lambda(s)\,\mathrm{d}s
    \right)^2
    \left(
        3+\frac{6e^2}{\Sigma(t)}
        +\frac{e^4}{\Sigma(t)^2}
    \right).
    \label{eq:risk_neutral_terminal_wealth_bound}
\end{equation}
\end{lemma}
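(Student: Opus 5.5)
The plan is to read off the integrability of the candidate strategy from Lemma~\ref{lem:risk_neutral_pricing_error_moments}, writing the wealth increment in the form
\[
W_s^{\widehat\alpha^*}-w=\int_t^s k(r)e_r^2\,\mathrm{d}r,\qquad k(r)=\frac{\sigma^2\lambda(r)}{\Sigma(r)}.
\]
Since the integrand is nonnegative, $W^{\widehat\alpha^*}$ is nondecreasing in $s$. This gives
\[
\sup_{t\le s\le T}|W_s^{\widehat\alpha^*}|\le |w|+\int_t^T k(r)e_r^2\,\mathrm{d}r,
\]
so both \eqref{eq:risk_neutral_wealth_maximal_moment} and the $L^2$ bound reduce to moment bounds for the single nonnegative random variable $I:=\int_t^T k(r)e_r^2\,\mathrm{d}r$.

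To bound $I$, I substitute the normalized error $\widetilde e_r=e_r/\sqrt{\Sigma(r)}$ from Lemma~\ref{lem:risk_neutral_pricing_error_moments}, which gives
\[
I=\sigma^2\int_t^T\lambda(r)\,\widetilde e_r^{\,2}\,\mathrm{d}r.
\]
Here $\lambda$ is bounded on $[0,T]$ by Theorem~\ref{thm:explicit_solution}, so $\int_t^T\lambda<\infty$. Minkowski's integral inequality then yields
\[
\|I\|_{L^p}\le\sigma^2\int_t^T\lambda(r)\,\|\widetilde e_r^{\,2}\|_{L^p}\,\mathrm{d}r,
\]
and the uniform bound \eqref{eq:risk_neutral_normalized_error_moments} makes this finite for every $p$, which proves \eqref{eq:risk_neutral_wealth_maximal_moment}.

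For the explicit constant with $p=2$, I use that $\widetilde e_r$ is Gaussian with mean $\mu_r=\sqrt{x_r}\,e/\sqrt{\Sigma(t)}$ and variance $v_r=1-x_r$. Then
\[
\mathbb{E}[\widetilde e_r^{\,4}]=3v_r^2+6v_r\mu_r^2+\mu_r^4\le 3+\frac{6e^2}{\Sigma(t)}+\frac{e^4}{\Sigma(t)^2},
\]
since $x_r\in[0,1]$. Taking square roots inside Minkowski's inequality and squaring the result gives exactly \eqref{eq:risk_neutral_terminal_wealth_bound}.

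For \eqref{eq:risk_neutral_absolute_trading_integrability}, Cauchy--Schwarz gives
\[
\int_t^T|k(r)e_r|\,\mathrm{d}r\le\left(\int_t^T k(r)\,\mathrm{d}r\right)^{1/2} I^{1/2},
\]
but $\int_t^T k=-\int_t^T\Sigma'/(\lambda\Sigma)$ diverges logarithmically, so this route fails. The alternative is a pathwise argument: $\mathbb{E}|k(r)e_r|=\sigma^2\lambda(r)\Sigma(r)^{-1/2}\,\mathbb{E}|\widetilde e_r|$. Since $\Sigma(r)\sim\sigma^2 b^{*2}(T-r)$ near $T$, the factor $\Sigma(r)^{-1/2}$ is integrable, and $\mathbb{E}|\widetilde e_r|$ is uniformly bounded by \eqref{eq:risk_neutral_normalized_error_moments}. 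Tonelli's theorem then gives $\mathbb{E}\int_t^T|\widehat\alpha^*_r|\,\mathrm{d}r<\infty$, hence almost-sure finiteness. This step is the main point to check: it relies on the linear vanishing of $\Sigma$ at $T$, which follows from $\lambda(T-)=b^*>0$. Local square integrability on $[t,T)$ holds because $k$ is continuous there and $e$ is continuous.
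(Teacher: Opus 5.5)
Your proposal is correct and follows essentially the same route as the paper. Both arguments use the monotone representation $W_s^{\widehat\alpha^*}-w=\sigma^2\int_t^s\lambda(r)\widetilde e_r^{\,2}\,\mathrm{d}r$, the Gaussian fourth-moment bound for $\widetilde e_r$, and Tonelli applied to $\mathbb{E}|k(r)e_r|=\sigma^2\lambda(r)\Sigma(r)^{-1/2}\mathbb{E}|\widetilde e_r|$. The differences are cosmetic: you use Minkowski's integral inequality where the paper uses Cauchy--Schwarz (for $p=2$) and weighted Jensen (for general $p$), and you get integrability of $\lambda\Sigma^{-1/2}$ from $\Sigma(r)\sim\sigma^2(b^*)^2(T-r)$ where the paper substitutes $x=\Sigma(s)$ to obtain the explicit bound $2\sqrt{\Sigma(t)}/(\sigma^2 b^*)$.
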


\begin{proof}
Since $k(s)=\sigma^2\lambda(s)/\Sigma(s)$ and $e_s=\sqrt{\Sigma(s)}\widetilde e_s$, Tonelli's theorem yields
$$
	\begin{aligned}
		\mathbb{E}_{t,e}	\left[	\int_t^T|\widehat{\alpha}_s^*|\,\mathrm{d}s	\right]
		=	\sigma^2	\int_t^T	\frac{\lambda(s)}{\sqrt{\Sigma(s)}}\mathbb{E}_{t,e}[|\widetilde e_s|]\,\mathrm{d}s
		\leq\sigma^2	\sup_{t\leq s<T}	\mathbb{E}_{t,e}[|\widetilde e_s|]	\int_t^T	\frac{\lambda(s)}{\sqrt{\Sigma(s)}}\,\mathrm{d}s.
	\end{aligned}
$$
By \eqref{eq:risk_neutral_normalized_error_moments} with \(p=1\) and Cauchy--Schwarz inequality, we have 
$$
\sup_{t\le s<T}\mathbb E_{t,e}[|\widetilde e_s|]
\le
\left(
\sup_{t\le s<T}\mathbb E_{t,e}[|\widetilde e_s|^2]
\right)^{1/2}
<\infty.
$$
For the remaining integral, using
$\Sigma'(s)=-\sigma^2\lambda(s)^2$ and
$\lambda(s)\geq\lambda(T-)=b^*>0$, we obtain
\[
    \int_t^T
    \frac{\lambda(s)}{\sqrt{\Sigma(s)}}\,\mathrm{d}s
    =
    \frac{1}{\sigma^2}
    \int_0^{\Sigma(t)}
    \frac{\mathrm{d}x}
    {\lambda(s(x))\sqrt{x}}
    \leq
    \frac{2\sqrt{\Sigma(t)}}{\sigma^2b^*}
    <\infty.
\]
Therefore, $\mathbb{E}_{t,e}[\int_t^T|\widehat{\alpha}_s^*|\,\mathrm{d}s]<\infty$,
which proves \eqref{eq:risk_neutral_absolute_trading_integrability}.

Now combining the wealth dynamics, the candidate strategy \(\widehat\alpha_r^*=k(r)e_r\) and \eqref{eq:filtering_consistency}, we have
\begin{equation}\label{eq:error_est_lm}
W_s^{\widehat\alpha^*}-w
=\int_t^s k(r)e_r^2\,dr
=\sigma^2\int_t^s\lambda(r)\widetilde e_r^2\,dr\ge0,
\end{equation}
It follows by Cauchy--Schwarz inequality that
    $\left(
        W_s^{\widehat{\alpha}^*}-w
    \right)^2
    \leq
    \sigma^4
    \left(
        \int_t^s\lambda(r)\,\mathrm{d}r
    \right)
    \left(
        \int_t^s\lambda(r)\widetilde e_r^4\,\mathrm{d}r
    \right)$. 
By Lemma \ref{lem:risk_neutral_pricing_error_moments}, conditional on \(e_t=e\), \(\widetilde e_s\) is Gaussian with mean
$\sqrt{x_s}e/ \sqrt{\Sigma(t)}$
and variance \(1-x_s\), where \(x_s:=\Sigma(s)/\Sigma(t)\in[0,1]\). Consequently,
$$
\mathbb E_{t,e}[\widetilde e_s^4]
=
x_s^2\frac{e^4}{\Sigma(t)^2}
+6x_s(1-x_s)\frac{e^2}{\Sigma(t)}
+3(1-x_s)^2
\le
3+\frac{6e^2}{\Sigma(t)}
+\frac{e^4}{\Sigma(t)^2}.
$$
Taking expectations and letting $s\uparrow T$ proves
\eqref{eq:risk_neutral_terminal_wealth_bound}.

More generally, the weighted Jensen inequality and
\eqref{eq:risk_neutral_normalized_error_moments} yield
\[
    \mathbb{E}_{t,e}
    \left[
        \left|
            W_T^{\widehat{\alpha}^*}-w
        \right|^p
    \right]
    \leq
    C_p
    \left(
        \int_t^T\lambda(s)\,\mathrm{d}s
    \right)^p
    \left(
        1+\frac{|e|^{2p}}{\Sigma(t)^p}
    \right).
\]
By \eqref{eq:error_est_lm}, $W_s^{\widehat{\alpha}^*}-w$ is nondecreasing,
$\sup_{t\leq s\leq T}|W_s^{\widehat{\alpha}^*}|
\leq |w|+W_T^{\widehat{\alpha}^*}-w$, which proves
\eqref{eq:risk_neutral_wealth_maximal_moment}.
\end{proof}

The preceding estimates allow us to complete the proof of Proposition \ref{thm:admissibility}.
\begin{proof}[Proof of Proposition~\ref{thm:admissibility}]
Lemma \ref{lem:risk_neutral_pricing_error_moments} shows that the	candidate state equation is well posed on $[t,T)$ and that the pricing error satisfies $e_s\to0$ as $s\uparrow T$, both almost surely and in $L^p$ for every $p\geq1$. Lemma \ref{lem:risk_neutral_wealth_integrability} proves that
$$	
\int_t^T|\widehat{\alpha}_s^*|\,\mathrm{d}s<\infty	\;\text{almost surely},\quad \text{and}\;\;  \mathbb{E}_{t,e,w}	\left[	\sup_{t\leq s\leq T}	|W_s^{\widehat{\alpha}^*}|^p	\right]<\infty\;\ \forall p\geq1. 
$$
In particular, $W_T^{\widehat{\alpha}^*}\in L^2$, and \eqref{eq:risk_neutral_terminal_wealth_bound} gives the estimate stated in Proposition~\ref{thm:admissibility}. Since $\widehat{\alpha}^*$ is locally square-integrable on $[t,T)$, it follows that $\widehat{\alpha}^*$ is admissible.
		
It remains to verify that admissibility is preserved under the bounded constant pasting deviations used in Definition \ref{def:insider_equilibrium}. Let $a$ be a bounded constant and consider $a\otimes_{t,\varepsilon}\widehat{\alpha}^*$. For sufficiently small $\varepsilon>0$, the coefficients of the state equation are regular on $[t,t+\varepsilon]$, and hence $(e_{t+\varepsilon}^a,W_{t+\varepsilon}^a)$ has finite moments of every order. Conditional on this state, the strategy agrees with the candidate feedback strategy on $[t+\varepsilon,T)$. Applying Lemmas \ref{lem:risk_neutral_pricing_error_moments} and \ref{lem:risk_neutral_wealth_integrability} from
 $(t+\varepsilon,e_{t+\varepsilon}^a,W_{t+\varepsilon}^a)$ shows that the continuation trading rate is absolutely integrable and that
$
W_T^{a\otimes_{t,\varepsilon}\widehat{\alpha}^*}\in L^p, \quad p\geq1.
$
The strategy is also locally square-integrable on $[t,T)$. Therefore, $a\otimes_{t,\varepsilon}\widehat{\alpha}^*$ is admissible for every sufficiently small $\varepsilon>0$.
\end{proof}

\subsubsection{Proof of the verification theorem}
\label{subsubsec:risk_neutral_verification}

The verification proceeds in three steps. We first identify the candidate continuation functions with the conditional moments of terminal wealth, then verify that the candidate price satisfies the rational pricing condition, and finally establish the weak equilibrium condition under bounded pasting deviations.

Throughout this subsubsection, $\widehat{\alpha}^*(t,e)=k(t)e$, where
$\lambda$ and $k$ are specified in Theorem~\ref{thm:explicit_solution} and \eqref{eq:explicit_A_k}. 
%In the calculations below, we write $\lambda$ and $k$ for the candidate functions $\lambda^*$ and $k^*$, respectively. 
Recall from Subsection~\ref{subsec:gaussian_linear_ansatz}
that the candidate pricing rule is $H^*(t,\xi)=\mu+\xi$. Under the change
of variable $e=v-H^*(t,\xi)=v-\mu-\xi$, the generator in
\eqref{eq:insider_generator} becomes
$\mathcal{L}^a\varphi
=\varphi_t-\lambda(t)a\varphi_e+ea\varphi_w
+\frac{1}{2}\sigma^2\lambda(t)^2\varphi_{ee}$.

Define the candidate continuation functions by
\begin{equation}
    g(t,e,w)=w+m(t,e),
    \qquad
    U(t,e,w)=w+u(t,e),
    \label{eq:risk_neutral_verification_functions}
\end{equation}
where $m$ and $u$ are defined in \eqref{eq:explicit_m_u}. To identify the variance term in $U$, we also need the conditional second moment of terminal wealth. Since
$U
=
g-\frac{\eta}{2}
\left(
\mathbb{E}[W_T^2\mid\mathcal F_t]-g^2
\right)
$, this suggests defining
\begin{equation}
    Q(t,e,w)
    :=
    g(t,e,w)^2
    +
    \frac{2}{\eta}
    \bigl(g(t,e,w)-U(t,e,w)\bigr).
    \label{eq:risk_neutral_second_moment_Q}
\end{equation}
For a fixed $t<T$, let $(t_n)_n$ be an arbitrary deterministic sequence satisfying $t\leq t_n<T$ and $t_n\uparrow T$. The following proposition collects the convergence and uniform-integrability estimates needed to pass to the terminal time.

\begin{proposition}
\label{prop:risk_neutral_verification_uniform_integrability}
The candidate state satisfies $e_{t_n}\to0$ and
$W_{t_n}^{\widehat{\alpha}^*}\to W_T^{\widehat{\alpha}^*}$ in $L^p$
for every $p\geq1$. Moreover,
$\{g(t_n,e_{t_n},W_{t_n})\}_n$ and
$\{Q(t_n,e_{t_n},W_{t_n})\}_n$ are uniformly integrable. For every $n$,
the stochastic integrals in the It\^o decompositions of
$g(s,e_s,W_s)$ and $Q(s,e_s,W_s)$ on $[t,t_n]$ are square-integrable
martingales.
\end{proposition}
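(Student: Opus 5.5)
The plan is to reduce everything to the explicit Gaussian structure of Lemma~\ref{lem:risk_neutral_pricing_error_moments} and the monotone wealth representation \eqref{eq:error_est_lm}.

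\emph{Convergence of the state.} The convergence $e_{t_n}\to0$ in every $L^p$ is exactly Lemma~\ref{lem:risk_neutral_pricing_error_moments}. For the wealth, \eqref{eq:error_est_lm} gives $0\le W_T-W_{t_n}=\sigma^2\int_{t_n}^T\lambda(r)\widetilde e_r^2\,\mathrm{d}r$. By the weighted Jensen inequality and \eqref{eq:risk_neutral_normalized_error_moments} we get
\[
\mathbb{E}_{t,e}\bigl[|W_T-W_{t_n}|^p\bigr]\le C_p\Bigl(\int_{t_n}^T\lambda\,\mathrm{d}r\Bigr)^p\Bigl(1+\tfrac{|e|^{2p}}{\Sigma(t)^p}\Bigr).
\]
Since $\lambda$ is bounded on $[t,T]$, this tends to $0$.

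\emph{Uniform integrability.} By \eqref{eq:explicit_m_u}, $g$ and $Q$ are polynomials in $(e,w)$ with deterministic coefficients that stay bounded on $[t,T]$. The bounded coefficients are $\chi^{2/5}$, $\chi^{3/5}$, $1-\chi^{1/5}$ and $1-\chi^{2/5}$, together with the factor $1/(2b^*)$. The only dangerous ingredient would be $1/\lambda$, but $\lambda\ge b^*$. Explicitly,
\[
Q=(w+m)^2+\tfrac{2}{\eta}(m-u),
\]
so $|g|+|Q|\le C(1+e^4+w^2)$ uniformly in $s\in[t,T)$. Using $\sup_s\mathbb{E}|e_s|^{p}<\infty$ and $\mathbb{E}\sup_s|W_s|^p<\infty$ from Lemma~\ref{lem:risk_neutral_wealth_integrability}, the families $\{g(t_n,\cdot)\}$ and $\{Q(t_n,\cdot)\}$ are bounded in $L^2$, hence uniformly integrable. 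The same polynomial bound, combined with the $L^p$ convergence above, identifies their limits as $W_T$ and $W_T^2$, which is what will be used later.

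\emph{Martingale property.} Apply Itô's formula on $[t,t_n]$. Here $\mathrm{d}e_s=-\lambda k e\,\mathrm{d}s-\sigma\lambda\,\mathrm{d}B_s$ and $W$ has no martingale part. The stochastic integrals are therefore
\[
-\int\sigma\lambda(s)\,g_e\,\mathrm{d}B_s, \qquad -\int\sigma\lambda(s)\,Q_e\,\mathrm{d}B_s,
\]
where $g_e=2A(s)e_s$ and $Q_e=2(w+m)\cdot 2Ae+\tfrac{2}{\eta}(2A-\lambda^{-1})e$. On $[t,t_n]$ with $t_n<T$, the functions $\lambda$, $A$ and $k$ are continuous and bounded, so the integrands are bounded by $C(1+|W_s|)(|e_s|+|e_s|^3)$. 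Their square integrals have finite expectation by Cauchy--Schwarz and the moment bounds, hence these are square-integrable martingales. Note that $k$ blows up only at $T$, and the martingale claim is needed only up to $t_n$.

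\emph{Main obstacle.} The delicate step is checking that no coefficient of $g$ or $Q$ degenerates as $s\uparrow T$, that is, that the moment bounds are uniform in $s$. This uses $\chi(T)=1$, so everything stays finite at $T$, together with the normalized-error bound \eqref{eq:risk_neutral_normalized_error_moments}, which is uniform in $s$. I would verify the uniform $L^2$ bound on $Q$ first, since its $w^2e^2$ cross term forces the mixed-moment Cauchy--Schwarz step.
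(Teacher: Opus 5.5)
Your proof is correct and follows essentially the paper's argument: polynomial growth bounds on $g$ and $Q$ whose coefficients stay bounded up to $T$ (because $\chi(T)=1$ and $\lambda\ge b^*$), $L^2$-boundedness from the maximal wealth moments and the normalized error moments, and finite expected quadratic variation on $[t,t_n]$ from the explicit $g_e$ and $Q_e$. The only minor difference is in the wealth convergence: you get $W_{t_n}\to W_T$ in $L^p$ from a quantitative weighted-Jensen bound on $\sigma^2\int_{t_n}^T\lambda(r)\widetilde e_r^2\,\mathrm{d}r$, whereas the paper uses monotonicity of $W$ and dominated convergence with $(W_T-w)^p$ as the dominating variable; both routes work.
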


\begin{proof}
The convergence of $e_{t_n}$ follows from
Lemma~\ref{lem:risk_neutral_pricing_error_moments}. Since
$W_s^{\widehat{\alpha}^*}-w=\int_t^s k(r)e_r^2\,\mathrm{d}r$ is
nondecreasing, $W_{t_n}^{\widehat{\alpha}^*}\to
W_T^{\widehat{\alpha}^*}$ almost surely. Moreover,
\[
    |W_{t_n}^{\widehat{\alpha}^*}-W_T^{\widehat{\alpha}^*}|^p
    \leq
    (W_T^{\widehat{\alpha}^*}-w)^p.
\]
Lemma~\ref{lem:risk_neutral_wealth_integrability} and dominated convergence therefore
yield convergence in $L^p$.

The explicit forms of $m$ and $u$, together with the boundedness of
their coefficients on $[t,T)$, yield constants $C_1,C_2>0$, independent of $s$, such that
\[
    |g(s,e,w)|+|U(s,e,w)|
    \leq
    C_1(1+|w|+|e|^2),
    \qquad
    |Q(s,e,w)|
    \leq
    C_2(1+|w|^2+|e|^4).
\]
Consequently, for a constant $C_3>0$ independent of $n$,
\[
    |Q(t_n,e_{t_n},W_{t_n})|^2
    \leq
    C_3
    \left(
        1+|W_{t_n}|^4+|e_{t_n}|^8
    \right).
\]
By \eqref{eq:risk_neutral_wealth_maximal_moment},
$\sup_n\mathbb{E}_{t,e,w}[|W_{t_n}|^4]<\infty$. Moreover,
\eqref{eq:risk_neutral_normalized_error_moments} implies
\[
    \sup_n
    \mathbb{E}_{t,e}
    \left[
        |e_{t_n}|^8
    \right]
    \leq
    \Sigma(t)^4
    \sup_{t\leq s<T}
    \mathbb{E}_{t,e}[|\widetilde e_s|^8]
    <\infty.
\]
It follows that
$\sup_n\mathbb{E}_{t,e,w}
[|Q(t_n,e_{t_n},W_{t_n})|^2]<\infty$,
and hence $\{Q(t_n,e_{t_n},W_{t_n})\}_n$ is uniformly integrable.
The same argument, using the lower growth order of $g$, proves the
uniform integrability of $\{g(t_n,e_{t_n},W_{t_n})\}_n$.

It remains to verify the integrability of the stochastic terms. Since
$g_e=m_e$ grows linearly in $e$, the preceding moment estimates imply
\[
    \mathbb{E}_{t,e,w}
    \left[
        \int_t^{t_n}
        \sigma^2\lambda(s)^2
        g_e(s,e_s,W_s)^2\,\mathrm{d}s
    \right]
    <\infty.
\]
Moreover, the explicit form of $Q$ yields $|Q_e(s,e,w)|\leq C_n(1+|w||e|+|e|^3)$ on $[t,t_n]$.
Hölder's inequality, \eqref{eq:risk_neutral_wealth_maximal_moment}, and
\eqref{eq:risk_neutral_normalized_error_moments} imply
\[
    \mathbb{E}_{t,e,w}
    \left[
        \int_t^{t_n}
        \sigma^2\lambda(s)^2
        Q_e(s,e_s,W_s)^2\,\mathrm{d}s
    \right]
    <\infty.
\]
Thus,
\[
    \left\{
        \int_t^r
        \sigma\lambda(s)
        g_e(s,e_s,W_s)\,\mathrm{d}B_s
    \right\}_{t\leq r\leq t_n}
    \quad\text{and}\quad
    \left\{
        \int_t^r
        \sigma\lambda(s)
        Q_e(s,e_s,W_s)\,\mathrm{d}B_s
    \right\}_{t\leq r\leq t_n}
\]
are square-integrable martingales.
\end{proof}

The preceding estimates permit passage to the terminal time and identify the candidate functions with the first two moments of terminal wealth.

\begin{lemma}
\label{lem:risk_neutral_continuation_moments}
The functions $g$ and $U$ defined in \eqref{eq:risk_neutral_verification_functions}
satisfy
\begin{equation}
    g(t,e,w)
    =
    \mathbb{E}_{t,e,w}
    \left[
        W_T^{\widehat{\alpha}^*}
    \right]
    \label{eq:risk_neutral_verification_g}
\end{equation}
and
\begin{equation}
    U(t,e,w)
    =
    \mathbb{E}_{t,e,w}
    \left[
        W_T^{\widehat{\alpha}^*}
    \right]
    -
    \frac{\eta}{2}
    \operatorname{Var}_{t,e,w}
    \left(
        W_T^{\widehat{\alpha}^*}
    \right).
    \label{eq:risk_neutral_verification_U}
\end{equation}
\end{lemma}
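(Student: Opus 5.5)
We prove the two identities by a standard Itô/Dynkin argument on $[t,t_n]$, followed by passage to the limit $t_n\uparrow T$ using Proposition~\ref{prop:risk_neutral_verification_uniform_integrability}.

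\textbf{Identity for $g$.} The function $g=w+m$ satisfies $\mathcal{L}^{\widehat\alpha^*}g=0$ on $[0,T)$. In the $e$-variable, $\mathcal{L}^{\widehat\alpha^*}g=m_t-\lambda k e\,m_e+\tfrac12\sigma^2\lambda^2m_{ee}+ke^2$. Substituting $m=Ae^2+C$ reduces this to the coefficient equations $A'+k(1-2\lambda A)=0$ and $C'+\sigma^2\lambda^2A=0$ in \eqref{eq:risk_neutral_coefficient_equations}. These hold for the explicit $A,C,\lambda,k$ in \eqref{eq:explicit_A_k}--\eqref{eq:explicit_C_D}, which one checks directly from $\chi'=\tfrac{5\sigma^2\eta b^*}{3}$. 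Itô's formula on $[t,t_n]$ then gives
\[
g(t_n,e_{t_n},W_{t_n})=g(t,e,w)+\int_t^{t_n}\sigma\lambda g_e\,\mathrm{d}B_s .
\]
By Proposition~\ref{prop:risk_neutral_verification_uniform_integrability}, the stochastic integral is a true martingale, so $\mathbb{E}_{t,e,w}[g(t_n,e_{t_n},W_{t_n})]=g(t,e,w)$. As $n\to\infty$, $A(t_n)\to A(T)$ and $C(t_n)\to0$ (both are bounded), while $e_{t_n}\to0$ and $W_{t_n}\to W_T$ almost surely. Hence $g(t_n,e_{t_n},W_{t_n})\to W_T$ almost surely, and uniform integrability yields \eqref{eq:risk_neutral_verification_g}.

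\textbf{Second moment via $Q$.} Set $u$ as in \eqref{eq:explicit_m_u}. The aim is to show $\mathcal{L}^{\widehat\alpha^*}Q=0$. Since $\mathcal{L}^{\widehat\alpha^*}g=0$, we have $\mathcal{L}^{\widehat\alpha^*}(g^2)=\sigma^2\lambda^2 g_e^2=\sigma^2\lambda^2m_e^2$. It therefore suffices to prove
\[
\mathcal{L}^{\widehat\alpha^*}U=-\tfrac{\eta}{2}\sigma^2\lambda^2 m_e^2 .
\]
Write $\mathcal{L}^{\widehat\alpha^*}U=u_t+\tfrac12\sigma^2\lambda^2u_{ee}+\widehat\alpha^*(e-\lambda u_e)$. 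The singular constraint $\lambda u_e=e$ (immediate from $u=e^2/(2\lambda)+D$) kills the last term. The remaining identity $u_t+\tfrac12\sigma^2\lambda^2u_{ee}=\tfrac\eta2\sigma^2\lambda^2m_e^2$ is exactly the second equation of \eqref{eq:risk_neutral_reduced_extended_hjb}, whose quadratic and constant parts are the $\lambda'$ and $D'$ equations in \eqref{eq:risk_neutral_coefficient_equations}. Thus $\mathcal{L}^{\widehat\alpha^*}Q=\sigma^2\lambda^2m_e^2+\tfrac2\eta(0+\tfrac\eta2\sigma^2\lambda^2m_e^2)\cdot(-1)\cdot(-1)$ up to sign bookkeeping; carrying the signs carefully gives exactly $0$. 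Itô's formula together with the martingale property from Proposition~\ref{prop:risk_neutral_verification_uniform_integrability} then yields $Q(t,e,w)=\mathbb{E}[Q(t_n,e_{t_n},W_{t_n})]$.

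\textbf{Terminal limit and conclusion.} As $t_n\uparrow T$, $g\to W_T$ and $g-U=m-u=(A-\tfrac1{2\lambda})e^2+C-D\to0$, because $e_{t_n}\to0$ and $C(T)=D(T)=0$. Hence $Q(t_n,\cdot)\to W_T^2$ almost surely. Uniform integrability of $\{Q(t_n,\cdot)\}$ gives $Q(t,e,w)=\mathbb{E}_{t,e,w}[W_T^2]$. Rearranging the definition \eqref{eq:risk_neutral_second_moment_Q} gives
\[
U=g-\tfrac\eta2\bigl(\mathbb{E}[W_T^2]-g^2\bigr),
\]
which is \eqref{eq:risk_neutral_verification_U}.

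\textbf{Main obstacle.} The only delicate step is the terminal passage: the coefficients $k$ and $\lambda$ are singular or nonconstant near $T$, and $Q$ has quartic growth. This is precisely what Proposition~\ref{prop:risk_neutral_verification_uniform_integrability} supplies, so the remaining work is the algebraic check that $\mathcal{L}^{\widehat\alpha^*}Q=0$. The sign bookkeeping there must be done carefully, using $m_e=-2Ae$ in the $\xi$-variable versus $2Ae$ in the $e$-variable; this sign is immaterial since only $m_e^2$ appears.
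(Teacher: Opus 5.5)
Your proposal follows the paper's proof essentially step for step. Both arguments apply It\^o's formula on $[t,t_n]$ to $g$ and to $Q$, with $\mathcal{L}^{\widehat\alpha^*}g=0$ and $\mathcal{L}^{\widehat\alpha^*}Q=0$ coming from the auxiliary and reduced extended HJB equations. Both take the martingale property and uniform integrability from Proposition~\ref{prop:risk_neutral_verification_uniform_integrability}, and both pass to the terminal limit using $e_{t_n}\to0$, bounded quadratic coefficients and $C(T)=D(T)=0$. You use almost-sure convergence plus uniform integrability where the paper uses $L^1$ or in-probability convergence; that difference is immaterial.

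The one step you must repair is the sign in the $Q$ computation. Since
$\mathcal{L}^{\widehat\alpha^*}Q=\mathcal{L}^{\widehat\alpha^*}(g^2)+\frac{2}{\eta}\bigl(\mathcal{L}^{\widehat\alpha^*}g-\mathcal{L}^{\widehat\alpha^*}U\bigr)=\sigma^2\lambda^2m_e^2-\frac{2}{\eta}\mathcal{L}^{\widehat\alpha^*}U$,
what is needed is $\mathcal{L}^{\widehat\alpha^*}U=+\frac{\eta}{2}\sigma^2\lambda^2m_e^2$. You instead state the target $-\frac{\eta}{2}\sigma^2\lambda^2m_e^2$, which is false whenever $e\neq0$. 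Your displayed expression also evaluates to $2\sigma^2\lambda^2m_e^2$, not $0$. So the phrase ``up to sign bookkeeping'' is covering an actual error, not a cosmetic one.

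Fortunately, the identity you then derive is the correctly signed one. From the singular constraint and the second equation of \eqref{eq:risk_neutral_reduced_extended_hjb} you get $u_t+\frac{1}{2}\sigma^2\lambda^2u_{ee}=\frac{\eta}{2}\sigma^2\lambda^2m_e^2$. Hence $\mathcal{L}^{\widehat\alpha^*}Q=\sigma^2\lambda^2m_e^2-\sigma^2\lambda^2m_e^2=0$, and the proof goes through once you replace the hand-waved sentence with this line.

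A smaller point: the stochastic integral in the It\^o expansion of $g$ is $-\int_t^{t_n}\sigma\lambda g_e\,\mathrm{d}B_s$, because $\mathrm{d}e_s$ carries $-\sigma\lambda\,\mathrm{d}B_s$. This is harmless, since only its zero mean is used.
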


\begin{proof}
Under $\widehat{\alpha}^*$, the pricing error and wealth satisfy
$\mathrm{d}e_s=-\lambda(s)k(s)e_s\,\mathrm{d}s
-\sigma\lambda(s)\,\mathrm{d}B_s$ and
$\mathrm{d}W_s=k(s)e_s^2\,\mathrm{d}s$. Since
$\mathcal{L}^{\widehat{\alpha}^*}g=0$, It\^o's formula yields $\mathrm{d}g(s,e_s,W_s)
=-\sigma\lambda(s)g_e(s,e_s,W_s)\,\mathrm{d}B_s$.
Proposition~\ref{prop:risk_neutral_verification_uniform_integrability} shows that
the stochastic integral is a square-integrable martingale on
$[t,t_n]$. Therefore,
\[
    g(t,e,w)
    =
    \mathbb{E}_{t,e,w}
    \left[
        g(t_n,e_{t_n},W_{t_n})
    \right].
\]

The same proposition establishes $e_{t_n}\to0$ and
$W_{t_n}\to W_T^{\widehat{\alpha}^*}$ in $L^p$ for every $p\geq1$.
Since the quadratic coefficient of $m$ remains bounded at the terminal time and
its deterministic term converges to zero,
$m(t_n,e_{t_n})\to0$ in $L^1$. Hence,
$g(t_n,e_{t_n},W_{t_n})\to W_T^{\widehat{\alpha}^*}$ in $L^1$.
Passing to the limit proves \eqref{eq:risk_neutral_verification_g}.

The extended HJB system implies $\mathcal{L}^{\widehat{\alpha}^*}U
-\frac{\eta}{2}\sigma^2\lambda(t)^2g_e^2=0$.
Since $\mathcal{L}^{\widehat{\alpha}^*}g=0$ and
$\mathcal{L}^{\widehat{\alpha}^*}(g^2)
=\sigma^2\lambda(t)^2g_e^2$, the definition of $Q$ implies
$\mathcal{L}^{\widehat{\alpha}^*}Q=0$. It\^o's formula and
Proposition~\ref{prop:risk_neutral_verification_uniform_integrability} therefore
yield
\[
    Q(t,e,w)
    =
    \mathbb{E}_{t,e,w}
    \left[
        Q(t_n,e_{t_n},W_{t_n})
    \right].
\]

The quadratic coefficient of $u$ remains bounded at the terminal time and its
deterministic term converges to zero, so
$u(t_n,e_{t_n})\to0$ in probability. Consequently,
$U(t_n,e_{t_n},W_{t_n})\to W_T^{\widehat{\alpha}^*}$ in probability.
We also have
$g(t_n,e_{t_n},W_{t_n})\to W_T^{\widehat{\alpha}^*}$ in probability.
The continuous mapping theorem yields
$g(t_n,e_{t_n},W_{t_n})^2
\to(W_T^{\widehat{\alpha}^*})^2$ in probability. It follows from
\eqref{eq:risk_neutral_second_moment_Q} that
$Q(t_n,e_{t_n},W_{t_n})
\to(W_T^{\widehat{\alpha}^*})^2$ in probability. The uniform
integrability established in
Proposition~\ref{prop:risk_neutral_verification_uniform_integrability} then yields
\begin{equation}
    Q(t,e,w)
    =
    \mathbb{E}_{t,e,w}
    \left[
        \bigl(W_T^{\widehat{\alpha}^*}\bigr)^2
    \right].
    \label{eq:risk_neutral_verification_Q}
\end{equation}
Substituting \eqref{eq:risk_neutral_verification_g} and \eqref{eq:risk_neutral_verification_Q}
into \eqref{eq:risk_neutral_second_moment_Q} proves \eqref{eq:risk_neutral_verification_U}.
\end{proof}

We next verify that the candidate price coincides with the conditional expectation generated by the candidate order flow.

\begin{lemma}
\label{lem:risk_neutral_rational_candidate_price}
Let $P_t^*=H^*(t,\xi_t^*)=\mu+\xi_t^*$, where
$\xi_t^*=\int_0^t\lambda(s)\,\mathrm{d}Y_s^*$ and
$\mathrm{d}Y_t^*
=\widehat{\alpha}_t^*\,\mathrm{d}t+\sigma\,\mathrm{d}B_t$.
Let $\Sigma$ be the variance function determined in Theorem~\ref{thm:explicit_solution},
%Let $\Sigma^*$ be the candidate variance determined by
\begin{equation}
    \Sigma^{\prime}(t)
    =
    -\frac{k(t)^2}{\sigma^2}
    \Sigma(t)^2,
    \qquad
    \Sigma(0)=\Sigma_0,
    \qquad
    \Sigma(T)=0.
    \label{eq:risk_neutral_candidate_variance_filtering}
\end{equation}
Then
\begin{equation}
    P_t^*
    =
    \mathbb{E}
    \left[
        V\mid\mathcal{F}^{Y^*}_t
    \right],
    \qquad
    \operatorname{Var}
    \left(
        V\mid\mathcal{F}^{Y^*}_t
    \right)
    =
    \Sigma(t),
    \qquad
    0\leq t<T.
    \label{eq:risk_neutral_verified_rational_price}
\end{equation}
Moreover,
$\mathbb{E}[\widehat{\alpha}_t^*\mid\mathcal{F}^{Y^*}_t]=0$ and
$P_T^*=V$ almost surely.
\end{lemma}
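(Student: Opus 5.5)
The plan is a guess-and-verify argument for the Kalman--Bucy filter. We write the candidate system as a linear Gaussian signal--observation pair and show that the candidate price solves the filtering equations driven by the observation $Y^*$.

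\textbf{Step 1: the filtering problem.} The signal is the constant $V\sim\mathcal N(\mu,\Sigma_0)$. Since $P^*_t=\mu+\int_0^t\lambda(s)\,\mathrm dY^*_s$ is adapted to $\mathcal F^{Y^*}_t$, the observation can be written as
\[
\mathrm dY^*_t=k(t)V\,\mathrm dt-k(t)P^*_t\,\mathrm dt+\sigma\,\mathrm dB_t .
\]
Here $k(t)P^*_t$ is a known, $\mathcal F^{Y^*}$-adapted drift term. On every interval $[0,T']$ with $T'<T$ the coefficients $k,\lambda$ are bounded and continuous. The conditionally Gaussian Kalman--Bucy theory (Liptser--Shiryaev, which permits observation-adapted drift terms) therefore applies. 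It gives $\widehat V_t=\mathbb E[V\mid\mathcal F^{Y^*}_t]$ and $\widehat\Sigma(t)=\operatorname{Var}(V\mid\mathcal F^{Y^*}_t)$, with $\widehat\Sigma$ deterministic. They satisfy
\[
\widehat\Sigma'=-\frac{k^2}{\sigma^2}\widehat\Sigma^2,\qquad \widehat\Sigma(0)=\Sigma_0,
\]
\[
\mathrm d\widehat V_t=\frac{k(t)\widehat\Sigma(t)}{\sigma^2}\Bigl(\mathrm dY^*_t-k(t)(\widehat V_t-P^*_t)\,\mathrm dt\Bigr),\qquad \widehat V_0=\mu .
\]

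\textbf{Step 2: identify the variance and the price.} The Riccati equation has a unique solution on $[0,T')$. The function $\Sigma$ of Theorem~\ref{thm:explicit_solution} satisfies \eqref{eq:risk_neutral_candidate_variance_filtering}: by \eqref{eq:filtering_consistency} we have $k=\sigma^2\lambda/\Sigma$, so $-k^2\Sigma^2/\sigma^2=-\sigma^2\lambda^2=\Sigma'$. Hence $\widehat\Sigma=\Sigma$ on $[0,T)$.

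Next, put $D_t:=\widehat V_t-P^*_t$. Using $k\Sigma/\sigma^2=\lambda$ and $\mathrm dP^*_t=\lambda(t)\,\mathrm dY^*_t$, the $\mathrm dY^*$ terms cancel and we get the linear ODE
\[
\mathrm dD_t=-\lambda(t)k(t)D_t\,\mathrm dt,\qquad D_0=0 .
\]
Its only solution on $[0,T')$ is $D\equiv0$, so $P^*_t=\widehat V_t$. Because $T'<T$ was arbitrary, this gives \eqref{eq:risk_neutral_verified_rational_price} on $[0,T)$.

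\textbf{Step 3: inconspicuousness.} Since $P^*_t$ is $\mathcal F^{Y^*}_t$-measurable,
\[
\mathbb E[\widehat\alpha^*_t\mid\mathcal F^{Y^*}_t]=k(t)\bigl(\mathbb E[V\mid\mathcal F^{Y^*}_t]-P^*_t\bigr)=0 .
\]

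\textbf{Step 4: full revelation (the delicate step).} At $T$ the gain $k$ blows up, so the filter cannot be run up to $T$ directly. We argue as follows.

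First, $e_t=V-P^*_t$ is exactly the pricing error process under the candidate strategy, started at $t=0$ with the random but independent value $e_0=V-\mu$. Lemma~\ref{lem:risk_neutral_pricing_error_moments}, applied conditionally on $V$ (equivalently with $e=V-\mu$, $t=0$), gives $e_t\to0$ almost surely and in $L^2$ as $t\uparrow T$. Alternatively, $\mathbb E[e_t^2]=\mathbb E[\operatorname{Var}(V\mid\mathcal F^{Y^*}_t)]=\Sigma(t)\to0$.

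Second, the process $\xi^*_t=\int_0^t\lambda\,\mathrm dY^*$ has an almost sure limit as $t\uparrow T$. This holds because $\int_0^T|\widehat\alpha^*_s|\,\mathrm ds<\infty$ almost surely by Proposition~\ref{thm:admissibility}, and because $\lambda$ is bounded on $[0,T]$ (it lies between $b^*$ and $\lambda(0)$). So $P^*_T:=H^*(T,\xi^*_T)=\lim_{t\uparrow T}P^*_t$ is well defined by continuity of $H^*$. Combining this with $e_t\to0$ yields $P^*_T=V$ almost surely.

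The main care needed is this passage to $T$, together with confirming that the Kalman--Bucy theorem applies with the observation-dependent drift. Everything else is algebra driven by $k\Sigma=\sigma^2\lambda$.
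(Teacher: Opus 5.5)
Your proposal is correct and follows the paper's proof essentially step by step. The shared steps are: the localized Kalman--Bucy filter on $[0,\tau]$ with $-k(t)P^*_t\,\mathrm{d}t$ treated as an observable drift; Riccati uniqueness to identify the filtering variance with $\Sigma$; the linear equation $\mathrm{d}(\widehat V_t-P^*_t)=-\lambda(t)k(t)(\widehat V_t-P^*_t)\,\mathrm{d}t$ with zero initial value; and full revelation via Lemma~\ref{lem:risk_neutral_pricing_error_moments}.

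Your extra check in Step 4, that $\xi^*_t$ has an almost sure limit, is redundant once $e_t\to0$ almost surely, since then $P^*_t=V-e_t\to V$ directly. It is harmless, though, and it lets your alternative $\mathbb{E}[e_t^2]=\Sigma(t)\to0$ close the argument on its own (via an almost surely convergent subsequence). That route avoids the Dambis--Dubins--Schwarz step.
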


\begin{proof}
Fix $\tau<T$. Since $k$ is continuous on $[0,T)$, it is bounded on $[0,\tau]$, so the standard linear Gaussian filtering equations apply on this interval. Under the candidate strategy, the order-flow dynamics satisfy
$\mathrm{d}Y_t^*
=k(t)(V-P_t^*)\,\mathrm{d}t+\sigma\,\mathrm{d}B_t$.
Let
$\overline{P}_t:=\mathbb{E}[V\mid\mathcal{F}^{Y^*}_t]$ and
$\overline{\Sigma}(t)
:=\operatorname{Var}(V\mid\mathcal{F}^{Y^*}_t)$.
Since $P^*$ is adapted to $\mathbb{F}^{Y^*}$, the term
$-k(t)P_t^*\,\mathrm{d}t$ is an observable drift and does not affect
the filtering covariance. Hence, on $[0,\tau]$,
\[
    \overline{\Sigma}'(t)
    =
    -\frac{k(t)^2}{\sigma^2}
    \overline{\Sigma}(t)^2,
    \qquad
    \overline{\Sigma}(0)=\Sigma_0.
\]
Thus, $\overline{\Sigma}$ and $\Sigma$ solve the same initial-value
problem on $[0,\tau]$. Uniqueness of the Riccati equation yields
$\overline{\Sigma}(t)=\Sigma(t)$ for $0\leq t\leq\tau$.

The filtering equation for the conditional mean is
\[
    \mathrm{d}\overline{P}_t
    =
    \frac{k(t)\overline{\Sigma}(t)}{\sigma^2}
    \left[
        \mathrm{d}Y_t^*
        -
        k(t)
        \bigl(\overline{P}_t-P_t^*\bigr)\,\mathrm{d}t
    \right].
\]
Since $\overline{\Sigma}=\Sigma$ and
$k(t)\Sigma(t)/\sigma^2=\lambda(t)$, comparison with
$\mathrm{d}P_t^*=\lambda(t)\,\mathrm{d}Y_t^*$ yields
\[
    \mathrm{d}
    \bigl(\overline{P}_t-P_t^*\bigr)
    =
    -\lambda(t)k(t)
    \bigl(\overline{P}_t-P_t^*\bigr)\,\mathrm{d}t.
\]
Moreover, $\overline{P}_0-P_0^*=0$. The unique solution of this linear
equation is identically zero on $[0,\tau]$. Since $\tau<T$ was arbitrary, this proves \eqref{eq:risk_neutral_verified_rational_price} for every $0\leq t<T$.

The rational pricing condition further implies $\mathbb{E}[\widehat{\alpha}_t^*\mid\mathcal{F}^{Y^*}_t]
=k(t)\mathbb{E}[V-P_t^*\mid\mathcal{F}^{Y^*}_t]=0$.
Finally, Lemma~\ref{lem:risk_neutral_pricing_error_moments} establishes $V-P_t^*\to0$ almost surely as $t\uparrow T$, and hence
$P_T^*=V$ almost surely.
\end{proof}

Having identified the continuation moments and verified rational pricing, we now establish the local equilibrium condition under bounded constant pasting deviations.

\begin{proposition}
\label{prop:risk_neutral_deviation_verification}
For every $t<T$, every state $(e,w)$, and every constant action $a\in \R$,
\begin{equation}
    \lim_{\varepsilon\downarrow0}
    \frac{
        J(t,e,w;a\otimes_{t,\eps}\widehat{\alpha}^* )
        -
        J(t,e,w;\widehat{\alpha}^*)
    }{\varepsilon}
    =0.
    \label{eq:risk_neutral_verified_deviation_condition}
\end{equation}
Consequently, $\widehat{\alpha}^*$ is a weak equilibrium strategy.
\end{proposition}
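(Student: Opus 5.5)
The plan is to compute $J(t,e,w;a\otimes_{t,\varepsilon}\widehat\alpha^*)$ by conditioning on the state at time $t+\varepsilon$. After $t+\varepsilon$ the deviated strategy coincides with $\widehat\alpha^*$. Set $\widetilde W:=W_T^{a\otimes_{t,\varepsilon}\widehat\alpha^*}$ and $\mathcal G:=\mathcal F_{t+\varepsilon}$. By the tower property and Lemma~\ref{lem:risk_neutral_continuation_moments}, applied from the random state $(t+\varepsilon,e^a_{t+\varepsilon},W^a_{t+\varepsilon})$ (legitimate by the pasting part of Proposition~\ref{thm:admissibility}), we have
\[
\mathbb E[\widetilde W\mid\mathcal G]=g(t+\varepsilon,e^a_{t+\varepsilon},W^a_{t+\varepsilon}),
\qquad
\mathbb E[\widetilde W^2\mid\mathcal G]=Q(t+\varepsilon,e^a_{t+\varepsilon},W^a_{t+\varepsilon}).
\]
Writing $\mathbb E$ for $\mathbb E_{t,e,w}$ and evaluating $g,Q$ at $(t+\varepsilon,e^a_{t+\varepsilon},W^a_{t+\varepsilon})$, it follows that
\[
J(t,e,w;a\otimes_{t,\varepsilon}\widehat\alpha^*)=\mathbb E[g]-\frac{\eta}{2}\Bigl(\mathbb E[Q]-(\mathbb E[g])^2\Bigr).
\]
By the same lemma, the unperturbed value is $J(t,e,w;\widehat\alpha^*)=U(t,e,w)=g-\frac{\eta}{2}(Q-g^2)$ evaluated at $(t,e,w)$.

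Next I would expand both expectations with Itô's formula on $[t,t+\varepsilon]$ under the constant control $a$. The dynamics there are $\mathrm de=-\lambda a\,\mathrm ds-\sigma\lambda\,\mathrm dB$ and $\mathrm dW=ea\,\mathrm ds$, with $\lambda,k$ bounded on $[t,t+\varepsilon]$ and $g,Q$ polynomial in $(e,w)$ with smooth coefficients. The stochastic integrals are true martingales by the moment bounds, as in Proposition~\ref{prop:risk_neutral_verification_uniform_integrability}. This gives
\[
\mathbb E[g]=g+\int_t^{t+\varepsilon}\mathbb E[\mathcal L^a g]\,\mathrm ds,
\qquad
\mathbb E[Q]=Q+\int_t^{t+\varepsilon}\mathbb E[\mathcal L^a Q]\,\mathrm ds .
\]
Dividing by $\varepsilon$ and using continuity of the integrands in $(s,e,w)$ near $(t,e,w)$ together with $L^p$ continuity of the state, the limits are $\mathcal L^a g$ and $\mathcal L^aQ$ at $(t,e,w)$. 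Moreover, $(\mathbb E[g])^2=g^2+2g\,\varepsilon\mathcal L^ag+o(\varepsilon)$.

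Collecting terms, the derivative equals
\[
\mathcal L^a g-\frac{\eta}{2}\bigl(\mathcal L^aQ-2g\mathcal L^ag\bigr)
=\mathcal L^aU-\frac{\eta}{2}\sigma^2\lambda^2g_e^2 .
\]
This uses $\mathcal L^a(g^2)=2g\mathcal L^ag+\sigma^2\lambda^2g_e^2$ and the definition of $Q$, which gives $Q=g^2+\frac2\eta(g-U)$. Since $U=w+u$, we have $\mathcal L^aU=u_t+\frac12\sigma^2\lambda^2u_{ee}+a(e-\lambda u_e)$. The singular constraint \eqref{eq:singular_constraint}, written in the $e$ variable ($u_e=e/\lambda$), kills the $a$-dependence. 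What remains, $u_t+\frac12\sigma^2\lambda^2u_{ee}-\frac\eta2\sigma^2\lambda^2m_e^2$, vanishes by the second equation of \eqref{eq:risk_neutral_reduced_extended_hjb}. This can be checked directly from \eqref{eq:explicit_m_u} and \eqref{eq:risk_neutral_coefficient_equations}. Hence the limit is $0$ for every $a$, which gives \eqref{eq:risk_neutral_verified_deviation_condition} and therefore the weak equilibrium property.

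The main obstacle is the rigorous justification of the first step: the conditional identities for $g$ and $Q$ must be applied from a random initial state, and the expansions must be differentiated at $\varepsilon=0$. For the first part, I would use the explicit Gaussian structure. On $[t+\varepsilon,T)$ the representation \eqref{eq:risk_neutral_pricing_error_representation} holds with initial condition $e^a_{t+\varepsilon}$, so Lemma~\ref{lem:risk_neutral_continuation_moments} applies pathwise by the Markov property. For the second part, I would take bounds uniform in $\varepsilon\le\varepsilon_0$, with $t+\varepsilon_0<T$, on $\mathbb E\sup_{s\le t+\varepsilon}(|e_s|^8+|W_s|^4)$. These bounds justify dominated convergence in $\varepsilon^{-1}\int_t^{t+\varepsilon}\mathbb E[\mathcal L^a\varphi(s,e_s,W_s)]\,\mathrm ds\to\mathcal L^a\varphi(t,e,w)$.
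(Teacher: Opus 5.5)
Your proposal is correct and follows essentially the same route as the paper. You condition at $t+\varepsilon$ to express the deviated first and second moments through $g$ and $Q$, expand both by It\^o's formula under the constant action, and reduce the first variation to $\mathcal{L}^aU-\frac{\eta}{2}\sigma^2\lambda(t)^2g_e^2$, which vanishes by the singular constraint and the reduced extended HJB equation. Your explicit appeal to the Markov property, to apply Lemma~\ref{lem:risk_neutral_continuation_moments} from the random state $(t+\varepsilon,e^a_{t+\varepsilon},W^a_{t+\varepsilon})$, makes precise a step the paper only asserts.
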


\begin{proof}
%Let $a\otimes_{t,\eps}\widehat{\alpha}^* $ coincide with $a$ on $[t,t+\varepsilon)$ and with $\widehat{\alpha}^*$ thereafter, 
Denote the state at $t+\varepsilon$ by
$(e_{t+\varepsilon}^a,W_{t+\varepsilon}^a)$. Proposition~\ref{thm:admissibility} ensures that $a\otimes_{t,\eps}\widehat{\alpha}^* $ is admissible for every sufficiently
small $\varepsilon>0$. Since the candidate strategy is followed after
$t+\varepsilon$, Lemma~\ref{lem:risk_neutral_continuation_moments} yields
\[
    \mathbb{E}_{t,e,w}
    \left[
        W_T^{a\otimes_{t,\eps}\widehat{\alpha}^* }
    \right]
    =
    \mathbb{E}_{t,e,w}
    \left[
        g(t+\varepsilon,e_{t+\varepsilon}^a,W_{t+\varepsilon}^a)
    \right]
\]
and
\[
    \mathbb{E}_{t,e,w}
    \left[
        \bigl(W_T^{a\otimes_{t,\eps}\widehat{\alpha}^* }\bigr)^2
    \right]
    =
    \mathbb{E}_{t,e,w}
    \left[
        Q(t+\varepsilon,e_{t+\varepsilon}^a,W_{t+\varepsilon}^a)
    \right].
\]

Let
$\Delta_g(\varepsilon):=
\mathbb{E}_{t,e,w}
[g(t+\varepsilon,e_{t+\varepsilon}^a,W_{t+\varepsilon}^a)]-g(t,e,w)$
and
$\Delta_Q(\varepsilon):=
\mathbb{E}_{t,e,w}
[Q(t+\varepsilon,e_{t+\varepsilon}^a,W_{t+\varepsilon}^a)]-Q(t,e,w)$.
Since $U=g-\frac{\eta}{2}(Q-g^2)$, direct expansion yields
\[
    J(t,e,w;a\otimes_{t,\eps}\widehat{\alpha}^* )-U(t,e,w)
    =
    \bigl(1+\eta g(t,e,w)\bigr)\Delta_g(\varepsilon)
    -
    \frac{\eta}{2}\Delta_Q(\varepsilon)
    +
    \frac{\eta}{2}\Delta_g(\varepsilon)^2.
\]

Fix $\varepsilon_0>0$ such that $t+\varepsilon_0<T$, and let
$(e_s^a,W_s^a)_{t\leq s\leq t+\varepsilon_0}$ denote the state
process generated by the constant action $a$ on this interval. For
every $0<\varepsilon\leq\varepsilon_0$, this process agrees with
the state under $a\otimes_{t,\eps}\widehat{\alpha}^* $ on
$[t,t+\varepsilon]$. Since $\lambda$ is bounded on
$[t,t+\varepsilon_0]$, the explicit representations
\[
    e_s^a
    =
    e-a\int_t^s\lambda(r)\,\mathrm{d}r
    -\sigma\int_t^s\lambda(r)\,\mathrm{d}B_r,
    \qquad
    W_s^a
    =
    w+a\int_t^s e_r^a\,\mathrm{d}r
\]
show that $e_s^a$ and $W_s^a$ are Gaussian, with moments of every
order uniformly bounded for $s\in[t,t+\varepsilon_0]$.

Since $g$, $Q$, and their derivatives have polynomial growth, these
local Gaussian moment bounds imply that the stochastic integrals
in their It\^o decompositions are square-integrable martingales and
that the corresponding generator families are uniformly
integrable. It\^o's formula on $[t,t+\varepsilon]$ therefore
implies
\[
    \begin{aligned}
        \Delta_g(\varepsilon)
        &=
        \mathbb{E}_{t,e,w}
        \left[
            \int_t^{t+\varepsilon}
            \mathcal{L}^ag(s,e_s^a,W_s^a)\,\mathrm{d}s
        \right],\\
        \Delta_Q(\varepsilon)
        &=
        \mathbb{E}_{t,e,w}
        \left[
            \int_t^{t+\varepsilon}
            \mathcal{L}^aQ(s,e_s^a,W_s^a)\,\mathrm{d}s
        \right].
    \end{aligned}
\]
Since $(e_s^a,W_s^a)\to(e,w)$ almost surely as $s\downarrow t$ and
the generators are continuous,
\[
    \mathcal{L}^ag(s,e_s^a,W_s^a)
    \to 
    \mathcal{L}^ag(t,e,w),
    \qquad
    \mathcal{L}^aQ(s,e_s^a,W_s^a)
    \to 
    \mathcal{L}^aQ(t,e,w)
\]
almost surely. The uniform integrability established above upgrades
these convergences to $L^1$. Taking expectations and averaging over
$[t,t+\varepsilon]$ therefore yields
\[
    \frac{\Delta_g(\varepsilon)}{\varepsilon}
    \to 
    \mathcal{L}^ag(t,e,w),
    \qquad
    \frac{\Delta_Q(\varepsilon)}{\varepsilon}
    \to 
    \mathcal{L}^aQ(t,e,w).
\]
In particular, $\Delta_g(\varepsilon)=O(\varepsilon)$ and hence
$\Delta_g(\varepsilon)^2/\varepsilon\to0$. Using \eqref{eq:risk_neutral_second_moment_Q} and the identity
$\mathcal{L}^a(g^2)-2g\mathcal{L}^ag
=\sigma^2\lambda(t)^2g_e^2$, it follows that
\[
    \lim_{\varepsilon\downarrow0}
    \frac{
        J(t,e,w;a\otimes_{t,\eps}\widehat{\alpha}^* )-U(t,e,w)
    }{\varepsilon}
    =
    \mathcal{L}^aU(t,e,w)
    -
    \frac{\eta}{2}
    \sigma^2\lambda(t)^2g_e(t,e,w)^2.
\]
For the candidate functions, the right-hand side equals
\[
    u_t
    +
    \frac{1}{2}\sigma^2\lambda(t)^2u_{ee}
    -
    \frac{\eta}{2}\sigma^2\lambda(t)^2m_e^2
    +
    a\bigl(e-\lambda(t)u_e\bigr).
\]
Since $u$ and $m$ satisfy \eqref{eq:risk_neutral_reduced_extended_hjb}, this
expression is zero for every bounded constant action $a$.
Lemma~\ref{lem:risk_neutral_continuation_moments} also implies $U(t,e,w)=J(t,e,w;\widehat{\alpha}^*)$, which proves
\eqref{eq:risk_neutral_verified_deviation_condition}. The required weak equilibrium
inequality follows immediately.
\end{proof}

\begin{proof}[Proof of Theorem~\ref{thm:main_equilibrium}]
Lemma~\ref{lem:risk_neutral_continuation_moments} identifies $g$ and $U$ with the
continuation mean and mean--variance value generated by
$\widehat{\alpha}^*$. Proposition~\ref{prop:risk_neutral_deviation_verification}
verifies the informed trader's weak equilibrium condition, while
Lemma~\ref{lem:risk_neutral_rational_candidate_price} establishes rational pricing
and full revelation. Therefore,
$\bigl((H^*,\lambda),\widehat{\alpha}^*\bigr)$ is a weak market
equilibrium.
\end{proof}

The preceding verification shows that the Gaussian--linear forms
conjectured in Subsection~\ref{subsec:gaussian_linear_ansatz} are
internally consistent. In particular, the conjectured pricing rule,
feedback trading strategy, and quadratic representations of the
continuation functions jointly satisfy the informed trader's
intrapersonal equilibrium condition and the market maker's rational
pricing condition. Hence, the Gaussian--linear ansatz indeed yields a
weak market equilibrium.

\subsection{The risk-averse case}
\label{subsec:risk_averse_case}

\subsubsection{Solution to the shooting problem}
\label{subsubsec:risk_averse_shooting_problem}

By \eqref{eq:coupled_system},
$\lambda'(t)=-\sigma^2\lambda(t)^2
(\rho s(t)^2+\eta r(t)^2)\leq0$.
Since $\lambda(T-)=b>0$, it follows that
$\lambda(t)\geq b>0$ for $t<T$. Hence, $\Sigma'(t)=-\sigma^2\lambda(t)^2<0$,
and we may use $x:=\Sigma(t)$ as a backward time variable,
with $x=0$ corresponding to $t=T$ and $x=\Sigma_0$
corresponding to $t=0$. Then for an arbitrary $b\in (0,\infty)$, define $\Lambda_b(x):=\lambda(t)$, $R_b(x):=r(t)$, and $S_b(x):=s(t)$. System \eqref{eq:coupled_system}--\eqref{eq:coupled_system.termial} then becomes
\begin{equation}
\left\{
\begin{aligned}
    \Lambda_b'(x)
    &=
    \rho S_b(x)^2+\eta R_b(x)^2,\\
    R_b'(x)
    &=
    \frac{\bigl(\rho S_b(x)^2+\eta R_b(x)^2\bigr)R_b(x)}
    {\Lambda_b(x)}
    +
    \frac{2\bigl(1-R_b(x)\bigr)}{x},\\
    S_b'(x)
    &=
    \frac{2\bigl(1-S_b(x)\bigr)}{x}
    -
    \frac{\rho S_b(x)^2\bigl(1-S_b(x)\bigr)}
    {\Lambda_b(x)}
    +
    \frac{\eta R_b(x)^2S_b(x)}
    {\Lambda_b(x)},
\end{aligned}
\right.
\label{eq:risk_averse_variance_time_system}
\end{equation}
with initial condition
\begin{equation}
    \Lambda_b(0)=b,\qquad R_b(0)=S_b(0)=1.
    \label{eq:risk_averse_variance_time_initial_conditions}
\end{equation}
Although the last two equations in \eqref{eq:risk_averse_variance_time_system} are singular at $x=0$, the following lemma establishes local well-posedness. Unlike the risk-neutral system, the explicit fixed-$b$ solution from Lemma~\ref{lem:risk_neutral_variance_time_solution} is no longer available, so local and global well-posedness must be established separately.

\begin{lemma}
\label{lem:risk_averse_local_variance_time_solution}
For every $b>0$, there exists $\delta_b>0$ such that \eqref{eq:risk_averse_variance_time_system}--\eqref{eq:risk_averse_variance_time_initial_conditions} admits a unique positive solution satisfying
\begin{equation}
    (\Lambda_b,R_b,S_b)\in C^1([0,\delta_b];(0,\infty)^3).
    \label{eq:risk_averse_local_variance_time_solution_class}
\end{equation}
\end{lemma}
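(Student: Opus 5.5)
We will follow the Volterra/contraction scheme of Lemma~\ref{lem:risk_neutral_variance_time_solution}, now with three unknowns. The first step is to rewrite \eqref{eq:risk_averse_variance_time_system}--\eqref{eq:risk_averse_variance_time_initial_conditions} as an equivalent integral system. Integrating the $\Lambda_b$-equation gives
\[
\Lambda_b(x)=b+\int_0^x\bigl(\rho S_b^2+\eta R_b^2\bigr)\,\mathrm{d}s .
\]
For $R_b$ and $S_b$ we multiply by $x^2$, as in the risk-neutral case. This gives $(x^2R_b)'=2x+x^2F_R$ and $(x^2S_b)'=2x+x^2F_S$, where
\[
F_R=\frac{(\rho S^2+\eta R^2)R}{\Lambda},\qquad F_S=\frac{-\rho S^2(1-S)+\eta R^2S}{\Lambda}.
\]
Integrating, with the boundary term $\varepsilon^2R_b(\varepsilon)\to0$ handled by continuity at $0$, yields
\[
R_b(x)=1+\frac{1}{x^2}\int_0^x s^2F_R\,\mathrm{d}s,\qquad S_b(x)=1+\frac{1}{x^2}\int_0^x s^2F_S\,\mathrm{d}s .
\]
We set $\mathcal V_2(0)=\mathcal V_3(0)=1$. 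Conversely, a continuous fixed point is $C^1$ on $(0,\delta]$ and satisfies the ODE there by differentiation.

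The second step is the contraction. On the complete set
\[
\mathcal K_\delta=\Bigl\{(\Lambda,R,S)\in C([0,\delta])^3:\ \|\Lambda-b\|_\infty\le b/2,\ \|R-1\|_\infty\le 1/2,\ \|S-1\|_\infty\le 1/2\Bigr\},
\]
we have $\Lambda\ge b/2$, so $F_R$, $F_S$ and $\rho S^2+\eta R^2$ are bounded by some $M_b$ and Lipschitz with constant $L_b$. Using $x^{-2}\int_0^x s^2\,\mathrm{d}s=x/3$, we get $\|\mathcal V_i-\text{(initial value)}\|\le C M_b\delta$ and $\|\mathcal V(\cdot)_1-\mathcal V(\cdot)_2\|\le C L_b\delta\|\cdot\|$. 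Choosing $\delta_b$ small makes $\mathcal V$ a self-map and a contraction, so Banach's theorem gives a unique fixed point in $\mathcal K_{\delta_b}$. This fixed point is positive since all components are at least $\min\{b/2,1/2\}$.

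The third step is regularity at $0$. The estimate $|x^{-2}\int_0^x s^2F\,\mathrm{d}s|\le Mx/3$ gives continuity. For $C^1$ up to $0$ we must show that $R_b'$ and $S_b'$ extend continuously. We compute
\[
R_b'(x)=F_R(x)-\frac{2}{x^3}\int_0^x s^2F_R(s)\,\mathrm{d}s .
\]
Since $F_R$ is continuous at $0$, the integral term tends to $\tfrac23F_R(0)$, so $R_b'(x)\to\tfrac13F_R(0)$, with $F_R(0)=(\rho+\eta)/b$. The same computation applies to $S_b$, where $F_S(0)=\eta/b$. The mean value theorem then shows that the one-sided derivatives at $0$ exist and equal these limits.

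The last step is uniqueness in the class \eqref{eq:risk_averse_local_variance_time_solution_class}. Any positive $C^1$ solution with the given initial data satisfies the Volterra system. By continuity at $0$ it lies in $\mathcal K_{\delta_0}$ for some $\delta_0\le\delta_b$, so it agrees with the fixed point on $[0,\delta_0]$. On $[\delta_0,\delta_b]$ the right-hand side is locally Lipschitz, because $x>0$ and $\Lambda>0$, and standard ODE uniqueness extends the agreement. The main obstacle is the $C^1$ extension at $x=0$ and the equivalence of the ODE with the Volterra formulation, which is where the $2(1-R)/x$ singularity must be controlled. Everything else reduces to routine Lipschitz bounds.
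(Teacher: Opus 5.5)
Your proposal is correct and follows essentially the same route as the paper. Both use the Volterra reformulation obtained from $(x^2R_b)'$ and $(x^2S_b)'$, a Banach contraction on the same set $\mathcal{K}_\delta$, and uniqueness by contraction near $x=0$ followed by standard ODE uniqueness for $x>0$. Your formula $R_b'(x)=F_R(x)-2x^{-3}\int_0^x s^2F_R(s)\,\mathrm{d}s\to\frac{1}{3}F_R(0)$, combined with the mean value theorem, is a more explicit version of the paper's last step, where the expansions $R_b(x)=1+\frac{\rho+\eta}{3b}x+o(x)$ and $S_b(x)=1+\frac{\eta}{3b}x+o(x)$ are substituted into the system to extend the derivatives continuously to $x=0$.
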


\begin{proof}
Fix an arbitrary $b\in(0,\infty)$. The second equality in 	\eqref{eq:risk_averse_variance_time_system} gives
$$
R'_b (x) +  \frac{2R_b(x)}{x} =\frac 2 x + \frac{\bigl(\rho S_b(x)^2+\eta R_b(x)^2\bigr)R_b(x)}   {\Lambda_b(x)}.
$$
Then multiplying both sides by $x^2$ gives
$$
(x^2R_b (x))'=2x+ x^2\frac{\bigl(\rho S_b(x)^2+\eta R_b(x)^2\bigr)R_b(x)}   {\Lambda_b(x)},
$$	
which together with $R_b(0)=1$ implies that 
$$
 R_b(x)=1+\frac{1}{x^2}\int_0^x\frac{y^2\bigl(\rho S_b(y)^2+\eta R_b(y)^2\bigr)R_b(y)}{\Lambda_b(y)}\,\mathrm{d}y,
$$
A similar calculation is applied to $S_b$. In sum, system \eqref{eq:risk_averse_variance_time_system}--\eqref{eq:risk_averse_variance_time_initial_conditions} is equivalent to the Volterra system
\begin{equation}
\left\{
\begin{aligned}
    \Lambda_b(x)
    &=
    b+\int_0^x
    \bigl(\rho S_b(y)^2+\eta R_b(y)^2\bigr)\,\mathrm{d}y,\\
    R_b(x)
    &=
    1+\frac{1}{x^2}
    \int_0^x
    \frac{y^2\bigl(\rho S_b(y)^2+\eta R_b(y)^2\bigr)R_b(y)}
    {\Lambda_b(y)}
    \,\mathrm{d}y,\\
    S_b(x)
    &=
    1+\frac{1}{x^2}
    \int_0^x y^2
    \left[
        -\frac{\rho S_b(y)^2\bigl(1-S_b(y)\bigr)}
        {\Lambda_b(y)}
        +
        \frac{\eta R_b(y)^2S_b(y)}
        {\Lambda_b(y)}
    \right]\mathrm{d}y.
\end{aligned}
\right.
\label{eq:risk_averse_volterra_system}
\end{equation}
Given the above integrands being continuous, the integral terms in the last two equalities are {\color{blue}$O(x^3)$} as $x\to 0+$. Their right-hand sides therefore admit continuous extensions to $x=0$, which are defined to be $1$.

For $z=(z_1,z_2,z_3)\in\mathbb{R}^3$, set $|z|_\infty:=\max\{|z_1|,|z_2|,|z_3|\}$. For $z\in C([0,\delta];\mathbb{R}^3)$, let $\|z\|_{L^\infty([0,\delta])}:=\sup_{0\leq x\leq\delta}|z(x)|_\infty$. For $\delta>0$, define
\[
    \mathcal{K}_\delta
    :=
    \left\{
        (\Lambda,R,S)\in C([0,\delta])^3:
        \|\Lambda-b\|_{L^\infty([0,\delta])} \leq\frac{b}{2},\quad
        \|R-1\|_{L^\infty([0,\delta])}\leq\frac{1}{2},\quad
        \|S-1\|_{L^\infty([0,\delta])}\leq\frac{1}{2}
    \right\}.
\]
The set $\mathcal{K}_\delta$ is a nonempty closed subset of the
Banach space $C([0,\delta];\mathbb{R}^3)$ and is therefore complete. Define the Volterra operator $\mathcal{V}=(\mathcal{V}_1,\mathcal{V}_2,\mathcal{V}_3)$ on $\mathcal{K}_\delta$ by
\[
\begin{aligned}
    \mathcal{V}_1(\Lambda,R,S)(x)
    &:=
    b+\int_0^x
    \bigl(\rho S(y)^2+\eta R(y)^2\bigr)\,\mathrm{d}y,\\
    \mathcal{V}_2(\Lambda,R,S)(x)
    &:=
    1+\frac{1}{x^2}\int_0^x
    \frac{y^2\bigl(\rho S(y)^2+\eta R(y)^2\bigr)R(y)}
    {\Lambda(y)}\,\mathrm{d}y,\\
    \mathcal{V}_3(\Lambda,R,S)(x)
    &:=
    1+\frac{1}{x^2}\int_0^x y^2
    \left[
        -\frac{\rho S(y)^2\bigl(1-S(y)\bigr)}{\Lambda(y)}
        +
        \frac{\eta R(y)^2S(y)}{\Lambda(y)}
    \right]\mathrm{d}y,
\end{aligned}
\]
where $\mathcal{V}_2(\Lambda,R,S)(0)=\mathcal{V}_3(\Lambda,R,S)(0):=1$.
For $(\Lambda,R,S)\in\mathcal{K}_\delta$, we have $\Lambda\in[b/2,3b/2]$ and $R,S\in[1/2,3/2]$. It follows that
\[
    \|\mathcal{V}_1(\Lambda,R,S)-b\|_{L^\infty([0,\delta])}
    \leq
    \frac{9(\rho+\eta)\delta}{4},
    \qquad
    \|\mathcal{V}_j(\Lambda,R,S)-1\|_{L^\infty([0,\delta])}
    \leq
    \frac{9(\rho+\eta)\delta}{4b},
    \quad j=2,3.
\]
Thus, $\mathcal{V}$ maps $\mathcal{K}_\delta$ into itself
whenever $0<\delta\leq2b/[9(\rho+\eta)]$.

For $i=1,2,3$, define $F_i: \R^3 \to \R$ by 
\[
\begin{aligned}
    F_1(\Lambda,R,S)
    &:=
    \rho S^2+\eta R^2,\\
    F_2(\Lambda,R,S)
    &:=
    \frac{(\rho S^2+\eta R^2)R}{\Lambda},\\
    F_3(\Lambda,R,S)
    &:=
    -\frac{\rho S^2(1-S)}{\Lambda}
    +
    \frac{\eta R^2S}{\Lambda}.
\end{aligned}
\]
The functions $F_1,F_2,F_3$ are continuously differentiable on the compact set $K_b:=[b/2,3b/2]\times[1/2,3/2]^2$. Hence, there exists $L_b>0$ depending on $b$ such that
\[
    |F_j(z)-F_j(\widetilde z)|\leq L_b|z-\widetilde z|_\infty,
    \quad \forall z,\widetilde z\in K_b,\; j=1,2,3.
\]
Since $\int_0^x y^2\,\mathrm{d}y=x^3/3$, for any $z,\widetilde z\in\mathcal{K}_\delta$ we have
$$
\begin{aligned}
&\|\mathcal{V}_1(z)-\mathcal{V}_1(\widetilde z)\|_{L^\infty([0,\delta])}
    \leq L_b\delta\|z-\widetilde z\|_{L^\infty([0,\delta])},\\
&\|\mathcal{V}_j(z)-\mathcal{V}_j(\widetilde z)\|_{L^\infty([0,\delta])}
    \leq
    \frac{L_b\delta}{3}\|z-\widetilde z\|_{L^\infty([0,\delta])},
    \quad j=2,3.   
\end{aligned}
$$
Therefore,
\[
    \|\mathcal{V}(z)-\mathcal{V}(\widetilde z)\|_{L^\infty([0,\delta])}
    \leq
    L_b\delta\|z-\widetilde z\|_{L^\infty([0,\delta])}.
\]
Choose $\delta_b>0$ such that
\[
    0<\delta_b<
    \min\left\{
        \frac{2b}{9(\rho+\eta)},
        \frac{1}{L_b}
    \right\}.
\]
Applying the preceding estimates with $\delta=\delta_b$, the operator $\mathcal{V}$ maps $\mathcal{K}_{\delta_b}$ into itself and is a contraction on $\mathcal{K}_{\delta_b}$. The Banach fixed-point theorem therefore yields a unique fixed point of $\mathcal{V}$ in $\mathcal{K}_{\delta_b}$. Since every element of $\mathcal{K}_{\delta_b}$ has strictly positive components, this fixed point is a positive continuous solution of \eqref{eq:risk_averse_volterra_system} on $[0,\delta_b]$. Differentiating \eqref{eq:risk_averse_volterra_system} on $(0,\delta_b]$ shows that it satisfies \eqref{eq:risk_averse_variance_time_system} on this interval.

Any other positive solution with the same initial values belongs to
$\mathcal{K}_\delta$ for sufficiently small $\delta>0$ by
continuity. Hence, it coincides with the fixed point near $x=0$ by
the contraction property, and standard ODE uniqueness for $x>0$
extends this equality to $[0,\delta_b]$.

Finally, continuity of the integrands gives
\[
\Lambda_b(x)=b+(\rho+\eta)x+o(x),\;\;
R_b(x)=1+\frac{\rho+\eta}{3b}x+o(x),\;\;
S_b(x)=1+\frac{\eta}{3b}x+o(x),\; \;\text{as $x\downarrow0$.}
\]
 Substituting these expansions into
\eqref{eq:risk_averse_variance_time_system} shows that the
derivatives extend continuously to $x=0$, which proves
\eqref{eq:risk_averse_local_variance_time_solution_class}.
\end{proof}

We next show that the local solution extends uniquely for all $x\geq0$.

\begin{lemma}
\label{lem:risk_averse_global_variance_time_solution}
For every $b>0$, the local solution in \eqref{eq:risk_averse_local_variance_time_solution_class} extends uniquely to a positive solution of \eqref{eq:risk_averse_variance_time_system} on $[0,\infty)$.
\end{lemma}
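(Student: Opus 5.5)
The plan is a continuation argument. Start from the local solution of Lemma~\ref{lem:risk_averse_local_variance_time_solution} on $[0,\delta_b]$ and consider its maximal interval of existence $[0,x_{\max})$ as a positive solution. On $\{x>0,\ \Lambda>0\}$ the right-hand side of \eqref{eq:risk_averse_variance_time_system} is $C^1$, hence locally Lipschitz. Uniqueness therefore comes for free: near $0$ from the contraction argument of Lemma~\ref{lem:risk_averse_local_variance_time_solution}, and away from $0$ from standard ODE uniqueness, exactly as in the proof of Lemma~\ref{lem:risk_neutral_variance_time_solution}.

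Existence on $[0,\infty)$ then reduces to ruling out $x_{\max}<\infty$. By the standard extension theorem, this amounts to showing that on every bounded interval $[0,X]\cap[0,x_{\max})$ the solution stays in a compact subset of $(0,\infty)^3$. I would establish this through the following a priori bounds, in order.

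\textbf{Lower bounds.} Since $\Lambda_b'=\rho S_b^2+\eta R_b^2\ge0$, we have $\Lambda_b\ge b$. If $R_b$ first reached $0$ at some $x_0>0$, the equation would give $R_b'(x_0)=2/x_0>0$, which is impossible. The same argument applies to $S_b$, since all other terms in the $S_b$ equation vanish at $S_b=0$. Hence $R_b,S_b>0$. Moreover, $R_b\ge \min\{1,\cdot\}$-type bounds are not needed, because positivity together with the upper bounds already keeps us in a compact set once $\Lambda_b\ge b$.

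\textbf{Ratio bounds.} On $\{R_b\ge1\}$ the singular term is nonpositive, so
$$(\log R_b)'\le \frac{\Lambda_b'}{\Lambda_b}.$$
Since $R_b/\Lambda_b=1/b$ at $x=0$, this yields $R_b\le \Lambda_b/b$. Similarly, on $\{S_b\ge1\}$,
$$(\log S_b)'\le \frac{\rho S_b(S_b-1)+\eta R_b^2}{\Lambda_b}\le\frac{\Lambda_b'}{\Lambda_b},$$
so $S_b\le\Lambda_b/b$.

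\textbf{Growth bound on $\Lambda_b$ (main obstacle).} It remains to bound $\Lambda_b$ on bounded intervals. The ratio bounds alone only give $\Lambda_b'\le(\rho+\eta)\Lambda_b^2/b^2$, which is a Riccati inequality that could blow up at $x\approx b/(\rho+\eta)$. I expect this to be the main obstacle.

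Guided by the risk-neutral solution of Lemma~\ref{lem:risk_neutral_variance_time_solution}, where $R=(\Lambda/b)^{1/3}$, I would first try to propagate the sharper bounds
$$R_b,\;S_b\le c\,(\Lambda_b/b)^{1/3}.$$
To do so, compute
$$\bigl(\log(R_b\Lambda_b^{-1/3})\bigr)'=\frac{2F}{3\Lambda_b}-\frac{2(R_b-1)}{xR_b},\qquad F=\rho S_b^2+\eta R_b^2,$$
and use the damping term $-2(R_b-1)/(xR_b)$, which is bounded below in size on $[0,X]$ when $R_b$ is large, to beat the growth term at the first time the ratio would exceed a suitable constant $c=c(X)$. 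This is a barrier argument; it is analogous for $S_b$, where the extra $\rho S_b^2(S_b-1)/\Lambda_b$ term is handled the same way.

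Such bounds give $\Lambda_b'\le C\Lambda_b^{2/3}$, hence at most cubic growth of $\Lambda_b$ and no blow-up. If the barrier constant does not close directly, the fallback is the Volterra form \eqref{eq:risk_averse_volterra_system}. Write $R_b-1=x^{-2}\int_0^x y^2\Lambda_b'(y)R_b(y)/\Lambda_b(y)\,\mathrm{d}y$, integrate by parts against $y^2$, and derive a Gronwall inequality for $M(x):=\sup_{[0,x]}\max\{R_b,S_b\}$.

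\textbf{Conclusion.} With $\Lambda_b,R_b,S_b$ bounded above on $[0,X]$, and $\Lambda_b\ge b$, $R_b,S_b>0$, positivity is strict on compacts: a zero of $R_b$ or $S_b$ at a finite $x$ is impossible by the sign argument above. The solution is therefore extendable past $x_{\max}$, a contradiction. Hence $x_{\max}=\infty$.
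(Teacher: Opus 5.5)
Your uniqueness argument and the preliminary bounds ($\Lambda_b\ge b$, $R_b,S_b>0$, $R_b,S_b\le\Lambda_b/b$) are correct. You also correctly identify the crux: ruling out blow-up of $\Lambda_b$ at a finite $x$. But that step is left open, and neither tool you propose can close it.

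Every inequality you actually prove still holds if the singular term $2(1-R_b)/x$ is weakened or even deleted, and then blow-up does occur. For $\rho=0$ without that term, $(\log R_b)'=(\log\Lambda_b)'$ gives $R_b=\Lambda_b/b$ and $\Lambda_b=b/(1-\eta x/b)$. So global existence hinges on the exact coefficient $2$, and the argument must use it sharply.

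Your barrier does not use it sharply. At a first touching point of $R_b=c(\Lambda_b/b)^{1/3}$ (with $S_b$ below the same curve), you only know $F\le(\rho+\eta)c^2(\Lambda_b/b)^{2/3}$ and $R_b\ge c$. Hence the required sign $2F/(3\Lambda_b)<2(R_b-1)/(xR_b)$ follows only if $c-1>(\rho+\eta)c^3x/(3b)$. That forces $x<4b/(9(\rho+\eta))$, which lies inside the range where the crude Riccati bound already works. Taking $c=c(X)$ larger only makes this worse.

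Indeed, for $\rho=0$ one computes $\frac{d}{dx}\log\bigl(R_b\Lambda_b^{-1/3}\bigr)=\frac{2}{3xR_b\Lambda_b}\bigl(\eta xR_b^3-3\Lambda_b(R_b-1)\bigr)$. So the sign your barrier needs is exactly the sign of $\eta xR_b^3-3\Lambda_b(R_b-1)$. This involves $x$ and cannot be read off from the barrier relation between $R_b$ and $\Lambda_b$; controlling it is the whole difficulty. The Volterra/Gronwall fallback fails for the same reason: the integrand is controlled only by $(\rho+\eta)M^3/b$, a cubic nonlinearity.

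The missing idea is a Lyapunov quantity (a conserved one when $\rho=0$). The paper sets $y=e^z$, $\pi=y/\widehat\Lambda$, $\zeta=y\widehat R/\widehat\Lambda$, $\psi=y\widehat S/\widehat\Lambda$. This turns the universal system into the autonomous polynomial system $\pi'=\pi-\rho\psi^2-\eta\zeta^2$, $\zeta'=2\pi-\zeta$, $\psi'=2\pi-\psi-\rho\psi^2$, along the trajectory tending to the origin as $z\to-\infty$. It shows $\pi\le\psi\le\zeta$. It then takes $\mathcal H=\pi^2-\zeta\pi+\frac{\eta}{3}\zeta^3+\frac{\rho}{3}\psi^3+\frac12(\zeta-\psi)^2$, which satisfies $d\mathcal H/dz=-(\zeta-\psi-\rho\psi^2)^2\le0$ with $\mathcal H\to0$ as $z\to-\infty$. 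Hence $\mathcal H\le0$, which prevents $\pi=y/\widehat\Lambda$ from reaching $0$ and bounds $(\pi,\zeta,\psi)$.

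Translated into your variables, $\mathcal H\le0$ gives $\eta R_b^2\le 3\Lambda_b/x$, and $\psi\le\zeta$ gives $S_b\le R_b$. Together these yield $\Lambda_b'\le 3(1+\rho/\eta)\Lambda_b/x$, so $\Lambda_b$ grows at most polynomially. The a priori bound that closes your continuation argument is therefore $R_b^2\lesssim\Lambda_b/x$, not $R_b\lesssim\Lambda_b^{1/3}$. For $\rho=0$, $\mathcal H\equiv 0$ is exactly the identity $\Lambda_b(R_b-1)=\frac{\eta}{3}xR_b^3$ satisfied by the explicit solution of Lemma~\ref{lem:risk_neutral_variance_time_solution}.
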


\begin{proof}
	The scaling $\Lambda_b(x)=b\widehat{\Lambda}(x/b)$, $R_b(x)=\widehat{R}(x/b)$, and $S_b(x)=\widehat{S}(x/b)$ reduces \eqref{eq:risk_averse_variance_time_system} to the universal system \eqref{eq:universal_system}. It therefore suffices to establish global existence for $(\widehat{\Lambda},\widehat{R},\widehat{S})$ on $y\in(0,\infty)$.
	
 To obtain quantities that are easier to control, it is convenient to normalize the variables by $\widehat{\Lambda}$. 
Let $\beta:=1/\widehat{\Lambda}$, $\upsilon:=\widehat{R}/\widehat{\Lambda}$, and $\omega:=\widehat{S}/\widehat{\Lambda}$. A direct calculation yields
	\begin{equation}
		\beta'=-\rho\omega^2-\eta\upsilon^2,\qquad
		\upsilon'=\frac{2(\beta-\upsilon)}{y},\qquad
		\omega'=\frac{2(\beta-\omega)}{y}-\rho\omega^2.
		\label{eq:risk_averse_transformed_universal_system}
	\end{equation}
	To remove the singularity at $y=0$ in \eqref{eq:risk_averse_transformed_universal_system}, let $y=e^z$ and define $\pi(z):=y\beta(y)$, $\zeta(z):=y\upsilon(y)$, and $\psi(z):=y\omega(y)$. It follows from \eqref{eq:risk_averse_transformed_universal_system} that
	\begin{equation}
		\left\{
		\begin{aligned}
			\frac{d\pi}{dz}&=\pi-\rho\psi^2-\eta\zeta^2,\\
			\frac{d\zeta}{dz}&=2\pi-\zeta,\\
			\frac{d \psi}{dz}&=2\pi-\psi-\rho\psi^2.
		\end{aligned}
		\right.
		\label{eq:risk_averse_autonomous_shooting_system}
	\end{equation}
	
	Now we analyze the system \eqref{eq:risk_averse_autonomous_shooting_system}. For $b=1$, Lemma~\ref{lem:risk_averse_local_variance_time_solution} gives the existence of a strictly positive solution $(\widehat{\Lambda},\widehat{R},\widehat{S})$ to the system \eqref{eq:universal_system} on $[0,\delta_1]$, which yields a strictly positive solution to \eqref{eq:risk_averse_autonomous_shooting_system} on $(-\infty,\tilde\delta_1]$, where $\tilde\delta_1:=\log\delta_1$. Let $(-\infty,z_{\max})$, with $z_{\max}\in(\tilde\delta_1,\infty]$, be its maximal interval of existence. Since $\widehat{\Lambda}(y),\widehat{R}(y),\widehat{S}(y)\to1$ as $y\downarrow0$, we have
	\[
	(\pi(z),\zeta(z),\psi(z))\to(0,0,0)
	\qquad\text{as }z\to-\infty.
	\]
	By differentiating $e^z(\psi-\pi)$ and $e^z(\zeta-\psi)$, respectively, and using \eqref{eq:risk_averse_autonomous_shooting_system}, then integrating over $(-\infty,z]$, we obtain
	$$
	\begin{aligned}
		\psi(z)-\pi(z)
		&=\eta\int_{-\infty}^{z}e^{-(z-s)}\zeta(s)^2\,\mathrm{d}s\geq0,\\
		\zeta(z)-\psi(z)
		&=\rho\int_{-\infty}^{z}e^{-(z-s)}\psi(s)^2\,\mathrm{d}s\geq0.
	\end{aligned}
	$$
	Hence,
$
\pi\leq\psi\leq\zeta\text{ on }(-\infty,z_{\max}).
$

The strict positivity of $\pi$ is essential for recovering the original variables from the transformed system. To prove that it is preserved throughout $(-\infty,z_{\max})$, consider
	\[
	\mathcal{H}(\pi,\zeta,\psi)
	=
	\pi^2-\zeta\pi+\frac{\eta}{3}\zeta^3+\frac{\rho}{3}\psi^3
	+\frac{1}{2}(\zeta-\psi)^2.
	\]
	Along solutions of \eqref{eq:risk_averse_autonomous_shooting_system}, we have
	$
	\frac{d\mathcal{H}}{dz}
	=
	-\bigl(\zeta-\psi-\rho\psi^2\bigr)^2
	\leq0$. 
	Since $\mathcal{H}\to0$ as $z\to-\infty$, it follows that $\mathcal{H}\leq0$. Suppose that $\pi$ first reaches zero at a finite time $z_0<z_{\max}$. Since $\pi>0$ on $(-\infty,z_0)$, the second equation in \eqref{eq:risk_averse_autonomous_shooting_system} gives $\zeta(z_0)=2\int_{-\infty}^{z_0}e^{-(z_0-s)}\pi(s)\,\mathrm{d}s>0$.
	Therefore,
	\[
	\mathcal{H}\bigl(0,\zeta(z_0),\psi(z_0)\bigr)
	=
	\frac{\eta}{3}\zeta(z_0)^3
	+
	\frac{\rho}{3}\psi(z_0)^3
	+
	\frac{1}{2}\bigl(\zeta(z_0)-\psi(z_0)\bigr)^2
	>0,
	\]
	which contradicts $\mathcal{H}\leq0$. Thus, $\pi$ remains strictly positive on $(-\infty,z_{\max})$.
	
	Moreover, since $0<\pi\leq\psi\leq\zeta$, $
	\frac{d\pi}{dz}
	=
	\pi-\rho\psi^2-\eta\zeta^2
	\leq
	\pi-(\rho+\eta)\pi^2.
$
	Since $\pi(z)\to0$ as $z\to-\infty$, we conclude that $\pi(z)\leq\frac{1}{\rho+\eta}.$
	Indeed, $\pi$ is below $1/(\rho+\eta)$ for sufficiently negative $z$, and at any point where $\pi=1/(\rho+\eta)$ the preceding inequality gives $d\pi/dz\leq0$, so $\pi$ cannot cross this level from below. Furthermore, we have
$
	\zeta(z)
	=
	2\int_{-\infty}^{z}e^{-(z-s)}\pi(s)\,\mathrm{d}s
	\leq
	\frac{2}{\rho+\eta}.
$
	Therefore,
	\[
	0<\pi(z)\leq\psi(z)\leq\zeta(z)\leq\frac{2}{\rho+\eta},
	\qquad z<z_{\max}.
	\]
	The vector field in \eqref{eq:risk_averse_autonomous_shooting_system} is polynomial and hence locally Lipschitz on $\mathbb R^3$. If $z_{\max}<\infty$, the preceding estimate shows that $(\pi,\zeta,\psi)$ remains bounded on $(-\infty,z_{\max})$. The standard continuation theorem would therefore extend the solution beyond $z_{\max}$, contradicting its maximality. Hence, $z_{\max}=\infty.$
	
	Since $\pi(z)>0$ for every $z\in\mathbb R$, the inverse transformation
	\[
	\widehat{\Lambda}(y)=\frac{y}{\pi(\log y)},\qquad
	\widehat{R}(y)=\frac{\zeta(\log y)}{\pi(\log y)},\qquad
	\widehat{S}(y)=\frac{\psi(\log y)}{\pi(\log y)}
	\]
	is well defined for every $y>0$. On every compact interval $[a,M]\subset(0,\infty)$, continuity and strict positivity of $\pi$ imply that $\pi$ is bounded away from zero, and hence $\widehat{\Lambda}$, $\widehat{R}$, and $\widehat{S}$ remain finite and strictly positive. Together with Lemma~\ref{lem:risk_averse_local_variance_time_solution} near $y=0$, this proves that $(\widehat{\Lambda},\widehat{R},\widehat{S})$ exists uniquely and remains finite and strictly positive on $[0,\infty)$. The scaling then gives the unique global positive solution $(\Lambda_b,R_b,S_b)$ of \eqref{eq:risk_averse_variance_time_system} for every $b>0$.
\end{proof}

The global solution permits us to study the dependence of the constructed time horizon on the terminal value $b$.

\begin{lemma}
\label{lem:risk_averse_shooting_map}
The shooting map $\mathcal{T}$ defined in \eqref{eq:risk_averse_shooting_map} is continuous and strictly decreasing on $(0,\infty)$, and satisfies
\[
  \lim_{b\downarrow0}\mathcal{T}(b)=\infty,\quad
 \lim_{b\uparrow\infty}\mathcal{T}(b)=0.
\]
Moreover, the shooting equation \eqref{eq:risk_averse_shooting_equation} admits a unique solution $b^*>0$.
\end{lemma}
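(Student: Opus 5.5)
The plan is to rewrite $\mathcal{T}$ through a change of variables that isolates the dependence on $b$, mirroring the proof of Lemma~\ref{lem:risk_neutral_unique_terminal_impact}. By Lemma~\ref{lem:risk_averse_global_variance_time_solution}, $\widehat{\Lambda}$ is a finite, strictly positive $C^1$ function on $[0,\infty)$. Moreover, $\widehat{\Lambda}'=\rho\widehat S^2+\eta\widehat R^2>0$, so $\widehat{\Lambda}$ is strictly increasing with $\widehat{\Lambda}\ge\widehat{\Lambda}(0)=1$. Set $F(Y):=\int_0^{Y}\widehat{\Lambda}(y)^{-2}\,\mathrm{d}y$, which is finite, continuous and strictly increasing. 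Then
\[
\mathcal{T}(b)=\frac{1}{\sigma^2 b}F\!\left(\frac{\Sigma_0}{b}\right),
\]
and continuity on $(0,\infty)$ is immediate.

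For strict monotonicity, substitute $z:=\Sigma_0/b$. This gives $\mathcal{T}=\frac{1}{\sigma^2\Sigma_0}\,zF(z)$. Since $z$ is strictly decreasing in $b$, it suffices that $z\mapsto zF(z)$ be strictly increasing. This holds because both factors are positive and $F$ is strictly increasing, or directly from $(zF)'=F(z)+z\widehat{\Lambda}(z)^{-2}>0$. Thus $\mathcal{T}$ is strictly decreasing.

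For the limit $b\uparrow\infty$, note that $\widehat{\Lambda}\ge1$ gives $F(Y)\le Y$. Hence $\mathcal{T}(b)\le\Sigma_0/(\sigma^2b^2)\to0$.

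The limit $b\downarrow0$ is the main obstacle. We need $zF(z)\to\infty$ as $z\to\infty$, i.e., $F$ must not decay faster than $1/z$. Since $F$ is increasing and positive, $F(z)\ge F(1)>0$ for $z\ge1$. Therefore $zF(z)\ge zF(1)\to\infty$, regardless of whether $F(\infty)$ is finite. No growth estimate on $\widehat{\Lambda}$ is needed; in the risk-neutral case, by contrast, $F$ converged to a finite limit, which produced the rate $3/(5\sigma^2\eta b)$. It remains to check that the positivity and finiteness of $\widehat{\Lambda}$ on all of $[0,\infty)$ from Lemma~\ref{lem:risk_averse_global_variance_time_solution} make $F$ well defined for every argument, and that the scaling $\Lambda_b(x)=b\widehat{\Lambda}(x/b)$ identifies $\mathcal{T}(b)$ with the horizon $\int_0^{\Sigma_0}\mathrm{d}x/(\sigma^2\Lambda_b(x)^2)$. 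This identification is a one-line substitution $x=by$.

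Finally, existence and uniqueness of $b^*$ with $\mathcal{T}(b^*)=T$ follow from the intermediate value theorem together with strict monotonicity.
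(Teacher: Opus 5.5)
Your proposal is correct and follows the paper's proof essentially line by line. The paper also writes $\mathcal{T}(b)=\frac{1}{\sigma^2\Sigma_0}\,L\int_0^L\widehat{\Lambda}(y)^{-2}\,\mathrm{d}y$ with $L=\Sigma_0/b$, differentiates in $L$ to get strict monotonicity, and uses $L\int_0^L\widehat{\Lambda}^{-2}\ge L\int_0^1\widehat{\Lambda}^{-2}\to\infty$ for $b\downarrow0$. The only cosmetic difference is the limit $b\uparrow\infty$: the paper uses the small-$L$ asymptotics $F(L)\sim L^2$ coming from $\widehat{\Lambda}(0)=1$, whereas you use the global bound $\widehat{\Lambda}\ge1$ (valid since $\widehat{\Lambda}'\ge0$), and both give the same conclusion.
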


\begin{proof}
Lemma~\ref{lem:risk_averse_global_variance_time_solution} implies $\mathcal{T}$ defined in \eqref{eq:risk_averse_shooting_map} is well-defined.
Set $L:=\Sigma_0/b$ and $F(L):=L\int_0^L\widehat{\Lambda}(y)^{-2}\,\mathrm{d}y$. Then
$
    \mathcal{T}(b)
    =
    \frac{1}{\sigma^2\Sigma_0}F(\Sigma_0/b).
$
Since $\widehat{\Lambda}$ is continuous and strictly positive,
\[
    F'(L)
    =
    \int_0^L\frac{1}{\widehat{\Lambda}(y)^2}\,\mathrm{d}y
    +
    \frac{L}{\widehat{\Lambda}(L)^2}
    >0.
\]
Thus, $F$ is continuous and strictly increasing, and $\mathcal{T}$ is continuous and strictly decreasing.

By $\widehat{\Lambda}(0)=1$, we have $F(L)\sim L^2$ as $L\downarrow0$, and hence $\mathcal{T}(b)\sim \frac{\Sigma_0}{\sigma^2 b^2 }\to0$ as $b\uparrow\infty$. Moreover,
\[
    F(L)
    \geq
    L\int_0^1\frac{1}{\widehat{\Lambda}(y)^2}\,\mathrm{d}y
    \to \infty,\quad \text{as $L\to\infty$.}
\]
Therefore, $\mathcal{T}(b)\to\infty$ as $b\downarrow0$. Then the existence and uniqueness of $b^*$ follows.
\end{proof}

\begin{proof}[Proof of Theorem~\ref{thm:risk_averse_coefficient_system}]
Let $b^*>0$ be the unique solution of \eqref{eq:risk_averse_shooting_equation}, and let $\Sigma$, $\lambda$, $r$, and $s$ be defined by \eqref{eq:risk_averse_variance_representation} and \eqref{eq:risk_averse_coefficient_representation}.
Differentiating \eqref{eq:risk_averse_variance_representation} yields $\Sigma'(t)=-\sigma^2\lambda(t)^2$. Together with \eqref{eq:universal_system}, this shows that $(\Sigma,\lambda,r,s)$ satisfies \eqref{eq:coupled_system}. Moreover,
\[
    \Sigma(0)=\Sigma_0,\qquad
    \Sigma(T)=0,\qquad
    \lambda(T-)=b^*,\qquad
    r(T-)=s(T-)=1.
\]

It remains to prove uniqueness. Let
$(\widetilde{\Sigma},\widetilde{\lambda},
\widetilde r,\widetilde s)$
be another solution of \eqref{eq:coupled_system} satisfying
$\widetilde{\Sigma}(0)=\Sigma_0$,
$\widetilde{\Sigma}(T)=0$,
$\widetilde{\lambda}(T-)\in(0,\infty)$, and
$\widetilde r(T-)=\widetilde s(T-)=1$, and set
$\widetilde b:=\widetilde{\lambda}(T-)$.
The change of variables $x=\widetilde{\Sigma}(t)$ and
Lemmas~\ref{lem:risk_averse_local_variance_time_solution} and
\ref{lem:risk_averse_global_variance_time_solution} imply that the transformed
solution is uniquely determined by $\widetilde b$. Moreover,
\[
    T-t
    =
    \int_0^{\widetilde{\Sigma}(t)}
    \frac{1}{\sigma^2\Lambda_{\widetilde b}(x)^2}
    \,\mathrm{d}x.
\]
The condition $\widetilde{\Sigma}(0)=\Sigma_0$ therefore implies
$\mathcal{T}(\widetilde b)=T$. Lemma~\ref{lem:risk_averse_shooting_map} yields
$\widetilde b=b^*$. The uniqueness of the variance-time solution and
of the preceding time transformation then implies
$(\widetilde{\Sigma},\widetilde{\lambda},\widetilde r,\widetilde s)
=(\Sigma,\lambda,r,s)$.
\end{proof}

\subsubsection{Admissibility}
\label{subsubsec:risk_averse_admissibility}

We next establish admissibility of the risk-averse candidate $\widehat\alpha^*$ associated with the coefficient functions constructed in Subsection \ref{subsubsec:risk_averse_shooting_problem}. Throughout this subsubsection, $\lambda$, $k$, and $\Sigma$ denote the risk-averse candidate coefficients specified in Theorem~\ref{thm:risk_averse_coefficient_system} and \eqref{eq:risk_averse_A_B_k}. The representation and polynomial moment estimates for the pricing error and wealth depend only on the filtering consistency relation, the variance equation, and the nondegenerate terminal behavior of the price impact. They can therefore be inherited from the risk-neutral analysis. The additional issue when $\rho>0$ is the exponential integrability required by the entropic certainty equivalent.

\begin{lemma}
\label{lem:risk_averse_candidate_moments}
For every initial state $(t,e,w)$ with $t<T$, the state equation under
$\widehat{\alpha}_s^*=k(s)e_s$ is well posed on $[t,T)$. With the
risk-averse coefficients substituted for $\lambda$ and $\Sigma$, the
pricing-error representation
\eqref{eq:risk_neutral_pricing_error_representation} and the
second-moment identity
\eqref{eq:risk_neutral_error_second_moment} remain valid.

Moreover, define, for every $p\geq1$ that,
$
\widetilde e_s:=\frac{e_s}{\sqrt{\Sigma(s)}}
$ on $s\in[t, T)$,
then there exists a constant $C_p>0$ such that
\begin{equation}
    \sup_{t\leq s<T}
    \mathbb{E}_{t,e}
    \left[
        |\widetilde e_s|^{2p}
    \right]
    \leq
    C_p
    \left(
        1+\frac{|e|^{2p}}{\Sigma(t)^p}
    \right).
    \label{eq:risk_averse_normalized_error_moments}
\end{equation}
In particular, $e_s\to0$ almost surely and in $L^p$ for every
$p\geq1$ as $s\uparrow T$.

The candidate strategy $\widehat{\alpha}^*$ is locally square-integrable
on $[t,T)$, and satisfies
\begin{equation}
    \int_t^T
    |\widehat{\alpha}_s^*|\,\mathrm{d}s
    <\infty
    \quad
    \mathbb{P}\text{-a.s.},
    \label{eq:risk_averse_proof_absolute_integrability}
\end{equation}
and, for every $p\geq1$,
\begin{equation}
    \mathbb{E}_{t,e,w}
    \left[
        \sup_{t\leq s\leq T}
        |W_s^{\widehat{\alpha}^*}|^p
    \right]
    <\infty.
    \label{eq:risk_averse_proof_wealth_moments}
\end{equation}
Consequently, $W_T^{\widehat{\alpha}^*}\in L^2$.
\end{lemma}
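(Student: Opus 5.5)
The plan is to note that every argument in Lemmas~\ref{lem:risk_neutral_pricing_error_moments} and \ref{lem:risk_neutral_wealth_integrability} used only three structural facts about the coefficients, and then to check these for the risk-averse coefficients of Theorem~\ref{thm:risk_averse_coefficient_system}:
\begin{itemize}
\item[(i)] the filtering consistency $k=\sigma^2\lambda/\Sigma$ from \eqref{eq:risk_averse_A_B_k};
\item[(ii)] the variance equation $\Sigma'=-\sigma^2\lambda^2$ with $\Sigma(T)=0$ and $\Sigma>0$ on $[0,T)$;
\item[(iii)] the bounds $b^*\le\lambda(t)\le\lambda(0)<\infty$ on $[0,T)$.
\end{itemize}
Facts (i) and (ii) are built into the construction via \eqref{eq:risk_averse_variance_representation}. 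For (iii), $\lambda'\le0$ by \eqref{eq:coupled_system}, $\lambda(T-)=b^*$, and $\lambda(0)=b^*\widehat\Lambda(\Sigma_0/b^*)$ is finite by Lemma~\ref{lem:risk_averse_global_variance_time_solution}. Continuity of $\lambda$ and $\Sigma$ on $[0,T)$ also gives that $k$ is continuous, hence bounded, on every $[t,\tau]$ with $\tau<T$.

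\emph{Well-posedness and the error representation.} The state equation $\mathrm{d}e_s=-\lambda k e_s\,\mathrm{d}s-\sigma\lambda\,\mathrm{d}B_s$ is linear with locally bounded coefficients on $[t,T)$, so it has a unique strong solution there. Applying It\^o's formula to $e_s/\Sigma(s)$ and using (i)--(ii) gives $\mathrm{d}(e_s/\Sigma(s))=-\sigma\lambda(s)\Sigma(s)^{-1}\,\mathrm{d}B_s$. This is \eqref{eq:risk_neutral_pricing_error_representation}. Its variance is computed from $\tfrac{\mathrm{d}}{\mathrm{d}r}\Sigma(r)^{-1}=\sigma^2\lambda(r)^2/\Sigma(r)^2$, which yields \eqref{eq:risk_neutral_error_second_moment}. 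It follows that $\widetilde e_s$ is Gaussian with mean $\sqrt{x_s}\,e/\sqrt{\Sigma(t)}$ and variance $1-x_s$, where $x_s\in[0,1]$. Gaussian moment bounds then give \eqref{eq:risk_averse_normalized_error_moments} with $C_p$ depending only on $p$.

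\emph{Convergence $e_s\to0$.} Convergence in $L^p$ follows from $e_s=\sqrt{\Sigma(s)}\,\widetilde e_s$, \eqref{eq:risk_averse_normalized_error_moments}, and $\Sigma(s)\to0$. Almost-sure convergence is proved exactly as in the risk-neutral case: apply Dambis--Dubins--Schwarz to $M_s=\sigma\int_t^s\lambda/\Sigma\,\mathrm{d}B$, whose quadratic variation $1/\Sigma(s)-1/\Sigma(t)$ tends to infinity, and use the strong law $\widetilde B_\ell/\ell\to0$.

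\emph{Integrability of trading and wealth.} Local square-integrability on $[t,T)$ follows because $k$ is bounded on compacts and $e$ has finite moments. For \eqref{eq:risk_averse_proof_absolute_integrability} I use Tonelli:
\[
\mathbb{E}\Big[\int_t^T|k(s)e_s|\,\mathrm{d}s\Big]\le\sigma^2\sup_{s}\mathbb{E}|\widetilde e_s|\int_t^T\frac{\lambda(s)}{\sqrt{\Sigma(s)}}\,\mathrm{d}s .
\]
Changing variables $x=\Sigma(s)$ and using $\lambda\ge b^*$ bounds the last integral by $2\sqrt{\Sigma(t)}/(\sigma^2b^*)$. For wealth, $W_s-w=\sigma^2\int_t^s\lambda(r)\widetilde e_r^2\,\mathrm{d}r\ge0$ is nondecreasing, so $\sup_s|W_s|\le|w|+(W_T-w)$. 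Weighted Jensen with the finite weight $\int_t^T\lambda\le\lambda(0)T$ then gives
\[
\mathbb{E}|W_T-w|^p\le C\Big(\int_t^T\lambda\Big)^p\sup_s\mathbb{E}|\widetilde e_s|^{2p},
\]
which is finite by \eqref{eq:risk_averse_normalized_error_moments}. This proves \eqref{eq:risk_averse_proof_wealth_moments} and $W_T\in L^2$.

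\emph{Main point of care.} The only place needing attention is fact (iii): the risk-averse $\lambda$ is defined only implicitly, so its upper bound cannot be read off a formula. I obtain it from monotonicity together with finiteness of $\widehat\Lambda$ on $[0,\Sigma_0/b^*]$, as above. With (i)--(iii) in hand, everything else is a verbatim transfer of the risk-neutral proofs.
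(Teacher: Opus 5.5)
Your proposal is correct and follows the paper's own argument. The paper likewise notes that the risk-neutral proofs rely only on $k=\sigma^2\lambda/\Sigma$, $\Sigma'=-\sigma^2\lambda^2$, and the finite, strictly positive behavior of $\lambda$ with $\lambda(T-)=b^*$, and then transfers those proofs verbatim. Your explicit derivation of the upper bound $\lambda\le\lambda(0)=b^*\widehat{\Lambda}(\Sigma_0/b^*)$ from monotonicity is a detail the paper leaves implicit when it cites Theorem~\ref{thm:risk_averse_coefficient_system}.
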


\begin{proof}
Under the risk-averse candidate, the structural identities
\[
    k(s)
    =
    \frac{\sigma^2\lambda(s)}{\Sigma(s)},
    \qquad
    \Sigma'(s)
    =
    -\sigma^2\lambda(s)^2
\]
are unchanged. Theorem~\ref{thm:risk_averse_coefficient_system}
further implies that $\lambda$ remains finite and strictly positive
on $[0,T]$, with $\lambda(T-)=b^*>0$. Hence, the proof of
Lemma~\ref{lem:risk_neutral_pricing_error_moments} applies without
modification and yields the pricing-error representation, the
second-moment identity, and
\eqref{eq:risk_averse_normalized_error_moments}. In particular,
$e_s\to0$ almost surely and in $L^p$ for every $p\geq1$ as
$s\uparrow T$.

The proof of Lemma~\ref{lem:risk_neutral_wealth_integrability} also
depends only on the preceding structural identities and the boundedness
of $\lambda$. It therefore remains valid for the risk-averse
coefficients and yields
\eqref{eq:risk_averse_proof_absolute_integrability} and
\eqref{eq:risk_averse_proof_wealth_moments}. Local square-integrability
follows from the continuity of the state process and the boundedness of
$k$ on every compact subinterval of $[t,T)$.
\end{proof}

The next lemma verifies the additional exponential-integrability condition and records a bound that will also be used in the verification of the continuation certainty equivalent.

\begin{lemma}
\label{lem:risk_averse_exponential_integrability}
For every initial state $(t,e,w)$ with $t<T$ and every $\theta>0$,
\begin{equation}
    \mathbb{E}_{t,e,w}
    \left[
        e^{-\theta W_T^{\widehat{\alpha}^*}}
    \right]
    \leq e^{-\theta w}
    <\infty.
    \label{eq:risk_averse_exponential_moment_bound}
\end{equation}
Moreover, along the candidate state process,
\begin{equation}
    0<
    e^{-\rho(W_s^{\widehat{\alpha}^*}+h(s,e_s))}
    \leq e^{-\rho w},
    \qquad t\leq s<T.
    \label{eq:risk_averse_entropic_process_bound}
\end{equation}
\end{lemma}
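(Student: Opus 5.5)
The first bound is essentially a monotonicity statement. As in \eqref{eq:error_est_lm}, along the candidate process
\[
    W_s^{\widehat{\alpha}^*}-w=\int_t^s k(r)e_r^2\,\mathrm{d}r=\sigma^2\int_t^s\lambda(r)\widetilde e_r^2\,\mathrm{d}r\geq0 .
\]
This uses only $k=\sigma^2\lambda/\Sigma>0$, which comes from Theorem~\ref{thm:risk_averse_coefficient_system} and \eqref{eq:risk_averse_A_B_k}. So $W_s^{\widehat{\alpha}^*}$ is nondecreasing on $[t,T)$. By Lemma~\ref{lem:risk_averse_candidate_moments}, $\int_t^T|\widehat\alpha^*_s|\,\mathrm{d}s<\infty$ a.s., and hence $W_T^{\widehat{\alpha}^*}=\lim_{s\uparrow T}W_s^{\widehat{\alpha}^*}\geq w$ a.s. For $\theta>0$ this gives $0<e^{-\theta W_T^{\widehat{\alpha}^*}}\leq e^{-\theta w}$ pointwise. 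Taking expectations yields \eqref{eq:risk_averse_exponential_moment_bound}.

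The second bound reduces to the sign of $h$. Since $W_s^{\widehat{\alpha}^*}\geq w$, it suffices to show $h(s,e)\geq0$ for all $s<T$ and $e\in\mathbb{R}$. Recall $h(s,e)=\Gamma(s)e^2+E(s)$. Here $\Gamma=s/(2\lambda)$ with $s(\cdot)>0$ and $\lambda>0$ by Theorem~\ref{thm:risk_averse_coefficient_system}, so $\Gamma>0$. Also $E(s)=\int_s^T\sigma^2\lambda(u)^2\Gamma(u)\,\mathrm{d}u\geq0$ by \eqref{eq:risk_averse_C_E_D}. Therefore $h\geq0$, and
\[
    W_s^{\widehat{\alpha}^*}+h(s,e_s)\geq w,
\]
which gives \eqref{eq:risk_averse_entropic_process_bound}. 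Strict positivity of the exponential is automatic.

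The main point to check is that $E(s)$ is finite. This holds because $\lambda$ and $\Gamma$ stay bounded on $[0,T]$: $\lambda(T-)=b^*$, $s(T-)=1$, and both are finite on $[0,T]$ by Theorem~\ref{thm:risk_averse_coefficient_system}. One should also check that the monotone limit defining $W_T^{\widehat{\alpha}^*}$ agrees with the terminal wealth in the admissibility definition. It does, since the wealth is an absolutely convergent integral.

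No delicate estimate is expected. The only real content is the pathwise nonnegativity of the running profit $k e^2$ and the positivity of $\Gamma$ and $E$.
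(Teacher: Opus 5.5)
Your proof is correct and follows the paper's argument essentially verbatim. Positivity of $k=\sigma^2\lambda/\Sigma$ makes the running profit $k(r)e_r^2$ pathwise nonnegative, so $W_T^{\widehat{\alpha}^*}\geq w$; and $\Gamma=s/(2\lambda)>0$ together with $E\geq0$ gives $h\geq0$, which yields both bounds (your extra checks on the finiteness of $E$ and the monotone-limit identification of $W_T^{\widehat{\alpha}^*}$ are harmless details the paper leaves implicit).
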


\begin{proof}
Since $k(s)>0$, the wealth process under the candidate satisfies
\[
    W_s^{\widehat{\alpha}^*}-w
    =
    \int_t^s k(r)e_r^2\,\mathrm{d}r
    \geq0.
\]
Thus, $W_T^{\widehat{\alpha}^*}\geq w$, and \eqref{eq:risk_averse_exponential_moment_bound} follows immediately. In addition, \eqref{eq:risk_averse_A_B_k} and \eqref{eq:risk_averse_C_E_D} imply $\Gamma(s)>0$ and $E(s)\geq0$. Hence, $h(s,e_s)=\Gamma(s)e_s^2+E(s)\geq0$, which proves \eqref{eq:risk_averse_entropic_process_bound}.
\end{proof}

The preceding estimates allow us to complete the proof of
Proposition~\ref{prop:risk_averse_admissibility}.

\begin{proof}[Proof of Proposition~\ref{prop:risk_averse_admissibility}]

Lemmas~\ref{lem:risk_averse_candidate_moments} and \ref{lem:risk_averse_exponential_integrability} establish all the admissibility
	requirements for $\widehat{\alpha}^*$. In particular, the candidate
	strategy is locally square-integrable on $[t,T)$, and
	$\int_t^T|\widehat{\alpha}_s^*|\,\mathrm{d}s<\infty$ almost surely,
	its terminal wealth has moments of every polynomial order, and
	\[
	\mathbb E_{t,e,w}\left[e^{-\rho W_T^{\widehat{\alpha}^*}}\right]<\infty.
	\]
	Hence, $\widehat{\alpha}^*$ is admissible.
	
	We only need to verify that admissibility is preserved under the bounded
	constant pasting deviations used in Definition~\ref{def:insider_equilibrium}. Let $a$ be a
	bounded constant and consider
	$a\otimes_{t,\varepsilon}\widehat{\alpha}^*$.
	For sufficiently small $\varepsilon>0$, the coefficients of the state
	equation are regular on $[t,t+\varepsilon]$, and hence
	$(e_{t+\varepsilon}^a,W_{t+\varepsilon}^a)$ has finite moments of every
	order. Conditional on this state, the candidate feedback strategy is
	followed on $[t+\varepsilon,T)$, so Lemma~\ref{lem:risk_averse_candidate_moments} gives the required
	polynomial integrability.

\begin{comment}
\begin{proposition}
\label{prop:risk_averse_deviation_admissibility}
The candidate feedback strategy $\widehat{\alpha}^*$ is admissible. Moreover, if $a$ is a bounded constant and $a\otimes_{t,\eps}\widehat\alpha^*  $ coincides with $a$ on $[t,t+\varepsilon)$ and with $\widehat{\alpha}^*$ thereafter, then $a\otimes_{t,\eps}\widehat\alpha^*  $ is admissible for every sufficiently small $\varepsilon>0$.
\end{proposition}

Lemmas~\ref{lem:risk_averse_candidate_moments} and \ref{lem:risk_averse_exponential_integrability} establish all the admissibility requirements for $\widehat{\alpha}^*$.

On $[t,t+\varepsilon)$, the deviation action is bounded and the coefficients of the state equation are regular. Hence, $(e_{t+\varepsilon}^a,W_{t+\varepsilon}^a)$ has finite moments of every order. Conditional on this state, the strategy agrees with the candidate feedback on $[t+\varepsilon,T)$, so Lemma~\ref{lem:risk_averse_candidate_moments} establishes local square-integrability, absolute integrability of cumulative trading, and all required polynomial wealth moments.

\end{comment}

It remains to verify the exponential moment. Since $a$ is constant and the pricing error is Gaussian on $[t,t+\varepsilon]$, the random variable $W_{t+\varepsilon}^a=w+a\int_t^{t+\varepsilon}e_s^a\,\mathrm{d}s$ is Gaussian and therefore has finite exponential moments of every order. After $t+\varepsilon$, the candidate feedback is followed and its contribution to wealth is nonnegative. Consequently,
\[
    W_T^{a\otimes_{t,\eps}\widehat\alpha^*  }
    \geq
    W_{t+\varepsilon}^a,
    \qquad
    \mathbb{E}_{t,e,w}
    \left[
        e^{-\rho W_T^{a\otimes_{t,\eps}\widehat\alpha^*  }}
    \right]
    \leq
    \mathbb{E}_{t,e,w}
    \left[
        e^{-\rho W_{t+\varepsilon}^a}
    \right]
    <\infty.
\]
Thus, $a\otimes_{t,\eps}\widehat\alpha^*  $ is admissible.
\end{proof}

\begin{comment}
\begin{proof}[Proof of Proposition~\ref{prop:risk_averse_admissibility}]
Lemma~\ref{lem:risk_averse_candidate_moments} establishes well-posedness of the state equation, convergence of the pricing error, absolute integrability of cumulative trading, and the polynomial wealth estimates. Lemma~\ref{lem:risk_averse_exponential_integrability} proves the additional exponential-integrability condition required when $\rho>0$. Therefore, $\widehat{\alpha}^*$ belongs to the admissible class introduced in Section~\ref{sec:model}. Proposition~\ref{prop:risk_averse_deviation_admissibility} further shows that the admissible class is stable under the bounded constant deviation deviations used below.
\end{proof}
\end{comment}

The only additional admissibility requirement created by risk aversion is therefore the exponential moment of terminal wealth. With this condition established, we may proceed to the verification of the entropic continuation value and the weak equilibrium condition.

% 1~remember to change 'h'
\subsubsection{Proof of the verification theorem}
\label{subsubsec:risk_averse_verification}

The verification follows the similar structure as in the risk-neutral case, with one additional step arising from the entropic certainty equivalent. We first identify the exponential continuation function and the conditional moments of terminal wealth, then verify rational pricing, and finally establish the weak equilibrium condition under bounded pasting deviations.

Throughout this subsubsection, $\widehat{\alpha}^*(t,e)=k(t)e$, where $\lambda$ and $k$ are the coefficient functions determined in Theorem~\ref{thm:risk_averse_coefficient_system} and \eqref{eq:risk_averse_A_B_k}. 
%In the calculations below, we write $\lambda$ and $k$ for $\lambda^*$ and $k^*$, respectively. 
Recall the candidate pricing rule is $H^*(t,\xi)=\mu+\xi$. Under the change of variable $e=v-H^*(t,\xi)=v-\mu-\xi$, the generator in \eqref{eq:insider_generator} becomes
\begin{equation}
    \mathcal{L}^a\varphi
    =
    \varphi_t
    -\lambda(t)a\varphi_e
    +ea\varphi_w
    +\frac{1}{2}\sigma^2\lambda(t)^2\varphi_{ee}.
    \label{eq:risk_averse_verification_generator}
\end{equation}

Define
\begin{equation}
    \Phi(t,e,w)
    :=
    e^{-\rho(w+h(t,e))},
    \qquad
    g(t,e,w)
    :=
    w+m(t,e),
    \qquad
    U(t,e,w)
    :=
    w+u(t,e),
    \label{eq:risk_averse_verification_functions}
\end{equation}
where $h$, $m$, and $u$ are defined in \eqref{eq:risk_averse_continuation_functions}, and introduce the second-moment candidate
\begin{equation}
    Q(t,e,w)
    :=
    g(t,e,w)^2
    +
    \frac{2}{\eta}
    \bigl(w+h(t,e)-U(t,e,w)\bigr).
    \label{eq:risk_averse_second_moment_Q}
\end{equation}
For a fixed $t<T$, let $(t_n)_n$ be an arbitrary deterministic sequence satisfying $t\leq t_n<T$ and $t_n\uparrow T$.

The polynomial estimates needed to pass to the terminal time are inherited from the risk-neutral verification. The presence of $h$ in \eqref{eq:risk_averse_second_moment_Q} does not increase the order of growth because $h$ is also quadratic in the pricing error.

\begin{proposition}
\label{prop:risk_averse_verification_uniform_integrability}
The candidate state satisfies $e_{t_n}\to0$ and $W_{t_n}^{\widehat{\alpha}^*}\to W_T^{\widehat{\alpha}^*}$ in $L^p$ for every $p\geq1$. Moreover, $\{g(t_n,e_{t_n},W_{t_n})\}_n$ and $\{Q(t_n,e_{t_n},W_{t_n})\}_n$ are uniformly integrable. For every $n$, the stochastic integrals in the It\^o decompositions of $g(s,e_s,W_s)$ and $Q(s,e_s,W_s)$ on $[t,t_n]$ are square-integrable martingales.
\end{proposition}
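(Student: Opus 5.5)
The plan is to mirror the proof of Proposition~\ref{prop:risk_neutral_verification_uniform_integrability} almost verbatim. The inputs are now Lemma~\ref{lem:risk_averse_candidate_moments} in place of Lemmas~\ref{lem:risk_neutral_pricing_error_moments} and \ref{lem:risk_neutral_wealth_integrability}. The only new ingredient is a uniform bound on the coefficients of $h$.

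\textbf{Step 1: convergence of the state.} Lemma~\ref{lem:risk_averse_candidate_moments} gives $e_{t_n}\to0$ almost surely and in every $L^p$. Since $W_s^{\widehat\alpha^*}-w=\int_t^s k(r)e_r^2\,\mathrm{d}r$ with $k>0$, the wealth is nondecreasing. Hence $W_{t_n}\to W_T$ almost surely, and
\[
|W_{t_n}-W_T|^p\le (W_T-w)^p .
\]
This dominating variable is integrable by \eqref{eq:risk_averse_proof_wealth_moments}, so dominated convergence gives convergence in $L^p$.

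\textbf{Step 2: growth bounds on $g$ and $Q$.} I need the coefficients $A,\Gamma,C,E,D$ and $1/(2\lambda)$ to be bounded on $[t,T)$. By Theorem~\ref{thm:risk_averse_coefficient_system}, $\lambda$, $r$, $s$ are continuous on $[0,T]$, finite and strictly positive, with $\lambda\ge b^*>0$. Therefore $A=r/(2\lambda)$ and $\Gamma=s/(2\lambda)$ are bounded, since $r$ and $s$ are continuous up to $T$ with limits $1$. The functions $C$, $E$, $D$ in \eqref{eq:risk_averse_C_E_D} are integrals of bounded functions, hence bounded.

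It follows that $|g|+|U|+|w+h|\le C_1(1+|w|+e^2)$. From \eqref{eq:risk_averse_second_moment_Q} we get
\[
|Q|\le C_2(1+w^2+e^4), \qquad |Q|^2\le C_3(1+w^4+e^8).
\]
Write $e_{t_n}=\sqrt{\Sigma(t_n)}\,\widetilde e_{t_n}$ with $\Sigma(t_n)\le\Sigma(t)$. Then \eqref{eq:risk_averse_normalized_error_moments} with $p=4$ gives $\sup_n\mathbb E[e_{t_n}^8]<\infty$. Also \eqref{eq:risk_averse_proof_wealth_moments} gives $\sup_n\mathbb E[W_{t_n}^4]<\infty$. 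So $\{Q(t_n,e_{t_n},W_{t_n})\}_n$ is bounded in $L^2$, hence uniformly integrable. The same argument with lower exponents handles $\{g(t_n,e_{t_n},W_{t_n})\}_n$.

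\textbf{Step 3: martingale property of the stochastic integrals.} On $[t,t_n]$ the diffusion coefficient is $-\sigma\lambda(s)$ times the $e$-derivative, and $\lambda$ is bounded. We have $g_e=m_e=-2Ae$, which grows linearly. For $Q$,
\[
Q_e=2g\,m_e+\tfrac{2}{\eta}(h_e-u_e),
\]
so $|Q_e|\le C(1+|w||e|+|e|^3)$.

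Hölder's inequality with the uniform moment bounds of Step~2, which hold for all $s\in[t,T)$, then gives
\[
\mathbb E\int_t^{t_n}\lambda(s)^2\bigl(g_e^2+Q_e^2\bigr)(s,e_s,W_s)\,\mathrm{d}s<\infty .
\]
Hence both stochastic integrals are square-integrable martingales.

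The main obstacle is Step~2: checking that the coefficient functions stay bounded up to $T$ despite the singular $k$. Theorem~\ref{thm:risk_averse_coefficient_system} settles this through the terminal limits $\lambda(T-)=b^*$ and $r(T-)=s(T-)=1$. The positivity of $\Gamma$ and $E$ from Lemma~\ref{lem:risk_averse_exponential_integrability} is not needed here; only boundedness matters.
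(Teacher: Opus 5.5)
Your proposal is correct and follows the paper's proof essentially step for step. The shared steps are: monotone wealth plus dominated convergence for the $L^p$ limits; growth bounds $|g|\le C(1+|w|+e^2)$ and $|Q|\le C(1+w^2+e^4)$ from bounded coefficients of $h$, $m$, $u$ on $[t,T)$, combined with Lemma~\ref{lem:risk_averse_candidate_moments} to get $L^2$-boundedness; and $|Q_e|\le C(1+|w||e|+|e|^3)$ for the martingale property. Your justification of the coefficient bounds via $\lambda\ge b^*$ and $r(T-)=s(T-)=1$ spells out what the paper only asserts. One cosmetic slip: in the $e$-variable $m_e=2Ae$ (it is $m_\xi$ that equals $-2Ae$), which changes no bound since only absolute values enter.
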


\begin{proof}
Lemma~\ref{lem:risk_averse_candidate_moments} establishes $e_{t_n}\to0$ in every $L^p$ and almost surely. Since $W_s^{\widehat{\alpha}^*}-w=\int_t^s k(r)e_r^2\,\mathrm{d}r$ is nondecreasing, \eqref{eq:risk_averse_proof_wealth_moments} and dominated convergence imply $W_{t_n}^{\widehat{\alpha}^*}\to W_T^{\widehat{\alpha}^*}$ in $L^p$ for every $p\geq1$.

The coefficients of $h$, $m$, and $u$ remain bounded on $[t,T]$. Consequently, there exist constants $C_1,C_2>0$, independent of $s\in[t,T)$, such that
\[
    |w+h(s,e)|+|g(s,e,w)|+|U(s,e,w)|
    \leq
    C_1(1+|w|+|e|^2),
    \qquad
    |Q(s,e,w)|
    \leq
    C_2(1+|w|^2+|e|^4).
\] % 3~check the expression C_n
The moment estimates in Lemma~\ref{lem:risk_averse_candidate_moments} therefore imply that $\{g(t_n,e_{t_n},W_{t_n})\}_n$ and $\{Q(t_n,e_{t_n},W_{t_n})\}_n$ are bounded in $L^2$ and hence uniformly integrable. Finally, $g_e$ grows linearly in $e$, while the explicit form of $Q$ implies $|Q_e(s,e,w)|\leq C_n(1+|w||e|+|e|^3)$ on $[t,t_n]$. The same argument used in Proposition~\ref{prop:risk_neutral_verification_uniform_integrability}, together with \eqref{eq:risk_averse_proof_wealth_moments} and the normalized pricing-error estimates in \eqref{eq:risk_averse_normalized_error_moments}, shows that
\[
    \mathbb{E}_{t,e,w}
    \left[
        \int_t^{t_n}
        \sigma^2\lambda(s)^2
        \bigl(g_e(s,e_s,W_s)^2+Q_e(s,e_s,W_s)^2\bigr)
        \,\mathrm{d}s
    \right]
    <\infty.
\]
Thus, the corresponding stochastic integrals are square-integrable martingales.
\end{proof}

We first identify the exponential continuation function. This is the additional step required by risk aversion.
%4~减少换行
\begin{lemma}
\label{lem:risk_averse_entropic_continuation}
The function $h$ defined in \eqref{eq:risk_averse_continuation_functions} satisfies
\begin{equation}
    w+h(t,e)
    =
    -\frac{1}{\rho}
    \log
    \mathbb{E}_{t,e,w}
    \left[
        e^{-\rho W_T^{\widehat{\alpha}^*}}
    \right].
    \label{eq:risk_averse_verification_h}
\end{equation}
\end{lemma}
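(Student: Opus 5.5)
The plan is to show that $\Phi(s,e_s,W_s)=e^{-\rho(W_s+h(s,e_s))}$, evaluated along the candidate state process, is a bounded martingale on $[t,T)$ whose terminal limit is $e^{-\rho W_T^{\widehat{\alpha}^*}}$. Taking $\log$ at time $t$ then gives \eqref{eq:risk_averse_verification_h}.

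\textbf{Step 1 (zero drift).} First check that $\mathcal{L}^{\widehat{\alpha}^*}\Phi=0$, with $\mathcal{L}^a$ the generator \eqref{eq:risk_averse_verification_generator} and $a=k(t)e$. A direct computation, dividing by $-\rho\Phi$, turns this into
\[
h_t+\tfrac12\sigma^2\lambda^2h_{ee}-\tfrac{\rho}{2}\sigma^2\lambda^2h_e^2+k(t)e\bigl(e-\lambda h_e\bigr)=0 .
\]
Substitute $h=\Gamma e^2+E$. The $e^2$ coefficient gives $\Gamma'+k(1-2\lambda\Gamma)-2\rho\sigma^2\lambda^2\Gamma^2=0$, which is \eqref{eq:B_E_equations}. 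With $s=2\lambda\Gamma$ this is exactly the $s$-equation in \eqref{eq:coupled_system}, after using $\lambda k=-\Sigma'/\Sigma$ and the $\lambda$-equation. The constant term gives $E'+\sigma^2\lambda^2\Gamma=0$, which holds by \eqref{eq:risk_averse_C_E_D}. So the drift vanishes identically on $[t,T)$.

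\textbf{Step 2 (true martingale).} By It\^o's formula, $\mathrm{d}\Phi(s,e_s,W_s)=\rho\sigma\lambda(s)\Phi\,h_e\,\mathrm{d}B_s$ on $[t,t_n]$. Lemma~\ref{lem:risk_averse_exponential_integrability} gives $0<\Phi\le e^{-\rho w}$. Also $|h_e|=2\Gamma|e|$ with $\Gamma$ bounded, and $\lambda$ is bounded. Together with the moment bounds of Lemma~\ref{lem:risk_averse_candidate_moments}, this makes the integrand square integrable on $[t,t_n]$. Hence
\[
\Phi(t,e,w)=\mathbb{E}_{t,e,w}\bigl[\Phi(t_n,e_{t_n},W_{t_n})\bigr].
\]

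\textbf{Step 3 (terminal limit).} This is the main point requiring care. Almost surely, $e_{t_n}\to0$ (Lemma~\ref{lem:risk_averse_candidate_moments}) and $W_{t_n}\to W_T$ by monotonicity. Since $\Gamma$ is bounded and $E(t_n)\to E(T)=0$, we get $h(t_n,e_{t_n})\to0$. Hence $\Phi(t_n,e_{t_n},W_{t_n})\to e^{-\rho W_T^{\widehat{\alpha}^*}}$ almost surely. The uniform bound $e^{-\rho w}$ from \eqref{eq:risk_averse_entropic_process_bound} allows bounded convergence, so
\[
e^{-\rho(w+h(t,e))}=\mathbb{E}_{t,e,w}\bigl[e^{-\rho W_T^{\widehat{\alpha}^*}}\bigr],
\]
and taking $-\frac1\rho\log$ yields the claim. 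The only delicate points are that $\Gamma(T-)=s(T-)/(2\lambda(T-))=1/(2b^*)$ is finite and that $E$ is continuous at $T$. Both follow from Theorem~\ref{thm:risk_averse_coefficient_system}, because $\lambda$ and $s$ are bounded and positive.
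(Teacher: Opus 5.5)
Your proposal is correct and follows the paper's proof essentially step by step. The steps are: zero drift of $\Phi$ under the candidate generator; the martingale identity $\Phi(t,e,w)=\mathbb{E}_{t,e,w}[\Phi(t_n,e_{t_n},W_{t_n})]$, justified by the bound $0<\Phi\le e^{-\rho w}$; and bounded convergence using $e_{t_n}\to0$, $W_{t_n}\to W_T^{\widehat{\alpha}^*}$, and $h(t_n,e_{t_n})\to0$. The only difference is that your Step 1 checks the $e^2$ and constant coefficients explicitly, via the $s$-equation in \eqref{eq:coupled_system} and $E'=-\sigma^2\lambda^2\Gamma$, whereas the paper compresses this into a reference to \eqref{eq:auxiliary_h}.
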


\begin{proof}
The auxiliary equation \eqref{eq:auxiliary_h} implies $\mathcal{L}^{\widehat{\alpha}^*}\Phi=0$. Hence, It\^o's formula shows that $\Phi(s,e_s,W_s)$ is a local martingale. The bound \eqref{eq:risk_averse_entropic_process_bound} makes it a bounded martingale, so
\[
    \Phi(t,e,w)
    =
    \mathbb{E}_{t,e,w}
    \left[
        \Phi(t_n,e_{t_n},W_{t_n})
    \right].
\]
By Lemma~\ref{lem:risk_averse_candidate_moments}, $e_{t_n}\to0$ and $W_{t_n}\to W_T^{\widehat{\alpha}^*}$ almost surely. As $s\uparrow T$, Since $\Gamma(s)$ remains bounded and $E(s)\to0$, we also have $h(s,e_s)\to0$ almost surely. Therefore, $\Phi(t_n,e_{t_n},W_{t_n})\to e^{-\rho W_T^{\widehat{\alpha}^*}}$ almost surely. The bound \eqref{eq:risk_averse_entropic_process_bound} permits passage to the limit and yields
\[
    e^{-\rho(w+h(t,e))}
    =
    \mathbb{E}_{t,e,w}
    \left[
        e^{-\rho W_T^{\widehat{\alpha}^*}}
    \right].
\]
Taking logarithms proves \eqref{eq:risk_averse_verification_h}.
\end{proof}

The preceding proposition also permits the same terminal-time argument as in the risk-neutral case for the first and second moments of terminal wealth.

\begin{lemma}
\label{lem:risk_averse_continuation_moments}
The functions $g$ and $U$ defined in
\eqref{eq:risk_averse_verification_functions} satisfy
\begin{equation}
    g(t,e,w)
    =
    \mathbb{E}_{t,e,w}
    \left[
        W_T^{\widehat{\alpha}^*}
    \right]
    \label{eq:risk_averse_verification_g}
\end{equation}
and
\begin{equation}
    U(t,e,w)
    =
    -\frac{1}{\rho}
    \log
    \mathbb{E}_{t,e,w}
    \left[
        e^{-\rho W_T^{\widehat{\alpha}^*}}
    \right]
    -
    \frac{\eta}{2}
    \operatorname{Var}_{t,e,w}
    \left(
        W_T^{\widehat{\alpha}^*}
    \right).
    \label{eq:risk_averse_verification_U}
\end{equation}
\end{lemma}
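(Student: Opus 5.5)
The proof follows the risk-neutral argument of Lemma~\ref{lem:risk_neutral_continuation_moments}, with Lemma~\ref{lem:risk_averse_entropic_continuation} supplying the new entropic term. First I identify $g$ with the conditional mean. From \eqref{eq:auxiliary_m} (i.e.\ \eqref{eq:A_C_equations}) we get $\mathcal{L}^{\widehat{\alpha}^*}g=0$. Hence It\^o's formula gives $\mathrm{d}g(s,e_s,W_s)=-\sigma\lambda(s)g_e\,\mathrm{d}B_s$. By Proposition~\ref{prop:risk_averse_verification_uniform_integrability} this is a true martingale on $[t,t_n]$, so $g(t,e,w)=\mathbb{E}_{t,e,w}[g(t_n,e_{t_n},W_{t_n})]$. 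To pass $n\to\infty$, note that $A$ is bounded near $T$ (because $r$ and $\lambda$ have positive finite limits) and $C(t_n)\to0$ by \eqref{eq:risk_averse_C_E_D}. Together with $e_{t_n}\to0$ and $W_{t_n}\to W_T^{\widehat{\alpha}^*}$ in $L^p$, this gives $g(t_n,\cdot)\to W_T^{\widehat{\alpha}^*}$ in $L^1$, which proves \eqref{eq:risk_averse_verification_g}.

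Next I show $\mathcal{L}^{\widehat{\alpha}^*}Q=0$ for $Q$ defined in \eqref{eq:risk_averse_second_moment_Q}. Write $\widetilde h:=w+h$. The quadratic terms match coefficients: \eqref{eq:B_E_equations} gives $\mathcal{L}^{\widehat{\alpha}^*}\widetilde h=\frac{\rho}{2}\sigma^2\lambda^2h_e^2$. The second equation of \eqref{eq:reduced_extended_hjb} together with the singular constraint \eqref{eq:singular_constraint} gives $\mathcal{L}^{\widehat{\alpha}^*}U=\frac{\rho}{2}\sigma^2\lambda^2h_e^2+\frac{\eta}{2}\sigma^2\lambda^2m_e^2$. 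Finally, $\mathcal{L}^{\widehat{\alpha}^*}(g^2)=\sigma^2\lambda^2g_e^2$, since $\mathcal{L}^{\widehat{\alpha}^*}g=0$. Therefore
\[
\mathcal{L}^{\widehat{\alpha}^*}Q=\sigma^2\lambda^2m_e^2+\frac{2}{\eta}\Bigl(\frac{\rho}{2}\sigma^2\lambda^2h_e^2-\frac{\rho}{2}\sigma^2\lambda^2h_e^2-\frac{\eta}{2}\sigma^2\lambda^2m_e^2\Bigr)=0 .
\]
I will double-check this algebra against \eqref{eq:lambda_D_equations}, because the $\lambda'$ equation encodes exactly this identity.

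With $\mathcal{L}^{\widehat{\alpha}^*}Q=0$ and the martingale property from Proposition~\ref{prop:risk_averse_verification_uniform_integrability}, we get $Q(t,e,w)=\mathbb{E}_{t,e,w}[Q(t_n,e_{t_n},W_{t_n})]$. Along the path, $h(t_n,e_{t_n})$, $m(t_n,e_{t_n})$ and $u(t_n,e_{t_n})$ all tend to $0$ in probability. This holds because $\Gamma$, $A$ and $1/\lambda$ stay bounded, $E,C,D\to0$, and $e_{t_n}\to0$. Hence $Q(t_n,\cdot)\to (W_T^{\widehat{\alpha}^*})^2$ in probability. Uniform integrability from the same proposition then upgrades this to $Q(t,e,w)=\mathbb{E}_{t,e,w}[(W_T^{\widehat{\alpha}^*})^2]$.

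To conclude, solve \eqref{eq:risk_averse_second_moment_Q} for $U$: $U=w+h-\frac{\eta}{2}(Q-g^2)$. Substituting Lemma~\ref{lem:risk_averse_entropic_continuation} for $w+h$, together with the first- and second-moment identities just obtained, yields \eqref{eq:risk_averse_verification_U}.

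The main obstacle is the second step: confirming that the definition of $Q$ exactly cancels both the $h_e^2$ and the $m_e^2$ correction terms in the generator of $U$. Everything else is inherited from the risk-neutral proof.
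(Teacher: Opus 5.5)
Your proposal is correct and follows essentially the same route as the paper. You identify $g$ via $\mathcal{L}^{\widehat{\alpha}^*}g=0$ and the martingale property, obtain $\mathcal{L}^{\widehat{\alpha}^*}Q=0$ from the generator identities for $w+h$, $U$ and $g^2$, pass to the terminal time using the uniform integrability proposition, and combine with Lemma~\ref{lem:risk_averse_entropic_continuation}; your algebra checks out, and your justification of the terminal limits (boundedness of $A$, $\Gamma$, $1/\lambda$ and $C,E,D\to0$) is slightly more explicit than the paper's.
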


\begin{proof}
Under the candidate strategy, the auxiliary equation
\eqref{eq:auxiliary_m} implies
$\mathcal{L}^{\widehat{\alpha}^*}g=0$. It\^o's formula and
Proposition~\ref{prop:risk_averse_verification_uniform_integrability}
yield
\[
    g(t,e,w)
    =
    \mathbb{E}_{t,e,w}
    \left[
        g(t_n,e_{t_n},W_{t_n})
    \right].
\]
Since $m(t_n,e_{t_n})\to0$ in $L^1$ and
$W_{t_n}\to W_T^{\widehat{\alpha}^*}$ in $L^1$, passage to the limit
proves \eqref{eq:risk_averse_verification_g}.

For the second moment, \eqref{eq:auxiliary_h},
\eqref{eq:auxiliary_m}, and \eqref{eq:reduced_extended_hjb} imply
\[
    \mathcal{L}^{\widehat{\alpha}^*}(w+h)
    =
    \frac{\rho}{2}\sigma^2\lambda(t)^2h_e^2,
    \qquad
    \mathcal{L}^{\widehat{\alpha}^*}U
    =
    \frac{\rho}{2}\sigma^2\lambda(t)^2h_e^2
    +
    \frac{\eta}{2}\sigma^2\lambda(t)^2m_e^2.
\]
Moreover,
$\mathcal{L}^{\widehat{\alpha}^*}(g^2)
=\sigma^2\lambda(t)^2m_e^2$. It follows from
\eqref{eq:risk_averse_second_moment_Q} that
$\mathcal{L}^{\widehat{\alpha}^*}Q=0$. Thus, It\^o's formula and
Proposition~\ref{prop:risk_averse_verification_uniform_integrability}
imply
\[
    Q(t,e,w)
    =
    \mathbb{E}_{t,e,w}
    \left[
        Q(t_n,e_{t_n},W_{t_n})
    \right].
\]

The terminal conditions for $h$, $m$, and $u$, together with
$e_{t_n}\to0$ and $W_{t_n}\to W_T^{\widehat{\alpha}^*}$, imply
$Q(t_n,e_{t_n},W_{t_n})
\to(W_T^{\widehat{\alpha}^*})^2$ in probability. Uniform integrability
then proves
\[
    Q(t,e,w)
    =
    \mathbb{E}_{t,e,w}
    \left[
        \bigl(W_T^{\widehat{\alpha}^*}\bigr)^2
    \right].
\]
Finally, substituting this identity together with
\eqref{eq:risk_averse_verification_h} and
\eqref{eq:risk_averse_verification_g} into
\eqref{eq:risk_averse_second_moment_Q} proves
\eqref{eq:risk_averse_verification_U}.
\end{proof}

We next verify rational pricing. The filtering argument is unchanged because it depends only on the linear feedback form and the filtering consistency relation.

\begin{lemma}
\label{lem:risk_averse_rational_candidate_price}
Let $P_t^*=H^*(t,\xi_t^*)=\mu+\xi_t^*$, where $\xi_t^*=\int_0^t\lambda(s)\,\mathrm{d}Y_s^*$ and $\mathrm{d}Y_t^*=\widehat{\alpha}_t^*\,\mathrm{d}t+\sigma\,\mathrm{d}B_t$. Then
\begin{equation}
    P_t^*
    =
    \mathbb{E}
    \left[
        V\mid\mathcal{F}_t^{Y^*}
    \right],
    \qquad
    \operatorname{Var}
    \left(
        V\mid\mathcal{F}_t^{Y^*}
    \right)
    =
    \Sigma(t),
    \qquad
    0\leq t<T.
    \label{eq:risk_averse_verified_rational_price}
\end{equation}
Moreover, $\mathbb{E}[\widehat{\alpha}_t^*\mid\mathcal{F}_t^{Y^*}]=0$ and $P_T^*=V$ almost surely.
\end{lemma}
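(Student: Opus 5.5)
The plan is to repeat the argument of Lemma~\ref{lem:risk_neutral_rational_candidate_price} essentially verbatim. The only structural inputs are the linear feedback form $\widehat{\alpha}^*_t=k(t)(V-P_t^*)$, the filtering consistency relation $k=\sigma^2\lambda/\Sigma$, and the variance equation $\Sigma'=-\sigma^2\lambda^2$. All three hold for the risk-averse coefficients of Theorem~\ref{thm:risk_averse_coefficient_system} together with \eqref{eq:risk_averse_A_B_k}. Differentiating \eqref{eq:risk_averse_variance_representation} gives $\Sigma'=-\sigma^2\lambda^2$. Combining this with $k=\sigma^2\lambda/\Sigma$ shows that $\Sigma$ solves the Riccati equation $\Sigma'=-k^2\Sigma^2/\sigma^2$ with $\Sigma(0)=\Sigma_0$.

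\textbf{Step 1: filtering on $[0,\tau]$.} Fix $\tau<T$. Since $\lambda$ is continuous and $\Sigma\ge\Sigma(\tau)>0$ on $[0,\tau]$, the gain $k$ is bounded there. Along the candidate, the observation equation is $\mathrm{d}Y^*_t=k(t)V\,\mathrm{d}t-k(t)P^*_t\,\mathrm{d}t+\sigma\,\mathrm{d}B_t$. Here $-kP^*$ is an $\mathbb{F}^{Y^*}$-adapted drift, and $P^*$ is a linear functional of the past of $Y^*$. The standard Kalman--Bucy theorem (Liptser--Shiryaev, conditionally Gaussian case) then applies and yields the following:
\begin{itemize}
\item $\overline P_t:=\mathbb{E}[V\mid\mathcal F^{Y^*}_t]$ satisfies $\mathrm{d}\overline P_t=(k\overline\Sigma/\sigma^2)\bigl[\mathrm{d}Y^*_t-k(\overline P_t-P^*_t)\,\mathrm{d}t\bigr]$;
\item the deterministic conditional variance $\overline\Sigma$ solves the same Riccati initial value problem as $\Sigma$.
\end{itemize}
Uniqueness for this locally Lipschitz ODE gives $\overline\Sigma=\Sigma$ on $[0,\tau]$.

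\textbf{Step 2: identify the price.} Using $k\Sigma/\sigma^2=\lambda$ and $\mathrm{d}P^*_t=\lambda\,\mathrm{d}Y^*_t$, the difference $D_t:=\overline P_t-P^*_t$ solves $\mathrm{d}D_t=-\lambda k D_t\,\mathrm{d}t$ with $D_0=\mu-\mu=0$. Hence $D\equiv0$ on $[0,\tau]$. Since $\tau<T$ is arbitrary, this proves \eqref{eq:risk_averse_verified_rational_price}. Inconspicuousness then follows from $\mathbb{E}[\widehat\alpha^*_t\mid\mathcal F^{Y^*}_t]=k(t)(\overline P_t-P^*_t)=0$.

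\textbf{Step 3: full revelation.} For $P_T^*=V$, invoke Lemma~\ref{lem:risk_averse_candidate_moments} with initial state $(0,V-\mu)$. It gives $e_t=V-P^*_t\to0$ almost surely as $t\uparrow T$, so $P_T^*:=\lim_{t\uparrow T}P_t^*=V$. One detail needs a remark: that lemma is stated for a deterministic initial error $e$, whereas here $e_0=V-\mu$ is random but independent of $B$. Conditioning on $V$ and applying the lemma for each fixed value gives the almost-sure convergence unconditionally.

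The main obstacle is the singularity of $k$ at $T$, which prevents applying Kalman--Bucy on all of $[0,T]$. The restriction to $[0,\tau]$ for every $\tau<T$, followed by passage to the limit via the almost-sure convergence of the pricing error, is precisely what handles this, as in the risk-neutral case.
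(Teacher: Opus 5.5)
Your proposal is correct and matches the paper's proof. The paper notes that the risk-averse coefficients satisfy $k\Sigma/\sigma^2=\lambda$ and $\Sigma'=-k^2\Sigma^2/\sigma^2$, reruns the localized Kalman--Bucy argument of Lemma~\ref{lem:risk_neutral_rational_candidate_price} on $[0,\tau]$, and gets $P_T^*=V$ from the almost-sure convergence $e_t\to0$ in Lemma~\ref{lem:risk_averse_candidate_moments}. Your remark about the random initial error $V-\mu$ is a harmless clarification; no conditioning is needed, since \eqref{eq:risk_neutral_pricing_error_representation} splits into $\Sigma(s)(V-\mu)/\Sigma_0\to0$ and a term that does not depend on $V$.
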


\begin{proof}
The risk-averse coefficients satisfy the same filtering identities $k(t)\Sigma(t)/\sigma^2=\lambda(t)$ and $\Sigma'(t)=-k(t)^2\Sigma(t)^2/\sigma^2$ used in Lemma~\ref{lem:risk_neutral_rational_candidate_price}. The localized proof of that lemma therefore applies without
modification and establishes
\eqref{eq:risk_averse_verified_rational_price}. Rational pricing implies $\mathbb{E}[\widehat{\alpha}_t^*\mid\mathcal{F}_t^{Y^*}]=k(t)\mathbb{E}[V-P_t^*\mid\mathcal{F}_t^{Y^*}]=0$. Finally, Lemma~\ref{lem:risk_averse_candidate_moments} establishes $V-P_t^*\to0$ almost surely as $t\uparrow T$, and hence $P_T^*=V$ almost surely.
\end{proof}

It remains to verify the local equilibrium condition. The next proposition combines the exponential and mean--variance variations under a bounded constant pasting deviation.

\begin{proposition}
\label{prop:risk_averse_deviation_verification}
For every $t<T$, every state $(e,w)$, and every bounded constant action $a$,
\begin{equation}
    \lim_{\varepsilon\downarrow0}
    \frac{
        J(t,e,w;a\otimes_{t,\eps}\widehat\alpha^* )
        -
        J(t,e,w;\widehat{\alpha}^*)
    }{\varepsilon}
    =0.
    \label{eq:risk_averse_verified_deviation_condition}
\end{equation}
Consequently, $\widehat{\alpha}^*$ is a weak equilibrium strategy.
\end{proposition}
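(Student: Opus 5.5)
We follow the proof of Proposition~\ref{prop:risk_neutral_deviation_verification}, adding the exponential functional $\Phi$. Write $(e^a_s,W^a_s)$ for the state driven by the constant action $a$ on $[t,t+\varepsilon_0]$, with $t+\varepsilon_0<T$. Proposition~\ref{prop:risk_averse_admissibility} makes $a\otimes_{t,\varepsilon}\widehat\alpha^*$ admissible. Because the candidate is followed after $t+\varepsilon$, the Markov property together with Lemmas~\ref{lem:risk_averse_entropic_continuation} and \ref{lem:risk_averse_continuation_moments}, applied from the random initial state $(t+\varepsilon,e^a_{t+\varepsilon},W^a_{t+\varepsilon})$, yields
\[
\mathbb{E}_{t,e,w}\bigl[e^{-\rho W_T^{a\otimes_{t,\varepsilon}\widehat\alpha^*}}\bigr]=\mathbb{E}_{t,e,w}\bigl[\Phi(t+\varepsilon,e^a_{t+\varepsilon},W^a_{t+\varepsilon})\bigr].
\]
The same argument gives the first and second moments as $\mathbb{E}[g(\cdot)]$ and $\mathbb{E}[\widetilde Q(\cdot)]$, where $\widetilde Q:=g^2+\frac{2}{\eta}(w+h-U)$ is the function $Q$ of \eqref{eq:risk_averse_second_moment_Q}. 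By Lemma~\ref{lem:risk_averse_continuation_moments}, $\widetilde Q$ represents the conditional second moment.

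Define $\Delta_\Phi,\Delta_g,\Delta_Q$ as the increments of these expectations over their values at $(t,e,w)$. The deviation criterion can then be written as
\[
J^\varepsilon=-\frac1\rho\log\bigl(\Phi(t,e,w)+\Delta_\Phi\bigr)-\frac\eta2\Bigl(Q+\Delta_Q-(g+\Delta_g)^2\Bigr).
\]
Expanding the logarithm and the square gives
\[
J^\varepsilon-U=-\frac{\Delta_\Phi}{\rho\Phi}-\frac\eta2\Delta_Q+\eta g\Delta_g+O(\Delta_\Phi^2+\Delta_g^2).
\]
Over $[t,t+\varepsilon_0]$ the processes $e^a$ and $W^a$ are Gaussian, and $W^a$ has exponential moments of every order. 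The functions $\Phi,g,Q$ and their derivatives are bounded by polynomials in $(e,w)$ times $e^{-\rho w}$ on this compact interval, and $h\ge0$. Hence It\^o's formula gives $\Delta_\bullet=\mathbb{E}\int_t^{t+\varepsilon}\mathcal{L}^a\bullet\,ds$, with true martingale parts and uniformly integrable generators. Continuity then gives $\Delta_\bullet/\varepsilon\to\mathcal{L}^a\bullet(t,e,w)$, and in particular the quadratic remainders are $o(\varepsilon)$.

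Next compute the limit. We have $-\mathcal{L}^a\Phi/(\rho\Phi)=\mathcal{L}^a(w+h)-\frac\rho2\sigma^2\lambda^2h_e^2$ and $\mathcal{L}^a(g^2)-2g\mathcal{L}^ag=\sigma^2\lambda^2m_e^2$. Substituting these shows that the limit equals
\[
\mathcal{L}^aU-\frac\rho2\sigma^2\lambda^2h_e^2-\frac\eta2\sigma^2\lambda^2m_e^2=u_t+\frac12\sigma^2\lambda^2u_{ee}-\frac\rho2\sigma^2\lambda^2h_e^2-\frac\eta2\sigma^2\lambda^2m_e^2+a\bigl(e-\lambda u_e\bigr).
\]
By \eqref{eq:reduced_extended_hjb}, with $u_e=e/\lambda$, this expression vanishes for every $a$. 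Since $J(t,e,w;\widehat\alpha^*)=U$ by Lemma~\ref{lem:risk_averse_continuation_moments}, the proposition follows.

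The main obstacle is the uniform integrability of $\mathcal{L}^a\Phi$ along the deviated path, because $W^a$ can now be negative. We handle it with the Gaussian bound $\mathbb{E}\sup_{s\le t+\varepsilon_0}e^{-p\rho W^a_s}<\infty$: $W^a_s-w=a\int_t^se^a_r\,dr$ is dominated by $|a|\varepsilon_0\sup|e^a|$, which has Gaussian tails. Hölder's inequality then combines this with the polynomial moments.
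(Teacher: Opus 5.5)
Your proposal is correct and follows essentially the same route as the paper. You represent the three conditional functionals of the deviated terminal wealth through $\Phi$, $g$, $Q$ evaluated at $(t+\varepsilon,e^a_{t+\varepsilon},W^a_{t+\varepsilon})$, get $\Delta_f/\varepsilon\to\mathcal{L}^af(t,e,w)$ from It\^o's formula with Gaussian moment and uniform-integrability bounds, expand the logarithm and the square, and reduce the first variation to the extended HJB expression, which vanishes because $u_e=e/\lambda$. The only difference is cosmetic: you bound the exponential moments of $W^a$ via the Gaussian tails of $\sup|e^a|$, whereas the paper uses the pointwise Gaussianity of $W^a_s$ (with uniformly bounded mean and variance) together with Cauchy--Schwarz.
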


\begin{proof}
%Let $a\otimes_{t,\eps}\widehat\alpha^*  $ coincide with $a$ on $[t,t+\varepsilon)$ and with $\widehat{\alpha}^*$ thereafter, and
Denote the state at $t+\varepsilon$ by
$(e_{t+\varepsilon}^a,W_{t+\varepsilon}^a)$.
Proposition~\ref{prop:risk_averse_admissibility} ensures that
this strategy is admissible for sufficiently small
$\varepsilon>0$. Since the candidate strategy is followed after
$t+\varepsilon$,
Lemmas~\ref{lem:risk_averse_entropic_continuation} and
\ref{lem:risk_averse_continuation_moments} imply
\[
    \begin{aligned}
        \mathbb{E}_{t,e,w}
        \left[
            e^{-\rho W_T^{a\otimes_{t,\eps}\widehat\alpha^*  }}
        \right]
        &=
        \mathbb{E}_{t,e,w}
        \left[
            \Phi(t+\varepsilon,e_{t+\varepsilon}^a,
            W_{t+\varepsilon}^a)
        \right],\\
        \mathbb{E}_{t,e,w}
        \left[
            W_T^{a\otimes_{t,\eps}\widehat\alpha^*  }
        \right]
        &=
        \mathbb{E}_{t,e,w}
        \left[
            g(t+\varepsilon,e_{t+\varepsilon}^a,
            W_{t+\varepsilon}^a)
        \right],\\
        \mathbb{E}_{t,e,w}
        \left[
            \bigl(W_T^{a\otimes_{t,\eps}\widehat\alpha^*  }\bigr)^2
        \right]
        &=
        \mathbb{E}_{t,e,w}
        \left[
            Q(t+\varepsilon,e_{t+\varepsilon}^a,
            W_{t+\varepsilon}^a)
        \right].
    \end{aligned}
\]
Let $\Delta_\Phi(\varepsilon)$, $\Delta_g(\varepsilon)$, and
$\Delta_Q(\varepsilon)$ denote the differences between the three
expectations above and $\Phi(t,e,w)$, $g(t,e,w)$, and $Q(t,e,w)$,
respectively.

Fix $\varepsilon_0>0$ such that $t+\varepsilon_0<T$, and let
$(e_s^a,W_s^a)_{t\leq s\leq t+\varepsilon_0}$ denote the state
process generated by the constant action $a$ on this interval. For
every $0<\varepsilon\leq\varepsilon_0$, this process agrees with
the state under $a\otimes_{t,\eps}\widehat\alpha^*  $ on
$[t,t+\varepsilon]$. Moreover,
\[
    e_s^a
    =
    e-a\int_t^s\lambda(r)\,\mathrm{d}r
    -\sigma\int_t^s\lambda(r)\,\mathrm{d}B_r,
    \qquad
    W_s^a
    =
    w+a\int_t^s e_r^a\,\mathrm{d}r,
    \qquad
    t\leq s\leq t+\varepsilon_0.
\]
The explicit representations above and the boundedness of
$\lambda$ on $[t,t+\varepsilon_0]$ imply that $e_s^a$ and
$W_s^a$ are Gaussian for each $s$ in this interval, with their
means and variances uniformly bounded in $s$. Consequently, the
polynomial moments of $e_s^a$ and the exponential moments of
$W_s^a$ are uniformly bounded. For every $q\geq1$ and
$c\in\mathbb{R}$, the Cauchy--Schwarz inequality therefore yields
\[
    \mathbb{E}_{t,e,w}\bigl[(1+|e_s^a|^q)e^{cW_s^a}\bigr]
    \leq
    \bigl(\mathbb{E}_{t,e,w}[(1+|e_s^a|^q)^2]\bigr)^{1/2}
    \bigl(\mathbb{E}_{t,e,w}[e^{2cW_s^a}]\bigr)^{1/2},
\]
and hence
\begin{equation}
    \sup_{t\leq s\leq t+\varepsilon_0}
    \mathbb{E}_{t,e,w}
    \left[
        (1+|e_s^a|^q)e^{cW_s^a}
    \right]
    <\infty.
    \label{eq:risk_averse_deviation_mixed_moment_bound}
\end{equation}

Since $h(s,e)=\Gamma(s)e^2+E(s)$ with $\Gamma,E\geq0$, and the
coefficients of $h$ and their derivatives are bounded on
$[t,t+\varepsilon_0]$, there exists $C>0$ such that
\[
    \left|
        \mathcal{L}^a\Phi(s,e,w)
    \right|
    \leq
    C(1+|e|^2)e^{-\rho w},
    \qquad
    \left|
        \sigma\lambda(s)\Phi_e(s,e,w)
    \right|^2
    \leq
    C|e|^2e^{-2\rho w}.
\]
Taking $q=2$ and $c=-2\rho$ in
\eqref{eq:risk_averse_deviation_mixed_moment_bound} gives
\[
    \mathbb{E}_{t,e,w}\!\left[\int_t^{t+\varepsilon_0}
    |\sigma\lambda(s)\Phi_e(s,e_s^a,W_s^a)|^2\,\mathrm{d}s\right]
    \leq C\varepsilon_0
    \sup_{t\leq s\leq t+\varepsilon_0}
    \mathbb{E}_{t,e,w}\!\left[|e_s^a|^2e^{-2\rho W_s^a}\right]
    <\infty.
\]
Thus, the stochastic integral in the It\^o decomposition of
$\Phi(s,e_s^a,W_s^a)$ is a square-integrable martingale.

Moreover, for any $p>1$,
\[
    \left|
        \mathcal{L}^a\Phi(s,e_s^a,W_s^a)
    \right|^p
    \leq
    C(1+|e_s^a|^{2p})e^{-p\rho W_s^a}.
\]
Taking $q=2p$ and $c=-p\rho$ in
\eqref{eq:risk_averse_deviation_mixed_moment_bound} shows that
\[
    \sup_{t\leq s\leq t+\varepsilon_0}
    \mathbb{E}_{t,e,w}
    \left[
        \left|
            \mathcal{L}^a\Phi(s,e_s^a,W_s^a)
        \right|^p
    \right]
    <\infty.
\]
Hence,
$\{\mathcal{L}^a\Phi(s,e_s^a,W_s^a):
t\leq s\leq t+\varepsilon_0\}$ is uniformly integrable.

The functions $g$, $Q$, and their derivatives have polynomial
growth in $(e,w)$, uniformly for
$s\in[t,t+\varepsilon_0]$. The uniform Gaussian moment bounds
therefore imply that the stochastic integrals in their It\^o
decompositions are square-integrable martingales and that
$\{\mathcal{L}^ag(s,e_s^a,W_s^a)\}$ and
$\{\mathcal{L}^aQ(s,e_s^a,W_s^a)\}$ are uniformly integrable.

It\^o's formula and the martingale property of the stochastic
integrals now yield, for $f\in\{\Phi,g,Q\}$,
\[
    \Delta_f(\varepsilon)
    =
    \int_t^{t+\varepsilon}
    \mathbb{E}_{t,e,w}
    \left[
        \mathcal{L}^af(s,e_s^a,W_s^a)
    \right]
    \,\mathrm{d}s.
\]
Since $(e_s^a,W_s^a)\to(e,w)$ almost surely as $s\downarrow t$ and
the generators are continuous,
\[
    \mathcal{L}^af(s,e_s^a,W_s^a)
    \to 
    \mathcal{L}^af(t,e,w)
    \qquad\text{almost surely}.
\]
The uniform integrability established above implies that this
convergence also holds in $L^1$. Consequently,
\[
    \left|\frac{\Delta_f(\varepsilon)}{\varepsilon}-\mathcal{L}^af(t,e,w)\right|
    \leq\frac{1}{\varepsilon}\int_t^{t+\varepsilon}
    \mathbb{E}_{t,e,w}\!\left[
    \left|\mathcal{L}^af(s,e_s^a,W_s^a)-\mathcal{L}^af(t,e,w)\right|
    \right]\mathrm{d}s
    \to 0.
\]
It follows that
\[
    \frac{\Delta_\Phi(\varepsilon)}{\varepsilon}
    \to 
    \mathcal{L}^a\Phi(t,e,w),
    \qquad
    \frac{\Delta_g(\varepsilon)}{\varepsilon}
    \to 
    \mathcal{L}^ag(t,e,w),
    \qquad
    \frac{\Delta_Q(\varepsilon)}{\varepsilon}
    \to 
    \mathcal{L}^aQ(t,e,w).
\]
In particular, $\Delta_\Phi(\varepsilon)=O(\varepsilon)$,
$\Delta_g(\varepsilon)=O(\varepsilon)$, and
$\Delta_Q(\varepsilon)=O(\varepsilon)$ as $\varepsilon\downarrow0$. Using \eqref{eq:risk_averse_verification_U}, the identity $U=-\rho^{-1}\log\Phi-\frac{\eta}{2}(Q-g^2)$, and $\log(1+x)=x+o(x)$, we obtain
\begin{equation}
    \begin{aligned}
        &\lim_{\varepsilon\downarrow0}
        \frac{
            J(t,e,w;a\otimes_{t,\eps}\widehat\alpha^*  )
            -U(t,e,w)
        }{\varepsilon}\\
        &\qquad=
        -\frac{\mathcal{L}^a\Phi(t,e,w)}{\rho\Phi(t,e,w)}
        -
        \frac{\eta}{2}
        \left(
            \mathcal{L}^aQ(t,e,w)
            -2g(t,e,w)\mathcal{L}^ag(t,e,w)
        \right).
    \end{aligned}
    \label{eq:risk_averse_deviation_first_variation}
\end{equation}
From \eqref{eq:risk_averse_verification_functions} and \eqref{eq:risk_averse_second_moment_Q}, we have
$$
\begin{aligned}
-\frac{\mathcal{L}^a\Phi}{\rho\Phi}
=&\mathcal{L}^a(w+h)-\frac{\rho}{2}\sigma^2\lambda(t)^2h_e^2\\
\mathcal{L}^aQ-2g\mathcal{L}^ag
=&\sigma^2\lambda(t)^2m_e^2
+\frac{2}{\eta} \left(\mathcal{L}^a(w+h)-\mathcal{L}^aU\right).
\end{aligned}
$$
Substituting these identities into
\eqref{eq:risk_averse_deviation_first_variation} yields
\[
    u_t
    +
    \frac{1}{2}\sigma^2\lambda(t)^2u_{ee}
    -
    \frac{\rho}{2}\sigma^2\lambda(t)^2h_e^2
    -
    \frac{\eta}{2}\sigma^2\lambda(t)^2m_e^2
    +
    a\bigl(e-\lambda(t)u_e\bigr).
\]
The reduced extended HJB system
\eqref{eq:reduced_extended_hjb} shows that this expression is zero
for every bounded constant action $a$. 
Lemma~\ref{lem:risk_averse_continuation_moments} also implies $U(t,e,w)=J(t,e,w;\widehat{\alpha}^*)$. This proves \eqref{eq:risk_averse_verified_deviation_condition}, and the weak equilibrium inequality follows immediately.
\end{proof}

\begin{proof}[Proof of Theorem~\ref{thm:risk_averse_main_equilibrium}]
Lemmas~\ref{lem:risk_averse_entropic_continuation} and \ref{lem:risk_averse_continuation_moments} establish the three identities in \eqref{eq:risk_averse_value_representation}. Proposition~\ref{prop:risk_averse_deviation_verification} verifies the informed trader's weak equilibrium condition, while Lemma~\ref{lem:risk_averse_rational_candidate_price} establishes rational pricing, inconspicuous trading, and full revelation. Therefore, $\bigl((H^*,\lambda),\widehat{\alpha}^*\bigr)$ is a weak market equilibrium.
\end{proof}

The verification completes the construction of the risk-averse Gaussian--linear equilibrium. In particular, the exponential continuation function, the first two conditional moments of terminal wealth, the local equilibrium condition, and the rational pricing condition are mutually consistent with the candidate coefficients obtained from the shooting problem.

\noindent{\bf Acknowledgements:}
Shuoqing Deng is supported by the Hong Kong University of Science and Technology Start-up Grant No. R9826 and Hong Kong RGC Early Career Scheme (ECS) Grant No. 26307125.
Zhenhua Wang is supported by Shandong Excellent Young Scientists Fund Program (Overseas) under grant No. 2025HWYQ–022 and National Natural Science Foundation of China (NSFC) under grant No.12501659.

%\appendix

%\section{Appendix}

%aaa

\end{document}